\newcommand \p {\partial}
\newcommand \R {\mathbb{R}}
\renewcommand \L {\mathrm{L}}
\newcommand \W {\mathrm{W}}
\renewcommand \H {\mathrm{H}}
\newcommand \I {\mathrm{I}}
\renewcommand \d {\mathrm{d}}
\DeclareMathOperator{\divg}{div}
\gdef\SetFigFont#1#2#3#4#5{%
	\reset@font\fontsize{#1}{#2pt}%
	\fontfamily{#3}\fontseries{#4}\fontshape{#5}%
	\selectfont}%
\begin{document}

\title{Optimal Control for a Class of Infinite Dimensional Systems Involving an $L^\infty$-term in the Cost Functional}

\author[1]{S\'ebastien Court*}

\author[2]{Karl Kunisch}

\author[1]{Laurent Pfeiffer}

\authormark{AUTHOR ONE \textsc{et al}}

\address[1]{Institute for Mathematics and Scientific Computing, Karl-Franzens-Universit\"{a}t, Heinrichstr. 36, 8010 Graz, Austria, email: {\tt sebastien.court@uni-graz.at}, {\tt laurent.pfeiffer@uni-graz.at}.}

\address[2]{Institute for Mathematics and Scientific Computing, Karl-Franzens-Universit\"{a}t, Heinrichstr. 36, 8010 Graz, Austria, and Radon Institute, Austrian Academy of Sciences, email: {\tt karl.kunisch@uni-graz.at}.}

\corres{*Corresponding author \email{sebastien.court@uni-graz.at}}


\abstract[Summary]{An optimal control problem with a time-parameter is considered. The functional to be optimized includes the maximum over time-horizon reached by a function of the state variable, and so an $L^\infty$-term. In addition to the classical control function, the time at which this maximum is reached is considered as a free parameter. The problem couples the behavior of the state and the control, with this time-parameter. A change of variable is introduced to derive first and second-order optimality conditions. This allows the implementation of a Newton method. Numerical simulations are developed, for selected ordinary differential equations and a partial differential equation, which illustrate the influence of the additional parameter and the original motivation.}

\keywords{Optimality conditions, PDE-constrained optimization, transversality conditions, hybrid optimal control problem.}

\maketitle

\noindent{\bf AMS subject classifications (2010): 49K20, 49K15, 49K21, 93C30, 90C46.}


\section{Introduction}

We consider optimal control problems with a cost functional involving a function of the state variable at some time $\tau$ during its time evolution. The time $\tau$ itself is free to move within the time-span $(0,T)$ of the whole experiment. As a very first thought, one might think of the prototype of a new vehicle and ask what is the highest reachable speed on the race track.
Our original motivation for the class of problems under consideration stems from the domain of cardiac electro-physiology. The problem of fibrillation of a part of the heart can be mechanically tackled with the use of electric shocks acting on this muscle, leading to the re-oxygenation of the ill area, and by this means forcing this area to recover a healthy electric activity.
The efficiency of the defibrillation is known to be related to the maximum reached over time by the derivative of a pressure in the heart. This quantity can be mathematically formulated as the maximum taken by a function of the state variable of the problem.
\textcolor{black}{We refer to \cite{CKP16} for more details on related optimal control problems arising in electro-cardiology.}
While we continue to work on this challenging problem, we focus in this article on the methodology enabling to derive optimality conditions for such a problem, for a simpler class of partial differential equations.

The optimal control problem that we shall investigate can be formulated as follows:
\begin{eqnarray} \label{eqPbInit}
\left\{\begin{array} {l}
\displaystyle \max_{\tau \in [0,T],\, u \in \L^2(0,T;U)} \int_0^T \ell(y(t),u(t)) \d t + \phi_1(y(\tau)) + \phi_2(y(T)),
\\
\qquad \text{subject to: } \quad
\dot{y}= f(y,u), \quad y(0)=y_0, \quad
\displaystyle \textcolor{black}{\int_0^T \| u(t) \|_U^2 \d t \leq \gamma},
\end{array} \right.
\end{eqnarray}
where the state variable $y$ is the solution of an evolution equation controlled by $u$.
In this problem, $\ell$ denotes the cost functional, $\phi_1$ the functional we want to maximize at some time $\tau$, and $\phi_2$ is the terminal cost. The analytical framework will be specified later. The specificity of this kind of problem lies in the fact that a time parameter, namely $\tau$, can be optimized. Not only do we maximize $\phi_1\circ y$ with the help of the control $u$, but we also optimize the time $\tau$ for which the maximum is reached.
Note that, when $\phi_1$ is nonnegative, the problem can be equivalently formulated in the following way:
\begin{eqnarray*}
 \left\{\begin{array} {l}
  \displaystyle
  \max_{u \in \L^2(0,T;U)} \int_0^T \ell(y(t),u(t)) \d t + \| \phi_1(y(\cdot)) \|_{\L^{\infty}(0,T;\R)} + \phi_2(y(T)), \\
  \text{subject to: } \quad \dot{y}= f(y,u), \quad y(0)= y_0,
  \quad
  \displaystyle \textcolor{black}{\int_0^T \| u(t) \|_U^2 \d t \leq \gamma}.
 \end{array} \right.
\end{eqnarray*}
In this fashion, the cost function incorporates the $\L^\infty$-norm of a given function of the state variable.

\textcolor{black}{
 With appropriate technical modifications, the results provided in this paper can be extended to the following hybrid control problem:}
\begin{eqnarray}
\left\{\begin{array} {l}
\displaystyle   \max_{
 \begin{subarray}{c} 0= \tau_0 < \tau_1 < ... < \tau_K= T \\ 
 u \in \L^2(0,T;U)
 \end{subarray}
} \ \sum_{k=1}^K \ \int_{\tau_{k-1}}^{\tau_k} \ell_k(y(t),u(t)) \d t + \phi_k(y(\tau_k)), \\
\text{subject to: }  \quad
\dot{y}(t)= f_k(y(t),u(t)), \text{ for a.\,e.\,} t \in (\tau_{k-1},\tau_k), \quad
y(0)= y_0, \quad \textcolor{black}{\displaystyle \int_0^T \| u(t) \|_U^2 \d t \leq \gamma.}
\end{array} \right. \label{eqHybrid}
\end{eqnarray}
The above problem incorporates time parameters (or switching times) $\tau_1$,...,$\tau_{K-1}$ which can be optimized. The term {\it hybrid} refers here to the fact that at the switching times $\tau_k$, the system move from a given regime (described here by the dynamics $f_k$) to another one (described by the dynamics $f_{k+1}$). At the switching times, the integral cost changes and a function of the state is also incorporated into the cost function. There are many ways to generalize problem~\eqref{eqHybrid} (see e.g.~\cite{GP05}). For example, one can consider a formulation of the problem for which the dynamics $f_k$ (used during the interval $(\tau_{k-1},\tau_k)$) can be itself chosen into a finite set of functions. Such generalizations are beyond the scope of the paper.

In the optimal control literature, many problems include time parameters that have to be optimized. Among them, time-optimal control problems have probably attracted the most the attention. These problems basically consist in minimizing the time needed to reach a given target. We refer to the early reference~\cite{HL69} on this topic. Time-optimal control problems have been studied for various models: See for example~\cite{KW13} for the wave equation,~\cite{KR15,KPR16} for the monodomain system,~\cite{Bar97} for the Navier-Stokes equations,~\cite{MRT12} for the heat equation. We also mention the time-crisis management problem studied in~\cite{BR16}; For such a problem, the time spent out of a certain closed domain must be minimized.

For our problem, the first-order optimality conditions consist of a weak maximum principle for the control variable, and a transversality condition for the optimality of the time parameter.
The derivation of the transversality condition is difficult, in so far as the Lagrangian of the problem is not differentiable with respect to the time-parameters. In e.g.~\cite{IK10,RZ99,RZ00}, a change of variables (in time) is performed to derive the transversality condition in the case of a time-optimal control problem. This is the approach that we adopt here. It also enables us to derive second-order optimality conditions, as in \cite{KPR16} and \cite{LiGaoWang} for finite-dimensional hybrid control problems. In this last reference, a numerical test based on a Riccati equation is provided to check the sufficient second-order conditions. In~\cite{GP05}, specific needle variations are designed, for hybrid systems. Other approaches can also be considered. In~\cite{BR16}, the time-crisis management problem is regularized and optimality conditions are derived by $\Gamma$-convergence.
\textcolor{black}{One of the contributions of the present paper is that the concept of reparameterization for the class of hybrid systems speficifed in \eqref{eqPbInit} are systematically investigated in the infinite-dimensional setting, thus allowing for applications in the context of PDEs.}

The paper is organized as follows: In section~\ref{sec2} the functional framework is specified, and the problem is transformed with a change of variables. Section~\ref{sec3} is devoted to the derivation of first and second-order optimality conditions. The abstract framework proposed here is shown to be satisfied, as an example, by the Navier-Stokes equations in dimension 2. The issue of numerical resolution is addressed in section~\ref{sec4}: We use the theoretical expressions of the optimality condition in order to design an algorithm which solves the problem, mixing Barzilai-Borwein gradient steps with Newton steps. As illustrations, we consider two examples of ordinary differential systems and one example dealing with the Burgers equation.

\paragraph{Notation.}
When there is no \textcolor{black}{ambiguity}, the time variable is sometimes omitted, or written only once (e.g. $H(y,u,p)(s)$, instead of $H(y(s),u(s),p(s))$. First and second-order partial derivatives are denoted with the use of indexes. The first and second-order derivatives with respect to all variables \textcolor{black}{(except the adjoint state $p$ and the Lagrange multiplier $\lambda$, for the Hamiltonian and the Lagrangians)} are denoted by $D$ and $D^2$, respectively. \textcolor{black}{The partial derivative of a function $\varphi$ with respect to a variable $y$ in a direction $z$ is denoted as $\varphi_y(y).z$}.
When a function $h$ is left- and right-continuous at a given time $t$, the left- and right-limits are denoted by $h(t^-)$ and $h(t^+)$, respectively, and the jump is denoted by $[h]_t$.

\section{Formulation of the problem} \label{sec2}

\subsection{Setting} \label{secset}

Let $X$ be a real Hilbert space, and $Y$ be a reflexive Banach space forming with $X$ a Gelfand triple $Y \subseteq X \equiv X' \subseteq Y'$, with $Y$ densely contained in $X$. We denote $Z = Y'$, so that $Z'$ is isomorphic to $Y$. Let $U$ be the Hilbert space of controls. Further let $\ell:X\times U \rightarrow \R$ and $f:Y\times U \rightarrow Y'$ be two twice continuously Fr{\'e}chet-differentiable mappings. For $\tilde{u} \in \L^2(0,T;U)$, the state is governed by the autonomous control system
\begin{eqnarray} \label{mainsys0}
\left\{ \begin{array} {l}
\dot{\tilde{y}} = f(\tilde{y},\tilde{u}) \quad \text{on } (0,T),\\
\tilde{y}(0) = y_0,
\end{array} \right.
\end{eqnarray}
where the initial condition $y_0 \in X$ is given. We assume that for all $y_0 \in X$ and $\tilde{u}\in \L^2(0,T;U)$ this system has a unique solution $\tilde{y}$ in the space:
\begin{eqnarray*}
 W(0,T;Y) := \L^2(0,T;Y) \cap \W^{1,2}(0,T;Y').
\end{eqnarray*}
Recall the continuous embedding $W(0,T;Y) \hookrightarrow \mathcal{C}([0,T];X)$, so that in particular the initial condition makes sense in $X$. \textcolor{black}{Weak solutions for systems of type~\eqref{mainsys0} will be understood in $W(0,T;Y)$ in the variational sense, that is to say for all $\varphi \in \L^2(0,T;Y)$ they must satisfy
 \begin{eqnarray*}
  \left\langle \dot{\tilde{y}}(t) - f(\tilde{y}(t),\tilde{u}(t)) ; \varphi(t) \right\rangle_{Y';Y}  =  0
 \end{eqnarray*}
 for almost every $t\in (0,T)$, and $\tilde{y}(0) = y_0$ in $X$.} Let $\phi_1$ and $\phi_2: X \rightarrow \R$ be two twice continuously Fr\'echet-differentiable mappings.
We consider the following problem:
\begin{eqnarray}
& & \left\{ \begin{array} {l}
\displaystyle \max_{\tau\in[0,T], \ \tilde{u} \in \L^2(0,T;U)} \ \int_0^T \ell(\tilde{y}(t),\tilde{u}(t)) \d t + \phi_1(\tilde{y}(\tau)) + \phi_2(\tilde{y}(T)) \\[5pt]
\displaystyle \text{subject to: }\quad
\dot{\tilde{y}} = f(\tilde{y},\tilde{u}) \text{ in } Y',\quad \tilde{y}(0) = y_0 \text{ in } X,
\quad \displaystyle \text{\textcolor{black}{$\int^T_0 \|\tilde{u}(t)\|_U^2 \, dt \leq \gamma$}},
\end{array} \right. \label{pbmain}
\end{eqnarray}
\textcolor{black}{where $\gamma\in (0,\infty)$. The operators $f$ and $\ell$ are assumed to generate Nemytskii operators on $W(0,T;Y) \times \L^2(0,T;U)$, whose classical properties can be found in~\cite{Troltzsch}, section~4.3, for instance. For a sake of compact notation, we keep the same notation for these Nemytskii operators so defined, and will consider in particular the relations $f(\tilde{y},\tilde{u})(t) = f(\tilde{y}(t),\tilde{u}(t))$ and $\ell(\tilde{y},\tilde{u})(t) = \ell(\tilde{y}(t),\tilde{u}(t))$.} Throughout the paper, we consider the following set of assumptions on $f$ and $\ell$.
\begin{description}
 \item[$\mathbf{A0}$] The mapping $\ell :  W(0,T;Y) \times \L^2(0,T;U) \rightarrow  \L^1(0,T;\R)$ is twice continuously Fr\'echet-differentiable.
 \item[$\mathbf{A1}$] The mapping $f : W(0,T;Y) \times \L^2(0,T;U) \rightarrow \L^2(0,T;Y')$
 is twice continuously Fr\'echet-differentiable.
 \item[$\mathbf{A2}$] For all $\tilde{u} \in \L^2(0,T;U)$, $y_0 \in X$, system~\eqref{mainsys0} admits a unique weak solution in $W(0,T;Y)$.
 \item[$\mathbf{A3}$]For all $(\tilde{y},\tilde{u}) \in W(0,T;Y) \times \L^2(0,T;U)$, $\xi \in \L^2(0,T;Y')$, $z_0\in X$, there exists a unique $z \in W(0,T;Y) $ solution to the following system
 \begin{eqnarray*}
  \left\{ \begin{array} {l}
   \dot{z} = f_{\tilde{y}}(\tilde{y},\tilde{u}).z + \xi \quad \text{on } (0,T), \\
   z(0) = z_0.
  \end{array} \right.
 \end{eqnarray*}
\end{description}

\textcolor{black}{We included the constraint $\|\tilde{u}\|_{\L^2(0,T;U)}^2 \leq \gamma$ on the energy of the control in the problem formulation~\eqref{pbmain}. While existence of solutions is not the focus of this paper, this constraint allows to argue existence in concrete cases, as illustrated in section~\ref{subsecexample}. In a follow-up work it can be of interest to include different constraints on the controls and/or the state variable, and to analyze their effect on the switching time $\tau$.}

\subsection{Comments on the functional framework}

\subsubsection{Example} \label{subsecexample}

The variational framework $W(0,T;Y)$ is applicable, for instance, to the Burgers system (see section~\ref{secnumPDE}), or to the unsteady Navier-Stokes system (see~\cite{HinzeKK01, HinzeKK}). Assumptions $\mathbf{A1}$--$\mathbf{A3}$ can be easily verified for the Burgers system, if we refer to the analysis of e.g.~\cite{Volkwein}.\\
Let us detail the verification for the Navier-Stokes system and also  formulate the problem in such a manner that existence of a solution can be guaranteed.
\begin{eqnarray*}
 \dot{y} + (y\cdot \nabla) y - \nu \Delta y + \nabla p  =  Bu & \quad & \text{in } \Omega \times (0,T), \\
 \divg y  =  0 & \quad & \text{in } \Omega \times (0,T), \\
 y  =  0 & \quad & \text{on } \p \Omega\times (0,T), \\
 y(0)  =  y_0 & \quad & \text{in } \Omega .
\end{eqnarray*}
In the system above, $\Omega$ is a bounded domain of $\R^2$, with smooth boundary, and $B\in \mathcal{L}(U,X)$ is a linear operator, \textcolor{black}{as for instance $B = \mathbf{1}_{\omega}$ where $\omega \subset \Omega$, with $U = [\L^2(\omega)]^2$,
 and $\mathbf{1}_{\omega}$ denotes the extension-by-zero operator, i.e. $\mathbf{1}_{\omega}= u(x)$ for $x\in \omega$ and $(Bu)(x)= 0$ otherwise. We consider the solenoidal spaces
 \begin{eqnarray*}
  Y = \left\{ y \in [\H^1_0(\Omega)]^2\mid \ \divg y = 0\right\},
  \quad X = \left\{ y \in [\L^2(\Omega)]^2\mid \ \divg y = 0, \ y\cdot \nu=0 \text{ on } \p \Omega\right\},
  \quad Y' \supset [\H^{-1}(\Omega)]^2,
 \end{eqnarray*}
 where $\nu$ is the outer normal to $\p \Omega$, and $y_0 \in Y$. It is well-known that $X$ is a closed subspace of $[\L^2(\Omega)]^2$, \cite{Temam}. Now we are prepared to state the variational formulation of the state equation, given $u \in \L^2(0,T;U)$, in which the pressure $p$ disappears:
 \begin{eqnarray}
 &  & \text{Find  $y  \in W(0,T;Y)$ such that for all $\varphi \in \L^2(0,T;Y)$ we have almost everywhere in $(0,T)$:} \nonumber \\
 &   & \left\{\begin{array}l
 \langle \dot{y} + (y\cdot \nabla)y; \varphi \rangle_{Y';Y}
 + \nu \langle \nabla y ; \nabla \varphi \rangle_{[\L^2(\Omega)]^2} = \langle B(u) ; \varphi \rangle_{Y';Y},\\
 \langle y(0);\varphi\rangle_X = \langle y_0;\varphi\rangle_X.
 \end{array} \right. \label{eq:kk20}
 \end{eqnarray}
 For details,
 we refer to \cite[Chapter III.3.5]{Temam}. For our example, in the context of the general framework above, we have $f(y,u) = \nu \Delta y - (y\cdot \nabla) y + P_X B u$, where $ P_X\in \mathcal{L }([\L^2(\Omega)]^2,X)$ denotes the orthogonal projection onto $X$. To be specific, we further set for $(z,v) \in X \times U$
 \begin{equation}\label{eq:kk21}
 \ell(z,v)= -\|v\|^2_U, \quad \phi_1(z) = \phi_2(z)= \|z\|^2_{\L^2(\Omega)}.
 \end{equation}
 Let us verify $\mathbf{A0}$--$\mathbf{A4}$. Assumption $\mathbf{A0}$ is clearly satisfied. Concerning~$\mathbf{A1}$,} short considerations show that first- and second-order directional differentiability and continuity of the derivatives (and hence Fr\'echet differentiability) of $f$ will follow from the continuity of the Oseen--type term $(z_1, z_2, z_3) \mapsto (z_1 \cdot \nabla)z_2 + (z_3 \cdot \nabla)z_1$ from  $[W(0,T;Y)]^3$ to $\L^2(0,T;Y')$, which will be given below. Since $\divg z_1 = \divg z_2 = \divg z_3 = 0$, we have
\begin{eqnarray*}
 (z_1 \cdot \nabla)z_2 + (z_3 \cdot \nabla)z_1 & = & \divg ( z_2 \otimes z_1 + z_1 \otimes z_3), \\
 \| (z_1 \cdot \nabla)z_2 + (z_3 \cdot \nabla)z_1 \|_{Y'} & \leq &
 \| (z_1 \cdot \nabla)z_2 + (z_3 \cdot \nabla)z_1 \|_{[\H^{-1}(\Omega)]^2} \\
 & \leq & \| z_1 \otimes z_2 + z_3 \otimes z_1 \|_{[\L^{2}(\Omega)]^2}\leq
 \| z_1 \otimes z_2\|_{[\L^{2}(\Omega)]^4} + \|z_3 \otimes z_1 \|_{[\L^{2}(\Omega)]^4}.
\end{eqnarray*}
In view of the symmetric role played by $z_1$, $z_2$ and $z_3$, we only estimate the first term. From~\cite{Grubb} (Appendix B, Proposition B.1), in dimension $2$ there exists a constant $C>0$ independent of $z_1$ and $z_2$ such that
\begin{eqnarray*}
 \| z_1 \otimes z_2\|_{[\L^{2}(\Omega)]^4} & \leq & C\| z_1\|_{[\H^{1/2}(\Omega)]^2}
 \| z_2\|_{[\H^{1/2}(\Omega)]^2}.
\end{eqnarray*}
By interpolation we deduce
\begin{eqnarray*}
 \| z_1 \otimes z_2\|_{[\L^{2}(\Omega)]^4} & \leq &
 C\| z_1\|^{1/2}_{[\L^{2}(\Omega)]^2} \| z_1\|^{1/2}_{[\H^{1}(\Omega)]^2}
 \| z_2\|^{1/2}_{[\L^{2}(\Omega)]^2}\| z_2\|^{1/2}_{[\H^{1}(\Omega)]^2}.
\end{eqnarray*}
By integrating in time, this yields
\begin{eqnarray*}
 \| z_1 \otimes z_2\|_{\L^2(0,T;[\L^{2}(\Omega)]^4)} & \leq &
 C\| z_1\|^{1/2}_{\L^{\infty}(0,T;X)}\| z_2\|^{1/2}_{\L^{\infty}(0,T;X)}
 \| z_1\|^{1/2}_{\L^{2}(0,T;Y)}
 \| z_2\|^{1/2}_{\L^{2}(0,T;Y)} \\
 & \leq & \tilde{C}\| z_1\|_{W(0,T;Y)}\| z_2\|_{W(0,T;Y)},
\end{eqnarray*}
for some constant $\tilde{C} >0$. Thus $\mathbf{A1}$ is satisfied. Assumptions~$\mathbf{A2}$ and~$\mathbf{A3}$ are due to~\cite{HinzeKK01} (Proposition~2.1 page 928, and Proposition~2.4 page 929, respectively).\\

\textcolor{black}{Now we turn to the question of existence for problem~\eqref{pbmain}. For this purpose we consider problem~\eqref{pbmain} with the specifications~\eqref{eq:kk20} and~\eqref{eq:kk21}. Let $\{(\tau_n, u_n)\} \in [0,\tau]\times \L^2(0,T;U)$ denote a maximizing sequence. Since $\|u_n\|_{\L^2(0,T;U)}^2 \leq\gamma $ for all $n$, there exists a subsequence of $\{(\tau_n, u_n)\}$, that for simplicity we denote by the same symbol, and $(\bar \tau, \bar u) \in  [0,\tau]\times \L^2(0,T;U)$ such that $\{(\tau_n, u_n)\}$  converges weakly to  $(\bar \tau, \bar u)$. Since $B\in \mathcal{L}(U,X)$, it follows that $\{ B u_n\}$ is bounded  in $X$.
 By assuming that $y_0\in Y$, it follows that $\{y(u_n)\}$ is a bounded sequence in $W(0,T;Y) \cap \L^2(0,T; [\H^2(\Omega)]^2) \cap \H^1(0,T;X)$, see \cite[Theorem III. 3.10]{Temam} and \cite[Proposition 2.3]{HinzeKK01}. It is now standard to pass to the limit in \eqref{eq:kk20} with $u$ replaced by $u_n$ and to find that $y(u_n)$ converges at least weakly in $W(0,T;Y)$ to the solution $y(\bar u)$ associated with the control $\bar u$. The extra boundedness of $\{y(u_n)\}$ in $\L^2(0,T; [\H^2(\Omega)]^2) \cap \H^1(0,T;X)$ is needed to pass to the $\limsup$ in the functional
 \begin{equation}\label{eq:kk22}
 -\int_0^T \|u_n(t)\|^2_U \, dt + \|y(\tau_n;u_n)\|^2_{[\L^2(\Omega)]^2}   + \|y(T;u_n)\|^2_{[\L^2(\Omega)]^2}.
 \end{equation}
 From \cite[Corollary 8]{Simon}, the embedding of $\L^2(0,T; [\H^2(\Omega)]^2) \cap \H^1(0,T;X)$ in $C([0,T]; [[\H^2(\Omega)]^2,X]_{1-\epsilon})$ is relatively compact, where $\epsilon \in (0,\frac{1}{2} )$, and hence relatively compactly into $C([0,T];[\L^2(\Omega)]^2)$. Here $[[\H^2(\Omega)]^2,X]_{1-\epsilon}$ denotes the interpolation space with index $1-\epsilon$, which is well-defined since the Stokes operator associated with~\eqref{eq:kk20} in the space $X$ generates an analytic semi-group, see e.g. \cite[Theorem 2]{Giga}. In particular, possibly after taking another subsequence, we have that
 $y(u_n)  \to y(\bar u)$ in $C([0,T]; [\L^2(\Omega)]^2)$, see \cite[Lemma 1]{Simon}. Now we have the estimate
 \begin{eqnarray*}
  \|y(\tau_n;u_n)-  y(\bar \tau;\bar u)\|_{[\L^2(\Omega)]^2} & \le &
  \|y(\tau_n;u_n)- y(\tau_n;\bar u)\|_{[\L^2(\Omega)]^2} +
  \|y(\tau_n;\bar u)-  y(\bar \tau;\bar u)\|_{[\L^2(\Omega)]^2}\\
  & \le & \|y(\cdot;u_n)- y(\cdot;\bar u)\|_{C(0,T;[\L^2(\Omega)]^2)} +
  \|y(\tau_n;\bar u)-  y(\bar \tau;\bar u)\|_{[\L^2(\Omega)]^2}.
 \end{eqnarray*}
 Analogously we obtain that $ y(T;u_n) \to y(T;\bar u)$ in $[\L^2(\Omega)]^2$. Since $(\tau_n, u_n))$ was taken as a maximizing sequence, we can take the $\limsup$ in~\eqref{eq:kk22} and obtain
 \begin{equation*}
 -\int_0^T \|u(t)\|^2_U \, dt + \|y(\tau;u)\|^2_{[\L^2(\Omega)]^2}   + \|y(T;u)\|^2_{[\L^2(\Omega)]^2} \le -\int_0^T \|\bar u(t)\|^2_U \, dt +
 \|y(\bar\tau; \bar u)\|^2_{[\L^2(\Omega)]^2}   + \|y(T;\bar u)\|^2_{[\L^2(\Omega)]^2}
 \end{equation*}
 for all $u\in \L^2(0,T; U)$ with $\|u\|_{\L^2(0,T; U)}^2 \le \gamma$ and $\tau\in [0,T]$, and thus existence of an optimal control is established.
}


\subsubsection{Additional regularity}
In other partial differential equations examples, the function space framework $W(0,T;Y)$ can be too restrictive. This is due to the fact that, in the linearized state equation given in assumption~$\mathbf{A3}$, the term $f_y(\bar{y},\bar{u})$ appears as a coefficient. Hence the solution of this linearized system may not be well-defined, unless $f$ satisfies appropriate growth bounds. For this purpose an alternative functional framework can be appropriate. For instance, let $\hat{Y} \subset Y \subset X$ be three Hilbert spaces endowed with a chain of continuous embeddings. Duality is understood with respect to $X \equiv X'$. We set
\begin{eqnarray*}
 \hat{W}(0,T;\hat{Y}) & = & \{ y\in \L^2(0,T;\hat{Y}) \mid \ \dot{y} \in \L^2(0,T;X) \}.
\end{eqnarray*}
We still have $\hat{W}(0,T;\hat{Y}) \hookrightarrow C([0,T];X)$, but in general we do not have $\hat{W}(0,T;\hat{Y}) \hookrightarrow C([0,T];Y)$, except for instance when we have the interpolation $Y = [X;\hat{Y}]_{1/2}$. When we do not have this embedding, we can add $y\in \mathcal{C}([0,T];Y)$ into the definition of $\hat{W}(0,T;\hat{Y})$ above. In applications, for a given smooth domain $\Omega$, we may think of $X = \L^2(\Omega)$, and $Y$, $\hat{Y}$ subspaces of $\H^1(\Omega)$, $\H^2(\Omega)$, respectively. The mapping $e$ introduced in the proof of Lemma~\ref{lemma_diffSys} is then modified to be
\begin{eqnarray*}
 \begin{array} {rccl}
  e: &  \hat{W}(0,2;\hat{Y})  \times \L^2(0,2;U) \times \R & \rightarrow & \L^2(0,2;X)\times Y.
 \end{array}
\end{eqnarray*}
The assumptions utilized in this proof are implied by~$\mathbf{A1}$--$\mathbf{A3}$. In order to transpose the assumptions~$\mathbf{A1}$--$\mathbf{A3}$ and to derive differentiability in the framework considered here, we introduce the mapping $\hat{f}:\hat{Y}\times U \rightarrow X$ associated with $f$ as a restriction. We would assume:
\begin{description}
 \item[$\mathbf{B1}$]The mapping $\hat{f}: W(0,T;\hat{Y}) \times \L^2(0,T;U) \times \R \rightarrow \L^2(0,T;X)$ is twice continuously Fr\'echet-differentiable.
 \item[$\mathbf{B2}$]For all $(u,\tau) \in \L^2(0,T;U)\times \R$, system~\eqref{mainsys} admits a unique solution in $\hat{W}(0,T;\hat{Y})$.
 \item[$\mathbf{B3}$]For all $\xi \in \L^2(0,T;X)$, $z_0 \in Y$, there exists a unique $z \in \hat{W}(0,T;\hat{Y})$ solution to the following system
 \begin{eqnarray*}
  \left\{ \begin{array} {l}
   \dot{z} = \hat{f}_y(\bar{y},\bar{u}).z + \xi \quad \text{on } (0,T), \\
   z(0) = z_0.
  \end{array} \right.
 \end{eqnarray*}
\end{description}
This framework, where strong regularity is considered, is needed when the nonlinearity of the mapping $f$ cannot be handled in the context of weak solutions. For instance, polynomial nonlinearities, as those occurring in the Schl\"{o}gel and FitzHugh-Nagumo systems (see~\cite{CRT}), require the notion of strong solutions, in Hilbert spaces of type $\H^s$ (see~\cite{Breiten}). \textcolor{black}{These are weak solutions which in addition lie in $\hat{W}(0,T;\hat{Y})$.}
In the context of parabolic equations, the time-dependent operator $\hat{f}_y(\bar{y},\bar{u})$ can be studied with the approaches of~\cite{Bardos} or~\cite{Troltzsch} for instance.\\

For a sake of being specific, in the rest of the paper we will keep the framework of the function space $W(0,T;Y)$, with assumptions~$\mathbf{A1}$--$\mathbf{A3}$. Analogous results  as those obtained for $W(0,T;Y)$ also hold for $\hat{W}(0,T;\hat{Y})$.

\subsection{Transformation of the problem}

The formulation of problem~\eqref{pbmain} does not enable us to derive optimality conditions. Indeed, the control is not continuous at the optimal time $\tau$, in general. Therefore, the trajectory is not differentiable at $\tau$ and the cost is not differentiable with respect to $\tau$.
This difficulty can be \textcolor{black}{circumvented} by introducing the following change of variables, $\pi(\cdot,\tau) : [0,2] \rightarrow [0,T]$, for all $\tau \in (0,T)$,
\begin{eqnarray*}
 \pi(s,\tau) & =  &
 \left\{ \begin{array} {ll}
  \tau s & \text{if } s\in [0,1], \\
  (T-\tau) s + 2\tau -T & \text{if } s\in [1, 2].
 \end{array} \right.
\end{eqnarray*}
Observe that $\pi(1,\tau)= \tau$. For future reference we introduce the time-derivative $\dot{\pi}$ of $\pi$ (with respect to $s$), as well its the partial derivative $\dot{\pi}_\tau$ with respect to $\tau$:
\begin{equation*}
\dot{\pi}(s,\tau)  =
\left\{ \begin{array} {ll}
\tau  & \text{if } s\in [0,1), \\
(T-\tau) & \text{if } s\in (1, 2],
\end{array} \right.
\qquad
\dot{\pi}_\tau(s,\tau)  =
\left\{ \begin{array} {ll}
1  & \text{if } s\in [0,1), \\
-1 & \text{if } s\in (1, 2].
\end{array} \right.
\end{equation*}
Observe that $\dot{\pi}_\tau$ is actually independent of $\tau$. To \textcolor{black}{simplify} the \textcolor{black}{notation}, we will simply write $\dot{\pi}_\tau(s)$. Given $\pi(\cdot,\tau)$ we introduce the following change of unknowns
\begin{eqnarray}
y:s \mapsto \tilde{y} \circ \pi(s,\tau), \quad u:s \mapsto \tilde{u} \circ \pi(s,\tau),
\quad s \in [0,2]. \label{eqChangeOfVar}
\end{eqnarray}
Then, for $(u,\tau) \in \L^2(0,2;U) \times (0,T)$, we are lead to consider the following system
\begin{eqnarray} \label{mainsys}
\left\{ \begin{array} {l}
\dot{y} = \dot{\pi}(\cdot,\tau) f(y,u) \quad \text{on } (0,2),\\
y(0) = y_0,
\end{array} \right.
\end{eqnarray}
and the following reformulated optimal control problem:
\begin{eqnarray}
& & \left\{ \begin{array} {l}
\displaystyle \max_{\tau\in(0,T), \ u \in \L^2(0,2;U)} \ J(u,\tau):= \int_0^2 \dot{\pi}(s,\tau) \ell(y(s),u(s)) \d s + \phi_1(y(1)) + \phi_2(y(2)) \vspace{2mm} \\
\displaystyle \text{subject to: }\quad
\dot{y} = \dot{\pi}(\cdot,\tau) f(y,u) \text{ in } Y',\quad y(0) = y_0 \text{ in } X, \\[1mm]
\qquad \qquad \qquad \textcolor{black}{G(u,\tau) := {\displaystyle \int_0^T } \dot{\pi}(s,\tau) \| u(s) \|_U^2 \d s- \gamma \leq 0.}
\end{array} \right. \label{pbmain2}
\end{eqnarray}

\textcolor{black}{
 Using the fact that $\pi(\cdot,\tau) \in \W^{1,\infty}(0,2;\R)$, we can verify that assumptions~$\mathbf{A0}$--$\mathbf{A3}$ imply the following ones.
 \begin{description} 
  \item[$\mathbf{A0'}$] The mapping $\ell :  W(0,2;Y) \times \L^2(0,2;U) \rightarrow  \L^1(0,2;\R)$ is twice continuously Fr\'echet-differentiable.
  \item[$\mathbf{A1'}$] The mapping $f : W(0,2;Y) \times \L^2(0,2;U) \rightarrow \L^2(0,2;Y')$ is twice continuously Fr\'echet-differentiable.
  \item[$\mathbf{A2'}$] For all $(u,\tau) \in \L^2(0,2;U)\times (0,T)$, $y_0 \in X$, system~\eqref{mainsys} admits a unique weak solution in $W(0,2;Y)$.
  \item[$\mathbf{A3'}$]For all $(y,u) \in W(0,2;Y) \times \L^2(0,2;U)$, $\xi \in \L^2(0,2;Y')$, $z_0\in X$, there exists a unique $z \in W(0,2;Y) $ solution to the following system
  \begin{eqnarray*}
   \left\{ \begin{array} {l}
    \dot{z} = \dot{\pi}(\cdot,\tau)f_{y}(y,u).z + \xi \quad \text{on } (0,2), \\
    z(0) = z_0.
   \end{array} \right.
  \end{eqnarray*}
 \end{description}
}
\textcolor{black}{
 Observe that the mapping $G:\L^2(0,2;U) \times (0,T) \rightarrow \R$ modeling the constraint is twice continuously Fr\'echet-differentiable.
}

\textcolor{black}{
 The change of unknown~\eqref{eqChangeOfVar} modifies the nature of the optimal control problem under investigation: While \eqref{pbmain} looks like a shape optimization problem with variable domains $(0,\tau)$ and $(\tau,T)$, problem \eqref{pbmain2} has the nature of a parametric optimization problem.
}

The equivalence of problems~\eqref{pbmain} and~\eqref{pbmain2} is straightforward because, on one hand, the time-derivative of $\tilde{y} \circ \pi(\cdot,\tau)$ is expressed with the chain rule as $\dot{\pi}(\cdot,\tau) \dot{\tilde{y}}  \circ \pi(\cdot,\tau) = \dot{\pi}(\cdot,\tau)f(\tilde{y}\circ \pi(\cdot,\tau), \tilde{u}\circ \pi(\cdot,\tau))$. On the other hand, the integral on $(0,T)$ can be split on $(0,\tau) \cup (\tau ,T)$, and the change of variables $\pi(\cdot,\tau)$ is used for transforming the integrals on $(0,\tau)$ and $(\tau,T)$ into the integrals on $(0,1)$ and $(1,2)$, respectively.
To sum up, if $\tilde{u} \in \L^2(0,T;U)$, $\tau \in \text{\textcolor{black}{$(0,T)$}}$ and $u= \tilde{u}\circ \pi(\cdot,\tau)$, the pair $(\tilde{u},\tau)$ is an optimal solution of the original problem~\eqref{pbmain} if and only if the pair $(u,\tau)$ is an optimal solution of the reformulated problem~\eqref{pbmain2}.

\section{First and second-order optimality conditions} \label{sec3}

\textcolor{black}{In this section, we provide a first and second-order sensitivity analysis for the reformulated problem~\eqref{pbmain2}. Due to the presence of the parameter $\tau$ in addition to the control variable $u$, it is non-standard to obtain elegant representations for the first- and second-order derivatives. Once they are given, their expressions are used for obtaining necessary and sufficient optimality conditions, as well as for iterative numerical methods. }

Throughout this section, $\bar{u} \in \L^2(0,2;U)$ is a fixed value of the control and $\bar{\tau} \in (0,T)$ a fixed value of the variable $\tau$. Note that we do not follow up the special cases $\bar{\tau}=0$ and $\bar{\tau}=T$. We denote by $\bar{y} \in W(0,2;Y)$ the corresponding state variable, which is the solution of~\eqref{mainsys} for $(u,\tau)=(\bar{u}, \bar{\tau})$.

The approach we use for deriving optimality conditions is classical, as described in \cite{HinzeKK01, Hinze}, for instance. However, due to the time transformation and the additional optimization variable, special attention is required. Considering the state equation as a constraint of the optimization problem~\eqref{pbmain2}, our approach mainly consists in computing the first- and second-order derivatives of the associated Lagrangian.

\subsection{Linearization of the system}

We introduce the control-to-state mapping \textcolor{black}{$(u,\tau) \in \L^2(0,2;U) \times (0,T) \mapsto S(u,\tau) \in W(0,2;Y)$}, where $S(u,\tau)$ is the solution of~\eqref{mainsys}. Remember that $\bar{y} = S(\bar{u},\bar{\tau})$.
The differentiability properties of the mapping $S$ derive from assumptions~$\mathbf{A1'}$--$\mathbf{A3'}$.

\begin{lemma} \label{lemma_diffSys}
 The mapping $S$ is twice continuously Fr\'echet-differentiable on $\L^2(0,2;U) \times (0,T)$. For $v \in \L^2(0,2;U)$, the derivatives $z=S_u(\bar{u},\bar{\tau}).v $ and $w= S_\tau(\bar{u},\bar{\tau})$ are the \textcolor{black}{respective solutions -- in the weak sense -- of} the following systems:
 \begin{equation*} \label{syssensuv}
 \left\{ \begin{array} {l}
 \dot{z} = \dot{\pi}(\cdot,\bar{\tau})f_y(\bar{y},\bar{u}).z + \dot{\pi}f_u(\bar{y},\bar{u}).v \quad \text{on } (0,2), \\
 z(0) = 0,
 \end{array} \right.
 \quad
 \left\{ \begin{array} {l}
 \dot{w} = \dot{\pi}(\cdot,\bar{\tau})f_y(\bar{y},\bar{u}).w + \dot{\pi}_{\tau}f(\bar{y},\bar{u}) \quad \text{on } (0,2), \\
 w(0) = 0.
 \end{array} \right.
 \end{equation*}
\end{lemma}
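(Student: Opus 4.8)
The plan is to realize $S$ as the solution map of the equality constraint in~\eqref{pbmain2} and to invoke the implicit function theorem. Introduce the constraint operator
\[
e : W(0,2;Y) \times \L^\infty(0,2;U) \times (0,T) \longrightarrow \L^2(0,2;Y') \times X, \qquad e(y,u,\tau) = \big( \dot{y} - \dot{\pi}(\cdot,\tau)\, f(y,u),\; y(0) - y_0 \big),
\]
so that, by $\mathbf{A2'}$, the identity $e(S(u,\tau),u,\tau) = 0$ characterizes $y = S(u,\tau)$ uniquely. The regularity of $S$ will then be inherited from that of $e$, and the two sensitivity systems will be read off by implicit differentiation.

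First I would check that $e$ is twice continuously Fr\'echet-differentiable. The map $y \mapsto \dot{y}$ is bounded linear from $W(0,2;Y)$ into $\L^2(0,2;Y')$ and $y \mapsto y(0)$ is bounded linear from $W(0,2;Y)$ into $X$ (via the embedding $W(0,2;Y)\hookrightarrow \mathcal{C}([0,2];X)$), hence both are smooth. The dependence on $\tau$ enters only through $\dot{\pi}(\cdot,\tau)$, which is affine in $\tau$ with values in $\L^\infty(0,2;\R)$, so $\tau \mapsto \dot{\pi}(\cdot,\tau)$ is smooth; multiplication by this bounded scalar multiplier preserves $\L^2(0,2;Y')$. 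The only genuinely nonlinear ingredient is the superposition term $f(y,u)$, whose twice continuous differentiability as a map into $\L^2(0,2;Y')$ is exactly $\mathbf{A1'}$. Combining these and using the product and chain rules yields that $e$ is of class $C^2$.

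Next I would show that $e_y(\bar{y},\bar{u},\bar{\tau})$ is a topological isomorphism. Its action on $\delta y \in W(0,2;Y)$ is $\delta y \mapsto ( \dot{\delta y} - \dot{\pi}(\cdot,\bar{\tau}) f_y(\bar{y},\bar{u}).\delta y,\; \delta y(0) )$, so surjectivity and injectivity amount precisely to the unique solvability, for every right-hand side $(\xi,z_0) \in \L^2(0,2;Y') \times X$, of the linearized Cauchy problem in $\mathbf{A3'}$. Bijectivity of this bounded linear operator upgrades to bounded invertibility by the open mapping theorem. The implicit function theorem then applies and delivers the claimed $C^2$-regularity of $S$, with $S_u(\bar{u},\bar{\tau}) = -e_y^{-1} e_u$ and $S_\tau(\bar{u},\bar{\tau}) = -e_y^{-1} e_\tau$. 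Differentiating $e(S(u,\tau),u,\tau)=0$ in the direction $v$ gives $e_y.z + e_u.v = 0$ with $z = S_u(\bar{u},\bar{\tau}).v$, i.e.\ the first system; differentiating in $\tau$, and noting that the explicit $\tau$-dependence of $e$ produces the source $-\dot{\pi}_\tau f(\bar{y},\bar{u})$ (since $\p_\tau \dot{\pi} = \dot{\pi}_\tau$ and the initial condition is $\tau$-independent), gives $e_y.w + e_\tau = 0$ with $w = S_\tau(\bar{u},\bar{\tau})$, i.e.\ the second system.

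The main obstacle is not the implicit-function machinery itself but the verification that the nonlinear superposition operator and its first two derivatives take values in the correct integrability class, which is the classical two-norm discrepancy. This is precisely why the statement is restricted to base points with $\L^\infty$-controls while the directions $v$ are only taken in $\L^2$: differentiability of $S$ fails in general from $\L^2$ to $\L^2$, and the role of assumptions $\mathbf{A0'}$--$\mathbf{A1'}$ is to encapsulate exactly the regularity needed for $e$ to be $C^2$ on $W(0,2;Y)\times\L^\infty(0,2;U)\times(0,T)$. For a concrete dynamics $f$, checking these assumptions is where the real analytical effort is spent.
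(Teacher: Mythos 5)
Your proposal is correct and follows essentially the same route as the paper: the same constraint operator $e(y,u,\tau) = (\dot{y} - \dot{\pi}(\cdot,\tau)f(y,u),\, y(0)-y_0)$, smoothness of $e$ from $\mathbf{A1'}$, invertibility of $e_y$ from $\mathbf{A3'}$, and the implicit function theorem, with the sensitivity systems read off by implicit differentiation. You merely fill in details the paper leaves implicit (the open mapping theorem step, the explicit formulas $S_u = -e_y^{-1}e_u$ and $S_\tau = -e_y^{-1}e_\tau$, and the two-norm discussion motivating the $\L^\infty$ base point, whereas the paper simply sets $e$ up on $\L^2(0,2;U)$).
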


\begin{proof}
 Consider the mapping
 \begin{eqnarray*}
  \begin{array} {rccl}
   e: & W(0,2;Y) \times \L^2(0,2;U) \times (0,T) & \rightarrow & \L^2(0,2;Y')\times X, \\
   & (y,u,\tau) & \mapsto & (\dot{y} - \dot{\pi}(\cdot,\tau)f(y,u) , y(0) - y_0).
  \end{array}
 \end{eqnarray*}
 Since we have the identity $e(S(u,\tau),u,\tau) = 0$, assumptions~$\mathbf{A1'}$--$\mathbf{A3'}$ enable us to apply the implicit function theorem\textcolor{black}{, in the same fashion as~\cite{Hinze}, section~1.6, pages~57-58}. In fact, \textcolor{black}{invertibility} of the mapping $e_y(y,u,\tau)$ is a consequence of $\mathbf{A3'}$, and the required smoothness conditions follow from $\mathbf{A1'}$. The result then follows.
\end{proof}

\color{black}
\begin{corollary}
 The cost function $J$ is twice continuously Fr\'echet differentiable.
\end{corollary}

\begin{proof}
 The following mapping is twice continuously Fr\'echet differentiable
 \begin{eqnarray*}
  (y,u,\tau) \in W(0,2;Y) \times \L^2(0,2;U) \times (0,T)
  & \mapsto &
  \int_0^2 \dot{\pi}(s,\tau) \ell(y(s),u(s)) \d s + \phi_1(y(1)) + \phi_2(y(2)),
 \end{eqnarray*}
 by $\mathbf{A0'}$, and because $\phi_1$ and $\phi_2$ are twice continuously Fr\'echet differentiable. Therefore, $J$ is also twice continuously Fr\'echet differentiable, by composition by $S$.
\end{proof}

\color{black}

\subsection{Vectorial formalism}
We define the functional space $\mathcal{Y}$ and its dual space as follows:
\begin{eqnarray*}
 \mathcal{Y}= X \times X \times \L^2(0,2;Y), & &
 \mathcal{Y}'= X \times X \times \L^2(0,2;Y').
\end{eqnarray*}
Next we introduce the mapping $\mathbf{S}$ as follows:
\begin{eqnarray*}
 \mathbf{S}: (u,\tau) \in \L^2(0,2;U) \times (0,T) & \mapsto & (S(u,\tau)(1), S(u,\tau)(2), S(u,\tau)) \in \mathcal{Y}.
\end{eqnarray*}
As a consequence of Lemma~\ref{lemma_diffSys}, the mapping $\mathbf{S}$ is twice continuously differentiable. Its first-order derivatives are given by:
\begin{eqnarray} \label{eqDerivativesS}
\mathbf{S}_u(u,\tau)= \big(S_u(u,\tau)(1), S_u(u,\tau)(2), S_u(u,\tau) \big), & &
\mathbf{S}_\tau(u,\tau)= \big(S_\tau(u,\tau)(1), S_\tau(u,\tau)(2), S_\tau(u,\tau) \big).
\end{eqnarray}
Let us define the operator $\mathcal{K} \in \mathcal{L}\left(\L^2(0,2;Z);\mathcal{Y} \right)$ by
\begin{eqnarray*}
 \begin{array} {rrcl}
  \mathcal{K} : &
  \xi & \mapsto & (z(1),z(2),z)
 \end{array}
\end{eqnarray*}
where $z \in W(0,2;Y)$ is defined -- in virtue of assumption~$\mathbf{A3}$ -- as the solution of
\begin{eqnarray}
\left\{ \begin{array} {l}
\dot{z} = \dot{\pi}(\cdot,\bar{\tau})f_y(\bar{y},\bar{u}).z + \xi \quad \text{on } (0,2), \\
z(0) = 0.
\end{array} \right. \label{syslemmaS}
\end{eqnarray}

\begin{lemma} \label{lemma-adjS}
 The adjoint $\mathcal{K}^\ast \in \mathcal{L} \left( \mathcal{Y}' ; \L^2(0,2;Z')\right)$ of $\mathcal{K}$ is given by
 $\mathcal{K}^{\ast}(a,b,w) = q$,
 where $q$ is the solution of
 \begin{eqnarray}
 \left\{ \begin{array} {l}
 -\dot{q} = \dot{\pi}(\cdot,\bar{\tau})f_y(\bar{y},\bar{u})^\ast.q  + w \quad \text{on } (0,1)\cup (1,2), \\
 q(2) = b,\\
 q(1^+)-q(1^-) + a = 0.
 \end{array} \right. \label{syslemmaStar}
 \end{eqnarray}
\end{lemma}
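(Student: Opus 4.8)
The plan is to verify directly the defining relation of the adjoint, namely
\[
\langle \mathcal{K}\xi,(a,b,w)\rangle_{\mathcal{Y},\mathcal{Y}'} = \langle \xi, q\rangle_{\L^2(0,2;Z),\L^2(0,2;Z')} \qquad \text{for every } \xi \in \L^2(0,2;Z),
\]
where $q$ denotes the solution of~\eqref{syslemmaStar}; since the adjoint is uniquely determined, this identity yields $\mathcal{K}^\ast(a,b,w)=q$. Let $z\in W(0,2;Y)$ be the solution of~\eqref{syslemmaS} associated with $\xi$, so that $\mathcal{K}\xi=(z(1),z(2),z)$. Using the identifications $Z=Y'$ and $Z'=Y$ (the latter by reflexivity of $Y$) together with $X\equiv X'$, the left-hand pairing reads $\langle z(1),a\rangle_X + \langle z(2),b\rangle_X + \int_0^2 \langle w,z\rangle_{Y',Y}\,\d s$, while the right-hand side is $\int_0^2 \langle \xi,q\rangle_{Y',Y}\,\d s$. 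The idea is to start from the latter, eliminate $\xi$ by means of the state equation, and transfer the time derivative onto $q$.

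First I would substitute $\xi = \dot z - \dot{\pi}(\cdot,\bar{\tau})f_y(\bar{y},\bar{u}).z$ and split the integral over $(0,1)$ and $(1,2)$, on each of which both $z$ and $q$ belong to $W$ -- this splitting being necessary because $q$ is only piecewise regular, owing to its jump at $s=1$. On each subinterval the Gelfand-triple integration-by-parts formula
\[
\langle z(\beta),q(\beta)\rangle_X - \langle z(\alpha),q(\alpha)\rangle_X = \int_\alpha^\beta \big( \langle \dot z,q\rangle_{Y',Y} + \langle \dot q,z\rangle_{Y',Y}\big)\,\d s
\]
applies. Summing the two contributions, invoking the continuity of $z$ at $s=1$ (since $z\in W(0,2;Y)\hookrightarrow \mathcal{C}([0,2];X)$) and the initial condition $z(0)=0$, the boundary terms collapse to $\langle z(1),q(1^-)-q(1^+)\rangle_X + \langle z(2),q(2)\rangle_X$, and one obtains
\[
\int_0^2\langle\xi,q\rangle\,\d s = \langle z(1),q(1^-)-q(1^+)\rangle_X + \langle z(2),q(2)\rangle_X - \int_0^2 \langle \dot q,z\rangle\,\d s - \int_0^2 \dot{\pi}\langle f_y.z, q\rangle\,\d s.
\]

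Next I would insert the adjoint dynamics $\dot q = -\dot{\pi}f_y^\ast.q - w$ from~\eqref{syslemmaStar} into the term $-\int_0^2\langle \dot q,z\rangle$, which produces $\int_0^2\dot{\pi}\langle f_y^\ast.q,z\rangle + \int_0^2\langle w,z\rangle$. The key algebraic point is the adjoint identity $\langle f_y^\ast.q,z\rangle_{Y',Y} = \langle f_y.z,q\rangle_{Y',Y}$, valid because $f_y(\bar{y},\bar{u})\in\mathcal{L}(Y,Y')$ and, by reflexivity of $Y$, $f_y^\ast\in\mathcal{L}(Y,Y')$; this makes the two integrals involving $\dot{\pi}\langle f_y.z,q\rangle$ cancel. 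What survives is exactly $\langle z(1),q(1^-)-q(1^+)\rangle_X + \langle z(2),q(2)\rangle_X + \int_0^2\langle w,z\rangle$. Finally, inserting the terminal condition $q(2)=b$ and the jump relation $q(1^-)-q(1^+)=a$ (equivalent to $q(1^+)-q(1^-)+a=0$), the expression becomes the left-hand pairing. As $\xi$ is arbitrary, the identification $\mathcal{K}^\ast(a,b,w)=q$ follows.

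Two points require care and constitute the bulk of the work. First, one must establish that~\eqref{syslemmaStar} admits a unique solution $q\in\L^2(0,2;Z')$ depending continuously on $(a,b,w)$, which also confirms $\mathcal{K}^\ast\in\mathcal{L}(\mathcal{Y}';\L^2(0,2;Z'))$: the system is solved backward on $(1,2)$ from $q(2)=b$, the jump relation fixes $q(1^-)$ in terms of $q(1^+)$, and one then solves backward on $(0,1)$; each backward problem is converted into a forward one via $s\mapsto 2-s$ and is well-posed through the analogue of assumption~$\mathbf{A3'}$ for the bounded operator $f_y^\ast$, which has the same structure as $f_y$. Second, the piecewise integration by parts must be rigorously justified; I expect the main obstacle to lie precisely in the careful bookkeeping of the interface terms at $s=1$, where $q$ jumps while $z$ remains continuous, so that the jump $q(1^-)-q(1^+)$ pairs with $z(1)$ to reproduce the first component $a$.
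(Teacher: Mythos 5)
Your proof is correct and follows essentially the same route as the paper's: a direct verification of the duality identity $\langle (a,b,w);\mathcal{K}\xi\rangle_{\mathcal{Y}';\mathcal{Y}} = \langle q;\xi\rangle_{\L^2(0,2;Z');\L^2(0,2;Z)}$ by integrating by parts separately on $(0,1)$ and $(1,2)$, cancelling the $f_y$/$f_y^\ast$ terms, and matching the boundary and jump terms with the conditions $q(2)=b$ and $q(1^+)-q(1^-)+a=0$. The only differences are inessential: you run the computation from the $\xi$-side toward the $(a,b,w)$-side (the paper does the reverse), and you additionally sketch the well-posedness of the backward system \eqref{syslemmaStar}, a point the paper leaves implicit.
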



\begin{proof}
 Let $\xi \in \L^2(0,2;Z)$ and let $z$ be the solution of system~\eqref{syslemmaS} corresponding to $\xi$. Let $(a,b,w) \in \mathcal{Y}'$ and denote by $q$ the solution of system~\eqref{syslemmaStar} corresponding to $(a,b,w)$. We calculate by integration by parts
 \begin{eqnarray*}
  \langle (a,b,w);\mathcal{K}(\xi)\rangle_{\mathcal{Y}';\mathcal{Y}} & = & \langle w;z \rangle_{\L^2(0,2;Y');\L^2(0,2;Y)}
  + \langle a;z(1)\rangle_{X} + \langle b;z(2)\rangle_{X} \\
  & = & \int_0^{1} \langle -\dot{q} -\dot{\pi} f_y^\ast(\bar{y},\bar{u}).q; z\rangle_{Y';Y}\d s + \int_{1}^2 \langle -\dot{q} -\dot{\pi} f_y^\ast(\bar{y},\bar{u}).q; z\rangle_{Y';Y}\d s \\
  & &  + \, \langle a;z(1)\rangle_{X} + \langle b;z(2)\rangle_{X}\\
  & = & \int_0^{1} \langle q;\dot{z}-\dot{\pi} f_y(\bar{y},\bar{u}).z \rangle_{Z';Z}\d s -\langle q(1^-);z(1)\rangle_X  + \int_{1}^2 \langle q;\dot{z}-\dot{\pi} f_y(\bar{y},\bar{u}).z \rangle_{Z';Z}\d s\\
  & &   + \, \langle q(1^+);z(1)\rangle_X - \langle q(2);z(2)\rangle_X + \langle a;z(1)\rangle_X + \langle b;z(2)\rangle_X \\
  & = & \int_0^{1} \langle q; \xi \rangle_{Z';Z} \d s  + \int_{1}^2 \langle q;\xi \rangle_{Z';Z}\d s,
 \end{eqnarray*}
 which leads to $\langle (a,b,w);\mathcal{K}(\xi)\rangle_{\mathcal{Y}';\mathcal{Y}} = \langle q ; \xi \rangle_{\L^2(0,2;Z');\L^2(0,2;Z)}$ and thus completes the proof.
\end{proof}

Lemma~\ref{lemma-adjS} enables us to conveniently express the adjoint operators of $\mathbf{S}_u(\bar{u},\bar{\tau})$ and $\mathbf{S}_\tau(\bar{u},\bar{\tau})$.

\begin{corollary}
 The adjoint operators $\mathbf{S}_u(\bar{u},\bar{\tau})^{\ast} \in \mathcal{L}\left(\mathcal{Y}';\L^2(0,2;U')\right)$ and $\mathbf{S}_\tau(\bar{u},\bar{\tau})^{\ast} \in \mathcal{L}\left(\mathcal{Y}';\R\right)$ are given by
 \begin{eqnarray*}
  \mathbf{S}_u(\bar{u},\bar{\tau})^{\ast}.(a,b,w)  =  \dot{\pi}(\cdot,\bar{\tau}) f_u^\ast(\bar{y},\bar{u}) \mathcal{K}^\ast(a,b,w),
  & &
  \mathbf{S}_\tau(\bar{u},\bar{\tau})^{\ast}.(a,b,w)  =  \int_0^2\dot{\pi}_\tau \langle f(\bar{y},\bar{u}) ;\mathcal{K}^\ast(a,b,w)\rangle_{Y';Y} \d s.
 \end{eqnarray*}
\end{corollary}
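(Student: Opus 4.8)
The plan is to exploit the observation that both first-order derivatives of $\mathbf{S}$ factor through the solution operator $\mathcal{K}$, and then to read off the adjoints directly from Lemma~\ref{lemma-adjS} together with the chain rule for adjoints of compositions.

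First I would establish the factorization. Comparing the system defining $z = S_u(\bar{u},\bar{\tau}).v$ in Lemma~\ref{lemma_diffSys} with the system~\eqref{syslemmaS} defining $\mathcal{K}$, I note that both are the \emph{same} linear Cauchy problem $\dot{z} = \dot{\pi}(\cdot,\bar{\tau})f_y(\bar{y},\bar{u}).z + \xi$, $z(0)=0$, the only difference being the forcing term: in the first case $\xi = \dot{\pi}(\cdot,\bar{\tau})f_u(\bar{y},\bar{u}).v$, which lies in $\L^2(0,2;Z)$ by $\mathbf{A1'}$. Since $\mathcal{K}(\xi) = (z(1),z(2),z)$ coincides with $\mathbf{S}_u(\bar{u},\bar{\tau}).v$ by~\eqref{eqDerivativesS}, I obtain $\mathbf{S}_u(\bar{u},\bar{\tau}) = \mathcal{K}\circ M_u$, where $M_u \in \mathcal{L}(\L^2(0,2;U);\L^2(0,2;Z))$ is the pointwise multiplier $v \mapsto \dot{\pi}(\cdot,\bar{\tau})f_u(\bar{y},\bar{u}).v$. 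The identical comparison with the $w$-system of Lemma~\ref{lemma_diffSys} gives $\mathbf{S}_\tau(\bar{u},\bar{\tau}) = \mathcal{K}(\dot{\pi}_\tau f(\bar{y},\bar{u}))$, the forcing again lying in $\L^2(0,2;Z)$ by $\mathbf{A1'}$.

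Next I would compute the two adjoints. For the control derivative, $\mathbf{S}_u(\bar{u},\bar{\tau})^\ast = M_u^\ast \circ \mathcal{K}^\ast$, so it remains only to identify $M_u^\ast \in \mathcal{L}(\L^2(0,2;Z');\L^2(0,2;U'))$. A direct computation of the pairing, for $q \in \L^2(0,2;Z')$ and $v \in \L^2(0,2;U)$, gives
\[
\langle q; M_u v\rangle_{\L^2(0,2;Z');\L^2(0,2;Z)} = \int_0^2 \dot{\pi}(\cdot,\bar{\tau})\langle f_u(\bar{y},\bar{u}).v; q\rangle_{Y';Y}\,\d s = \int_0^2 \langle v; \dot{\pi}(\cdot,\bar{\tau})f_u^\ast(\bar{y},\bar{u}).q\rangle_{U;U'}\,\d s,
\]
where I used the reflexivity of $Y$ (so that $Z' \cong Y$ and $f_u^\ast(\bar{y},\bar{u})$ maps $Y$ into $U'$) and the fact that the scalar $\dot{\pi}$ commutes with the pairing. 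Hence $M_u^\ast.q = \dot{\pi}(\cdot,\bar{\tau})f_u^\ast(\bar{y},\bar{u}).q$, and composing with $\mathcal{K}^\ast$ yields $\mathbf{S}_u(\bar{u},\bar{\tau})^\ast.(a,b,w) = \dot{\pi}(\cdot,\bar{\tau})f_u^\ast(\bar{y},\bar{u})\mathcal{K}^\ast(a,b,w)$. For the time derivative, since $\tau$ is scalar I identify $\mathbf{S}_\tau(\bar{u},\bar{\tau})$ with its image of $1$ in $\mathcal{Y}$, so that $\mathbf{S}_\tau(\bar{u},\bar{\tau})^\ast.(a,b,w)$ is the real number $\langle (a,b,w); \mathbf{S}_\tau(\bar{u},\bar{\tau})\rangle_{\mathcal{Y}';\mathcal{Y}}$. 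Substituting $\mathbf{S}_\tau(\bar{u},\bar{\tau}) = \mathcal{K}(\dot{\pi}_\tau f(\bar{y},\bar{u}))$ and invoking the defining property of $\mathcal{K}^\ast$ from Lemma~\ref{lemma-adjS} gives
\[
\mathbf{S}_\tau(\bar{u},\bar{\tau})^\ast.(a,b,w) = \langle \mathcal{K}^\ast(a,b,w); \dot{\pi}_\tau f(\bar{y},\bar{u})\rangle_{\L^2(0,2;Z');\L^2(0,2;Z)} = \int_0^2 \dot{\pi}_\tau \langle f(\bar{y},\bar{u}); \mathcal{K}^\ast(a,b,w)\rangle_{Y';Y}\,\d s,
\]
which is the asserted expression.

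I expect the only delicate point to be the bookkeeping of the duality pairings in the middle step: one must use the reflexivity of $Y$ to identify $Z' \cong Y$ and thereby make sense of $f_u^\ast(\bar{y},\bar{u})$ and of $f(\bar{y},\bar{u})$ paired against the range $\L^2(0,2;Z')=\L^2(0,2;Y)$ of $\mathcal{K}^\ast$, and to verify that the pointwise-in-time pairings integrate consistently. Everything else is a mechanical consequence of the factorizations through $\mathcal{K}$ and of Lemma~\ref{lemma-adjS}.
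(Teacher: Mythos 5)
Your proof is correct and follows the same route as the paper's: both factor $\mathbf{S}_u(\bar{u},\bar{\tau})$ and $\mathbf{S}_\tau(\bar{u},\bar{\tau})$ through the operator $\mathcal{K}$ via Lemma~\ref{lemma_diffSys} and~\eqref{eqDerivativesS}, and then invoke Lemma~\ref{lemma-adjS} to pass to the adjoints. The paper states this in two lines; you have merely filled in the bookkeeping (the multiplier adjoint $M_u^\ast$ and the scalar-parameter pairing) that the paper leaves implicit.
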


\begin{proof}
 As a consequence of Lemma~\ref{lemma_diffSys} and~\eqref{eqDerivativesS}, we have
 $\mathbf{S}_u(\bar{u},\bar{\tau}) = \dot{\pi}(\cdot,\bar{\tau}) \mathcal{K} \circ  f_u(\bar{y},\bar{u})$ and $\mathbf{S}_\tau(\bar{u},\bar{\tau}) = \dot{\pi}_\tau \mathcal{K} \circ f(\bar{y},\bar{u})$.
 The result now follows from Lemma~\ref{lemma-adjS}.
\end{proof}

\subsection{Lagrangian formulation and computation of derivatives}

We recall that $\bar{u} \in \L^2(0,2;U)$ and $\bar{\tau}$ are fixed values of the control and the time variable, respectively. We also fix $\bar{y}= S(\bar{u},\bar{\tau})$ and $\bar{\mathbf{y}}= \mathbf{S}(\bar{u},\bar{\tau})= (\bar{y}(1),\bar{y}(2),\bar{y})$.
We introduce the Hamiltonian:
\begin{eqnarray*}
 \begin{array}{rrcl}
  H:& Y \times U \times Z' \times \R & \mapsto & \mathbb{R} \\
  & (y,u,p,\lambda) & \rightarrow & \ell(y,u) + \langle p; f(y,u) \rangle_{Z';Z} \textcolor{black}{ - \lambda \| u \|_U^2.}
 \end{array}
\end{eqnarray*}
The adjoint state is defined as the solution of the following linear system:
\begin{eqnarray} \label{sysadj}
\left\{ \begin{array} {l}
-\dot{p} = \dot{\pi}(\cdot,\bar{\tau})H_y(\bar{y}, \bar{u}, p)\quad \text{on } (0,1) \cup (1,2),\\
p(2) = D \phi_2(\bar{y}(2)) , \\
p(1^+) - p(1^-) + D\phi_1(\bar{y}(1)) = 0.
\end{array} \right.
\end{eqnarray}
It satisfies:
\begin{eqnarray*}
 \bar{p}_{|(0,1)} \in W(0,1;Z'), & & \bar{p}_{|(1,2)} \in W(1,2;Z').
\end{eqnarray*}
\textcolor{black}{
 In the system above, the variable $\lambda$ does not appear, since $H_y$ is independent of $\lambda$.} Recall the continuous embeddings $W(I;Z') \hookrightarrow \mathcal{C}(\overline{I};X)$, for $I = (0,1)$ and $I = (1,2)$. In order to solve this backward system, we first consider $\bar{p}(2) = D\phi_2(\bar{y}(2))$ as the initial condition in $X$, next compute $\bar{p}$ on $(1,2)$ according to the first equation of~\eqref{sysadj}, deduce $\bar{p}(1^-)$ from $\bar{p}(1^+)$ with the transmission condition in $X$, and finally compute $\bar{p}$ on $(0,1)$ as previously. From a more abstract point of view, the affine mapping $p \mapsto H_y(\bar{y},\bar{u},p) = f_y(\bar{y},\bar{u})^*.p + \ell_y(\bar{y},\bar{u})$ is in the form of the right-hand-side in system~\eqref{syslemmaStar}, and thus Lemma~\ref{lemma-adjS} allows the existence and uniqueness of a solution to system~\eqref{sysadj}.

\textcolor{black}{
 We work with two Lagrangian functionals. The first one is defined as follows:
 \begin{eqnarray*}
  \begin{array}{rrcl}
   L\colon & \L^2(0,2;U) \times (0,T) \times \R & \rightarrow & \R \\
   & (u,\tau,\lambda) & \mapsto & J(u,\tau) - \lambda G(u,\tau).
  \end{array}
 \end{eqnarray*}
 We also consider the following extended Lagrangian:
 \begin{eqnarray*}
  \begin{array} {rl}
   \mathbf{L} : &
   \big(X\times X\times W(0,2;Y)\big) \times \L^2(0,2;U) \times (0,T) \times W(0,2;Y) \times \R \rightarrow  \mathbb{R} \\[2mm]
   & \big( \mathbf{y}= (a_1,a_2,y), u, \tau, p, \lambda \big)  \mapsto
   \phi_1(a_1)+ \phi_2(a_2) + \displaystyle{\int_0^2} \Big(\dot{\pi}(s,\tau) H(y,u,p,\lambda)(s)-\langle p(s);\dot{y}(s) \rangle_{Z';Z} \Big)\d s \\[4mm]
   & \hspace*{125pt}
   - \langle p(0);y(0)-y_0 \rangle_X
   + \langle p(2);y(2)-a_2 \rangle_X
   - \langle [p]_{1}; y(1)-a_1 \rangle_X,
  \end{array}
 \end{eqnarray*}
 where we denote $[p]_{1} = p(1^+) - p(1^-)$.
 Note that the functionals $L$ and $\mathbf{L}$ are both twice continuously differentiable. The following lemma enables us to derive the first and second-order derivatives of the $L$ in a convenient way.
}

\begin{lemma} \label{lemma_Ly}
 The following identity holds:
 \textcolor{black}{
  \begin{equation} \label{lagrangianIsCost}
  L(u,\tau,\lambda)= \mathbf{L}(\mathbf{S}(u,\tau),u,\tau,p,\lambda), \quad
  \forall (u,\tau,p,\lambda) \in \L^2(0,2;U)\times (0,T)\times W(0,2;Z') \times \R.
  \end{equation}
 }
 Moreover, for all $\lambda \in \R$ we have
 \begin{eqnarray*}
  \mathbf{L}_{\mathbf{y}}(\bar{\mathbf{y}},\bar{u},\bar{\tau}, \bar{p}, \lambda)  =  0
  & &  \text{in $\mathcal{Y}'$}.
 \end{eqnarray*}
\end{lemma}

\begin{proof}
 Identity~\eqref{lagrangianIsCost} follows directly from the definitions of $J$, $\mathbf{S}$, $L$, and $\mathbf{L}$. We decompose $\mathbf{L}_{\mathbf{y}}(\bar{\mathbf{y}},\bar{u},\bar{\tau},\bar{p},\lambda) \in \mathcal{Y}'$ into $\mathbf{L}_{a_1}(\bar{\mathbf{y}},\bar{u},\bar{\tau},\bar{p},\lambda) \in X$, $\mathbf{L}_{a_2}(\bar{\mathbf{y}},\bar{u},\bar{\tau},\bar{p},\lambda) \in X$ and $\mathbf{L}_{y}(\bar{\mathbf{y}},\bar{u},\bar{\tau},\bar{p},\lambda)\in Y'$. From the definition of the adjoint state, we obtain the following identities in $X$:
 \begin{eqnarray*}
  \mathbf{L}_{a_1}(\bar{\mathbf{y}},\bar{u},\bar{\tau},\bar{p},\lambda)= D\phi_1(\bar{y}(1)) + [\bar{p}]_{1} =0, & &
  \mathbf{L}_{a_2}(\bar{\mathbf{y}},\bar{u},\bar{\tau},\bar{p},\lambda)= D\phi_2(\bar{y}(2)) - \bar{p}(2)= 0.
 \end{eqnarray*}
 Moreover, for all $\delta y \in \W(0,2;Y)$, we obtain by integration by parts
 \begin{eqnarray*}
  \langle \mathbf{L}_y(\bar{\mathbf{y}},\bar{u},\bar{\tau},\bar{p},\lambda); \delta y) \rangle_{Y';Y} & = &
  \int_0^2 \dot{\pi}(s,\bar{\tau})\langle H_y(\bar{y},\bar{u},\bar{p})(s) ;\delta y(s)\rangle_{Y';Y}\d s
  - \int_0^2 \langle \bar{p}(s);\delta \dot{y}(s) \rangle_{Y;Y'}\d s \\
  & & \qquad -\, \langle \bar{p}(0); \delta y(0) \rangle_X
  + \langle \bar{p}(2); \delta y(2) \rangle_X
  - \langle [\bar{p}]_{1}; \delta y(1) \rangle_X \\
  & =& 0,
 \end{eqnarray*}
 which concludes the proof.
\end{proof}

\begin{proposition} \label{propJ1}
 The first-order derivatives of $L$ are given by:
 \begin{align}
 {L}_u(\bar{u},\bar{\tau},\lambda).v = \ & \mathbf{L}_u(\bar{\mathbf{y}},\bar{u},\bar{\tau},\bar{p},\lambda).v
 = \int_0^2 \dot{\pi}(s,\bar{\tau}) H_u(\bar{y},\bar{u},\bar{p},\lambda)(s).v(s) \d s,
 \label{eqDerivativesL1} \\
 {L}_\tau(\bar{u},{\bar\tau},\lambda) = \ & \mathbf{L}_\tau(\bar{\mathbf{y}},\bar{u},\bar{\tau},\bar{p}) =
 \int_0^2 \dot{\pi}_\tau H(\bar{y},\bar{u},\bar{p},\lambda)(s) \d s. \label{eqDerivativesL2}
 \end{align}
\end{proposition}

\begin{proof}
 The result follows directly from Lemma~\ref{lemma_Ly}. Applying the chain rule to~\eqref{lagrangianIsCost}, we obtain:
 \begin{eqnarray*}
  D{L}(\bar{u},\bar{\tau},\lambda) & = & \begin{pmatrix} \mathbf{L}_{\mathbf{y}}(\bar{\mathbf{y}},\bar{u},\bar{\tau},\bar{p},\lambda)\mathbf{S}_u(\bar{u},\bar{\tau}) +
   \mathbf{L}_u(\bar{\mathbf{y}},\bar{u},\bar{\tau},\bar{p},\lambda) \\
   \mathbf{L}_{\mathbf{y}}(\bar{\mathbf{y}},\bar{u},\bar{\tau},\bar{p},\lambda)\mathbf{S}_\tau(\bar{u},\bar{\tau})  +
   \mathbf{L}_\tau(\bar{\mathbf{y}},\bar{u},\bar{\tau},\bar{p},\lambda) \end{pmatrix}.
 \end{eqnarray*}
 Since from Lemma~\ref{lemma_Ly} we have $\mathbf{L}_{\mathbf{y}}(\bar{\mathbf{y}},\bar{u},\bar{\tau},\bar{p},\lambda)=0$, formulas \eqref{eqDerivativesL1} and \eqref{eqDerivativesL2} hold.
\end{proof}

In the following proposition, we calculate the Hessian of $L$ easily, thanks to the Lagrangian formalism described above. The fact that $\mathbf{L}_{\mathbf{y}}(\bar{\mathbf{y}},\bar{u},\bar{\tau}, \bar{p}, \lambda)= 0$ is a key property here. We denote by $\I$ the linear identity mapping in $\L^2(0,2;U')$.

\begin{proposition} \label{propJ2}
 The second-order derivative of ${L}$ (with respect to $(u,\tau)$) is given by:
 \begin{eqnarray*}
  D^2 {L}(\bar{u},\bar{\tau},\lambda) & = &
  \begin{pmatrix}
   \mathbf{S}_u^\ast(\bar{u},\bar{\tau}) & \I & 0 \\
   \mathbf{S}_\tau^\ast(\bar{u},\bar{\tau}) & 0 & 1
  \end{pmatrix}
  D^2 \mathbf{L}(\bar{\mathbf{y}},\bar{u},\bar{\tau},\bar{p},\lambda)
  \begin{pmatrix}
   \mathbf{S}_u(\bar{u},\bar{\tau}) & \mathbf{S}_\tau(\bar{u},\bar{\tau}) \\
   \I & 0 \\
   0 & 1
  \end{pmatrix}.
 \end{eqnarray*}
 The second-order derivatives read, in a more explicit form, as
 \begin{eqnarray*}
  D^2 L(\bar{u},\bar{\tau},\lambda).\big( (v,\theta), (\hat{v},\hat{\theta}) \big)
  & = & D^2 \phi_1(\bar{y}(1)). \big( z(1), \hat{z}(1) \big)
  + D^2 \phi_2(\bar{y}(2)). \big( z(2), \hat{z}(2) \big) \\
  & & + \int_0^2 \dot{\pi}(s,\bar{\tau}) D^2 H(\bar{y},\bar{u},\bar{p},\lambda).
  \big( (z,v),(\hat{z},\hat{v}) \big)(s) \d s \\
  & & + \theta \int_0^2 \dot{\pi}_\tau (s) DH(\bar{y},\bar{u},\bar{p},\lambda)\big( \hat{z}, \hat{v} \big)(s) \d s
  +\hat{\theta} \int_0^2 \dot{\pi}_\tau (s) DH(\bar{y},\bar{u},\bar{p},\lambda)\big( z, v \big)(s) \d s,
 \end{eqnarray*}
 where $z= S_u(\bar{u},\bar{\tau}).v+ S_\tau(\bar{u},\bar{\tau}).\theta$ and $\hat{z}= S_u(\bar{u},\bar{\tau}).\hat{v}+ S_\tau(\bar{u},\bar{\tau}).\hat{\theta}$.
\end{proposition}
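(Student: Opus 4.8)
The plan is to exploit the Lagrangian identity of Lemma~\ref{lemma_Ly}, which asserts that $J(u,\tau)= L(\mathbf{S}(u,\tau),u,\tau,p)$ holds for \emph{every} $p$, and in particular for the fixed adjoint state $\bar p$. First I would introduce the auxiliary lift $G:(u,\tau)\mapsto(\mathbf{S}(u,\tau),u,\tau)$, whose Fr\'echet derivative at $(\bar u,\bar\tau)$ is precisely the right-hand matrix in the statement,
\[
DG(\bar u,\bar\tau)=\begin{pmatrix}\mathbf{S}_u(\bar u,\bar\tau) & \mathbf{S}_\tau(\bar u,\bar\tau)\\ \I & 0\\ 0 & 1\end{pmatrix},
\]
and whose adjoint is the left-hand matrix. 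Writing $J(u,\tau)=L(G(u,\tau),\bar p)$ and applying the second-order chain rule for compositions gives
\[
D^2 J(\bar u,\bar\tau)=DG(\bar u,\bar\tau)^\ast\,D^2L(\bar{\mathbf{y}},\bar u,\bar\tau,\bar p)\,DG(\bar u,\bar\tau)+DL(\bar{\mathbf{y}},\bar u,\bar\tau,\bar p)\,D^2G(\bar u,\bar\tau).
\]

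The decisive simplification comes from the second term. Since the $u$- and $\tau$-components of $G$ are affine, the only second-order contribution in $D^2G$ sits in the $\mathbf{y}$-slot, so $DL\cdot D^2G$ reduces to $L_{\mathbf{y}}(\bar{\mathbf{y}},\bar u,\bar\tau,\bar p)\,D^2\mathbf{S}(\bar u,\bar\tau)$. By Lemma~\ref{lemma_Ly} we have $L_{\mathbf{y}}(\bar{\mathbf{y}},\bar u,\bar\tau,\bar p)=0$ in $\mathcal{Y}'$, which annihilates this term and yields the compact matrix formula. This is the heart of the argument, and the main conceptual point: it is precisely the choice $p=\bar p$ that removes the otherwise troublesome second derivative $D^2\mathbf{S}$ of the control-to-state map, which we therefore never have to compute.

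It then remains to expand $D^2L(\bar{\mathbf{y}},\bar u,\bar\tau,\bar p)$ from the definition of $L$. Because $\bar p$ is held fixed, the pairing $\langle p,\dot y\rangle$ and the boundary terms are affine in $\mathbf{y}=(a_1,a_2,y)$ and contribute nothing; the terms $\phi_1(a_1)$ and $\phi_2(a_2)$ produce the Hessian blocks $D^2\phi_1(\bar y(1))$ and $D^2\phi_2(\bar y(2))$; and the running cost $\int_0^2\dot{\pi}(s,\tau)H(y,u,p)\,\d s$ produces $\dot{\pi}(\cdot,\bar\tau)D^2H$ in the $(y,u)$-block together with the mixed blocks $\dot{\pi}_\tau DH$ coupling $\tau$ with $(y,u)$. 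Here I would use that $\dot{\pi}(s,\tau)$ is affine in $\tau$, so $\dot{\pi}_{\tau\tau}=0$ and the pure $\tau\tau$ block vanishes. Substituting the directional derivatives $z=S_u(\bar u,\bar\tau).v+S_\tau(\bar u,\bar\tau).\theta$ and $\hat z=S_u(\bar u,\bar\tau).\hat v+S_\tau(\bar u,\bar\tau).\hat\theta$, together with the identification $\mathbf{S}_u.v+\mathbf{S}_\tau.\theta=(z(1),z(2),z)$ from~\eqref{eqDerivativesS}, delivers the explicit expression.

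The remaining computations are routine bilinear-form bookkeeping; the only point demanding care is to track which argument of the symmetric form $D^2L$ receives the direction $(v,\theta)$ and which receives $(\hat v,\hat\theta)$, so that the two mixed terms emerge with the correct factors $\theta$ and $\hat\theta$ and with $DH$ evaluated along the companion direction $(\hat z,\hat v)$ and $(z,v)$ respectively. Once the vanishing of the $D^2\mathbf{S}$ contribution is secured, no regularity issue beyond the twice-differentiability of $L$ and of $\mathbf{S}$ (Lemma~\ref{lemma_diffSys}) enters the argument.
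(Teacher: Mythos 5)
Your proposal is correct and takes essentially the same route as the paper's own proof: both apply the second-order chain rule to the identity $J(u,\tau)=L(\mathbf{S}(u,\tau),u,\tau,\bar p)$ from Lemma~\ref{lemma_Ly}, annihilate the $L_{\mathbf{y}}\,D^2\mathbf{S}(\bar u,\bar\tau)$ remainder using $L_{\mathbf{y}}(\bar{\mathbf{y}},\bar u,\bar\tau,\bar p)=0$, and then read off the explicit expression from the block structure of $D^2L$ (Hessians of $\phi_1,\phi_2$, the $\dot\pi D^2H$ block, the mixed $\dot\pi_\tau DH$ blocks, and a vanishing $\tau\tau$ block). Your explicit lift $G:(u,\tau)\mapsto(\mathbf{S}(u,\tau),u,\tau)$ and the observation that $\dot\pi$ is affine in $\tau$ are merely expository elaborations of what the paper does implicitly via its matrix notation.
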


\begin{proof}
 Once again, the proposition follows directly from Lemma~\ref{lemma_Ly}. Applying the chain rule to~\eqref{lagrangianIsCost}, we obtain:
 \begin{eqnarray*}
  D^2{L}(\bar{u},\bar{\tau},\lambda) & = &
  \begin{pmatrix}
   \mathbf{S}_u^\ast(\bar{u},\bar{\tau}) & \I & 0 \\
   \mathbf{S}_\tau^\ast(\bar{u},\bar{\tau}) & 0 & 1
  \end{pmatrix}
  D^2 \mathbf{L}(\bar{\mathbf{y}},\bar{u},\bar{\tau},\bar{p})
  \begin{pmatrix}
   \mathbf{S}_u(\bar{u},\bar{\tau}) & \mathbf{S}_\tau(\bar{u},\bar{\tau}) \\
   \I & 0 \\
   0 & 1
  \end{pmatrix}
  +
  \mathbf{L}_{\mathbf{y}}(\bar{\mathbf{y}},\bar{u},\bar{\tau})  D^2 \mathbf{S}(\bar{u},\bar{\tau}).
 \end{eqnarray*}
 The term involving $D^2\mathbf{S}(\bar{u},\bar{\tau})$ vanishes, since $\mathbf{L}_{\mathbf{y}}(\bar{\mathbf{y}},\bar{u},\bar{\tau},\bar{p})= 0$. The explicit form follows from the compact relation below, where the notation of the variable $(\bar{\mathbf{y}},\bar{u},\bar{\tau},\bar{p})$ has been omitted, for a sake of clarity:
 \begin{eqnarray*}
  D^2 \mathbf{L} & = &
  \left(
  \begin{array}{cc|ccc}
   D^2 \phi_1(\bar{y}(1)) & 0 & 0 & 0 & 0 \\
   0 & D^2 \phi_2(\bar{y}(2)) & 0 & 0 & 0 \\ \hline
   0 & 0 & \mathbf{L}_{yy} & \mathbf{L}_{yu} & \mathbf{L}_{y\tau} \\
   0 & 0 & \mathbf{L}_{uy} & \mathbf{L}_{uu} & \mathbf{L}_{u\tau} \\
   0 & 0 & \mathbf{L}_{\tau y} & \mathbf{L}_{\tau u} & 0
  \end{array}
  \right).
 \end{eqnarray*}
 Denoting $\mathbf{z} = (z(1),z(2),z)$ and $\hat{\mathbf{z}} = (\hat{z}(1),\hat{z}(2),\hat{z})$, the partial derivatives above are formally given by
 \begin{eqnarray*}
  \left\langle
  \begin{pmatrix}
   \mathbf{L}_{yy} & \mathbf{L}_{yu} \\
   \mathbf{L}_{uy} & \mathbf{L}_{uu}
  \end{pmatrix} .
  \begin{pmatrix}
   z \\
   v
  \end{pmatrix} ; \begin{pmatrix} \hat{z} \\ \hat{v} \end{pmatrix}  \right\rangle_{\L^2(0,2;Y)';\L^2(0,2;Y)}
  & = & \int_0^2 \dot{\pi}(s,\bar{\tau}) D^2 H(\bar{y},\bar{u},\bar{p},\lambda)(s).
  \big( (z,v),(\hat{z},\hat{v}) \big) (s) \d s ,\\
  \left\langle
  \begin{pmatrix}
   \mathbf{L}_{\tau y} \\
   \mathbf{L}_{\tau u}
  \end{pmatrix};
  \begin{pmatrix}
   z \\
   v
  \end{pmatrix}
  \right\rangle_{\L^2(0,2;Y)';\L^2(0,2;Y)}
  & = & \int_0^2 \dot{\pi}_\tau DH(\bar{y},\bar{u},\bar{p},\lambda)(s). ( z, v ) (s) \d s.
 \end{eqnarray*}
 So the proof is complete.
\end{proof} 

\subsection{Optimality conditions}

\textcolor{black}{
 We give in this subsection necessary and sufficient optimality conditions. They involve the first and second-order derivatives of $L$, which have been calculated previously.
 First we introduce some notation. For $(v,\theta) \in \L^2(0,2;U)\times \R$, we denote
 \begin{eqnarray*}
  \| (v,\theta) \| & = & \left(\| v \|_{\L^2(0,2;U)}^2 + \theta^2 \right)^{1/2}.
 \end{eqnarray*}
 Given $\varepsilon > 0$, we denote
 \begin{eqnarray*}
  B_\varepsilon(\bar{u},\bar{\tau})
  & = & \big\{ (u,\tau) \in \L^2(0,2;U) \times (0,T) \,:\, \| (u-\bar{u},\tau-\bar{\tau}) \| < \varepsilon \big\}.
 \end{eqnarray*}
}

\subsubsection{First-order optimality conditions}

\color{black}

\begin{proposition} \label{Prop3}
 If the pair $(\bar{u},\bar{\tau})$ is locally optimal, then there exists a unique $\bar \lambda \geq 0$ such that
 \begin{equation} \label{eqKKT}
 L_u(\bar{u},\bar{\tau},\bar{\lambda})= 0, \quad L_\tau(\bar{u},\bar{\tau},\bar{\lambda})= 0 \quad \text{and} \quad
 \bar{\lambda} G(\bar{u},\bar{\tau})= 0.
 \end{equation}
\end{proposition}

\begin{proof}
 If the constraint is not active (i.e.\@ $G(\bar{u},\bar{\tau}) < 0$), then the result is clearly satisfied with $\bar{\lambda}= 0$.
 If the constraint is active (i.e.\@ $G(\bar{u},\bar{\tau})= 0$), then $\bar{u} \neq 0$ and therefore $D_u G(\bar{u},\bar{\tau})$ is non-zero. As a consequence, the linear independence condition of qualification holds, which ensures the existence and uniqueness of a Lagrange multiplier $\bar{\lambda}$ satisfying the Karush-Kuhn-Tucker conditions \eqref{eqKKT}.
\end{proof}

\color{black}

\begin{remark}
 When the Hamiltonian is strongly uniformly convex with respect to $u$, the optimal control $\bar{u}$ inherits some regularity properties of $\bar{y}$ and $\bar{p}$. In the current framework, the optimal control is not continuous at time~$1$ (in general), because of the jump of the adjoint state.
\end{remark}

\subsubsection{Second-order: Case of an inactive constraint}

\begin{proposition}
 If $(\bar{u},\bar{\tau})$ is a solution to problem \eqref{pbmain2} such that $G(\bar{u},\bar{\tau}) < 0$,
 then for all $(v,\theta) \in \L^2(0,2;U)\times \R$,
 \begin{eqnarray*}
  D^2J(\bar{u},\bar{\tau}).(v,\theta)^2 & \leq & 0.
 \end{eqnarray*}
 Conversely, if $(\bar{u},\bar{\tau})$ is feasible, if $DJ(\bar{u},\bar{\tau})= 0$, and if there exists $\alpha > 0$ such that for all $(v,\theta) \in \L^2(0,2;U) \times \R$
 \begin{eqnarray} \label{eqCS2}
 D^2J(\bar{u},\bar{\tau}).(v,\theta)^2 & \leq & -\alpha \| (v,\theta) \|^2,
 \end{eqnarray}
 then $(\bar{u},\bar{\tau})$ is a local solution to problem \eqref{pbmain2}. More precisely, for all $\beta \in (0,\alpha)$, there exists $\varepsilon>0$ such that for all $(u,\tau) \in B_{\varepsilon}(\bar{u},\bar{\tau})$,
 \begin{eqnarray} \label{eqGrowth}
 J(u,\tau) & \leq & J(\bar{u},\bar{\tau}) - \frac{\beta}{2} \| (u-\bar{u}, \tau-\bar{\tau}) \|^2.
 \end{eqnarray}
\end{proposition}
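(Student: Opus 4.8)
The plan is to treat the necessary and the sufficient conditions separately, both by reducing to a one-dimensional Taylor argument and exploiting that $J$ is twice continuously differentiable (which follows from Lemma~\ref{lemma_diffSys}, the twice-differentiability of the Lagrangian $L$, and the explicit formulas of Propositions~\ref{propJ1} and~\ref{propJ2}).

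\emph{Necessary condition.} Fix $(v,\theta)\in\L^2(0,2;U)\times\R$ and consider the scalar function $g(t)=J(\bar{u}+tv,\bar{\tau}+t\theta)$, which is well defined and of class $C^2$ for $t$ near $0$ because $\bar{\tau}\in(0,T)$ is interior and $\L^2(0,2;U)$ is a vector space. If $(\bar{u},\bar{\tau})$ maximizes $J$, then $g$ has a local maximum at $t=0$, whence $g'(0)=DJ(\bar{u},\bar{\tau}).(v,\theta)=0$ and $g''(0)=D^2J(\bar{u},\bar{\tau}).(v,\theta)^2\le 0$ by the elementary second-order test in one variable; since the affine map $t\mapsto(\bar{u}+tv,\bar{\tau}+t\theta)$ has vanishing second derivative, $g''(0)$ is exactly $D^2J(\bar{u},\bar{\tau}).(v,\theta)^2$. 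As $(v,\theta)$ is arbitrary, this gives the claim.

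\emph{Sufficient condition.} Assume $DJ(\bar{u},\bar{\tau})=0$ and that \eqref{eqCS2} holds for some $\alpha>0$, and fix $\beta\in(0,\alpha)$. Writing $\delta u=u-\bar{u}$ and $\delta\tau=\tau-\bar{\tau}$, the starting point is the second-order Taylor expansion with integral remainder, valid since $J\in C^2$, which after using $DJ(\bar{u},\bar{\tau})=0$ reduces to
\[
J(u,\tau)-J(\bar{u},\bar{\tau}) = \int_0^1 (1-t)\, D^2J(\bar{u}+t\delta u,\bar{\tau}+t\delta\tau).(\delta u,\delta\tau)^2 \, \d t.
\]
The next step converts the pointwise coercivity at $(\bar{u},\bar{\tau})$ into a uniform one on a small ball: since $(u,\tau)\mapsto D^2J(u,\tau)$ is continuous as a bounded bilinear form, there is $\varepsilon>0$ with $\|D^2J(u',\tau')-D^2J(\bar{u},\bar{\tau})\|\le\alpha-\beta$ whenever $\|u'-\bar{u}\|_{\L^2(0,2;U)}+|\tau'-\bar{\tau}|\le\varepsilon$. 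Combined with \eqref{eqCS2}, this yields, for every direction $(w,\eta)$ and every such $(u',\tau')$,
\[
D^2J(u',\tau').(w,\eta)^2 \le -\alpha\big(\|w\|_{\L^2(0,2;U)}^2+\eta^2\big) + (\alpha-\beta)\big(\|w\|_{\L^2(0,2;U)}^2+\eta^2\big) = -\beta\big(\|w\|_{\L^2(0,2;U)}^2+\eta^2\big).
\]

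Finally, I would restrict to $(u,\tau)$ with $\|\delta u\|_{\L^2(0,2;U)}+|\delta\tau|\le\varepsilon$; by convexity the whole segment $\{(\bar{u}+t\delta u,\bar{\tau}+t\delta\tau):t\in[0,1]\}$ stays in the ball, so the uniform estimate applies inside the remainder integral and gives
\[
J(u,\tau)-J(\bar{u},\bar{\tau}) \le -\beta\big(\|\delta u\|_{\L^2(0,2;U)}^2+\delta\tau^2\big)\int_0^1(1-t)\,\d t = -\tfrac{\beta}{2}\big(\|\delta u\|_{\L^2(0,2;U)}^2+(\tau-\bar{\tau})^2\big),
\]
which is precisely \eqref{eqGrowth} and in particular exhibits $(\bar{u},\bar{\tau})$ as a strict local maximizer. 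I expect the main obstacle to be not the algebra but the functional-analytic regularity legitimizing the continuity of $D^2J$ in the $\L^2$ topology: Lemma~\ref{lemma_diffSys} delivers $C^2$-smoothness of the control-to-state map on $\L^\infty(0,2;U)\times(0,T)$, whereas both the coercivity \eqref{eqCS2} and the growth \eqref{eqGrowth} are measured in the $\L^2$ norm, so one must ensure the Hessian varies continuously with respect to the weaker norm in which coercivity is posed (the classical two-norm discrepancy). The assumptions $\mathbf{A0'}$--$\mathbf{A3'}$, formulated with the $\L^2$ control norm, are what make this step go through.
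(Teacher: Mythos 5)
Your proposal is correct and follows essentially the same route as the paper's own proof: for necessity, a one-dimensional reduction along the segment $t\mapsto(\bar{u}+tv,\bar{\tau}+t\theta)$ (the paper phrases it as a limit of second-order difference quotients, you as the classical 1D second-derivative test), and for sufficiency, a Taylor expansion at $(\bar{u},\bar{\tau})$ combined with continuity of $D^2J$ to propagate the coercivity \eqref{eqCS2} (with constant $\beta$ instead of $\alpha$) to a small $\L^2\times\R$ ball. The only differences are cosmetic --- you use the integral form of the Taylor remainder where the paper invokes the Lagrange form with an intermediate point $\gamma\in[0,1]$ --- and your closing caveat about the two-norm discrepancy is apt, since the paper's own estimate \eqref{eqContHessian} likewise presupposes continuity of $D^2J$ in the $\L^2$ topology.
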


\begin{proof}
 If $(\bar{u},\bar{\tau})$ is a solution to \eqref{pbmain2}, then for $\varepsilon>0$ small enough the pair $(\bar{u}+\varepsilon v,\bar{\tau}+ \varepsilon \theta)$ is feasible and therefore
 \begin{eqnarray*}
  0 & \geq & \lim_{\varepsilon \downarrow 0} \frac{J(\bar{u}+\varepsilon v, \bar{\tau}+ \varepsilon \theta)-J(\bar{u},\bar{\tau})}{\varepsilon^2}= \frac{1}{2}D^2 J(\bar{u},\bar{\tau})(v,\theta)^2,
 \end{eqnarray*}
 because $DJ(\bar{u},\bar{\tau})=0$.
 Conversely, assume that $DJ(\bar{u},\bar{\tau})=0$ and that \eqref{eqCS2} holds. Let $\varepsilon>0$ be such that for all $(u,\tau) \in B_{\varepsilon}(\bar{u},\bar{\tau})$, we have
 \begin{eqnarray} \label{eqContHessian}
 | (D^2J(u,\tau) - D^2J(\bar{u},\bar{\tau})).(v,\theta)^2 | & \leq & (\alpha-\beta)\| (v, \theta) \|^2.
 \end{eqnarray}
 By Taylor's Theorem, for all $(u,\tau) \in B_{\varepsilon}(\bar{u},\bar{\tau})$, there exists $\mu \in [0,1]$ such that
 \begin{eqnarray*}
  J(u,\tau) - J(\bar{u},\bar{\tau}) & = & \frac{1}{2}D^2J(\bar{u}+ \mu v, \bar{\tau}+ \mu \theta).(v,\theta)^2,
 \end{eqnarray*}
 where $v= u-\bar{u}$ and $\theta= \tau-\bar{\tau}$.
 \textcolor{black}{
  We obtain
  \begin{eqnarray*}
   J(u,\tau) - J(\bar{u},\bar{\tau})
   & \leq & \frac{1}{2}D^2J(\bar{u}, \bar{\tau}).(v,\theta)^2
   + \frac{1}{2} \big| \big( D^2J(\bar{u}, \bar{\tau}) - D^2J(\bar{u}+ \mu v, \bar{\tau}+ \mu \theta) \big).(v,\theta)^2 \big| \\
   & \leq &  - \frac{1}{2}\alpha \| (v,\theta) \|^2 + \frac{1}{2}(\alpha-\beta) \| (v,\theta) \|^2
   = -\frac{1}{2} \beta \| (v,\theta) \|^2,
  \end{eqnarray*}
  which completes the proof.}
\end{proof}

\color{black}

\subsubsection{Second-order analysis: Case of an active constraint with strict complementarity.}

All along the paragraph, the pair $(\bar{u},\bar{\tau})$ is assumed to be such that the first-order necessary optimality condition of Proposition~\ref{Prop3} holds with $\bar{\lambda} > 0$ and $G(\bar{u},\bar{\tau})= 0$. We do not consider the case of a null Lagrange multiplier with an active constraint. We define
\begin{eqnarray*}
 w(\cdot)= 2 \dot{\pi}(\cdot,\bar{\tau})\bar{u}(\cdot) \in \L^2(0,2;U) & \text{and} &
 r= \int_0^2 \dot{\pi}_{\tau}(s) \| \bar{u}(s) \|_U^2 \d s,
\end{eqnarray*}
so that
\begin{equation*}
DG(\bar{u},\bar{\tau}).(v,\theta)= \langle w;v \rangle_{\L^2(0,2;U)} + r\theta.
\end{equation*}
Observe that $\bar{u} \neq 0$, because $G(\bar{u},\bar{\tau})= 0$, and thus $w \neq 0$.
We also set
\begin{equation*}
(\bar{w},\bar{r})= \frac{(w,r)}{\|(w,r) \|} \in \L^2(0,2;U) \times \R.
\end{equation*}
and define a linear form $\zeta: \L^2(0,2;U) \times \R \rightarrow \R$ as follows:
\begin{eqnarray*}
 \zeta(v,\theta) & = & \langle \bar{w};v \rangle_{\L^2(0,2;U)} + \bar{r} \theta.
\end{eqnarray*}
The first-order optimality condition reads
\begin{equation} \label{eq:astractKKT}
DJ(\bar{u},\bar{\tau}) - \bar{\lambda} \| (w,r) \| \zeta= 0.
\end{equation}
For all $\varepsilon \geq 0$, we define the cone $\mathcal{C}_\varepsilon$ as follows:
\begin{equation*}
\mathcal{C}_\varepsilon = \big\{ (v,\theta) \in \L^2(0,2;U) \times \R \,:\, |\zeta(v,\theta)| \leq \varepsilon \| (v,\theta) \| \big\}.
\end{equation*}
The cone $\mathcal{C}_0$ is called critical cone.
The following lemma is a metric regularity property.
In the prooofs of the following results, $C$ is used as a generic constant.

\begin{lemma} \label{lemmaRegM}
 There exist $\varepsilon>0$, a constant $\bar{C}>0$, and a twice continuously differentiable mapping $\varphi\colon (u,\tau) \in B_{\varepsilon}(\bar{u},\bar{\tau}) \rightarrow \L^2(0,2;U) \times (0,T)$ such that, for all $(u,\tau) \in B_{\varepsilon}(\bar{u},\bar{\tau})$,
 \begin{eqnarray}
 G \big(\varphi(u,\tau)\big)= 0 & \text{and} &
 \big\| \varphi(u,\tau) -(u,\tau) + \zeta(u-\bar{u},\tau-\bar{\tau})(\bar{w},\bar{r}) \big\| \leq \bar{C} \| (u-\bar{u},\tau-\bar{\tau}) \|^2. \label{eqRegM1}
 \end{eqnarray}
\end{lemma}

\begin{proof}
 Consider the following twice continuously differentiable mapping:
 \begin{eqnarray} \label{eqRegM0}
 \chi\colon (u,\tau,\pi) \in \L^2(0,2;U) \times (0,T) \times \R & \mapsto & G\big( (u,\tau) + \pi(\bar{w},\bar{r}) \big) \in \R.
 \end{eqnarray}
 We have: $\chi(\bar{u},\bar{\tau},0)= 0$, $\chi_\pi(\bar{u},\bar{\tau},0)= \langle w;\bar{w} \rangle_{\L^2(0,2;U)} + r\bar{r} = \| (w,r) \| \neq 0$. Therefore, by the implicit function theorem, the non-linear equation $\chi(u,\tau,\pi)= 0$ with unknown $\pi \in \R$ possesses a solution for $(u,\tau)$ close enough to $(\bar{u},\bar{\tau})$. More precisely, there exist $\varepsilon > 0$ and a twice continuously differentiable mapping $\Pi\colon B_{\varepsilon}(\bar{u},\bar{\tau}) \rightarrow 0$ such that $\Pi(\bar{u},\bar{\tau})= 0$ and such that
 \begin{equation} \label{eqRegM2}
 G\big( (u,\tau) + \Pi(u,\tau)(\bar{w},\bar{r}) \big)= 0, \quad \forall (u,\tau) \in B_{\varepsilon}(\bar{u},\bar{\tau}).
 \end{equation}
 Differentiating \eqref{eqRegM2} with respect to $(u,\tau)$ at $(\bar{u},\bar{\tau},0)$, we obtain
 \begin{equation*}
 D \Pi(\bar{u},\bar{\tau}).(v,\theta)= - \zeta(v,\theta), \quad \forall (v,\theta) \in \L^2(0,2;U) \times \R.
 \end{equation*}
 Since $\Pi$ is twice continuously differentiable, there exists a constant $C$ such that for all $(u,\tau) \in B_{\varepsilon}(\bar{u},\bar{\tau})$,
 \begin{eqnarray*}
  | \Pi(u,\tau)- \underbrace{\Pi(\bar{u},\bar{\tau})}_{=0} - D\Pi(\bar{u},\bar{\tau})(u-\bar{u},\tau-\bar{\tau}) |
  & \leq & C \| (u-\bar{u},\tau-\bar{\tau}) \|^2.
 \end{eqnarray*}
 Therefore, there exists a constant $C$ such that for all $(u,\tau) \in B_{\varepsilon}(\bar{u},\bar{\tau})$,
 \begin{eqnarray} \label{eqRegM3}
 \| \Pi(u,\tau)(\bar{w},\bar{r}) +\zeta(u-\bar{u},\tau-\bar{\tau}) \|
 & \leq & C \| (u-\bar{u},\tau-\bar{\tau}) \|^2.
 \end{eqnarray}
 We finally define $\varphi \colon B_{\varepsilon}(\bar{u},\bar{\tau}) \rightarrow \L^2(0,2;U) \times (0,T)$ by
 \begin{equation*}
 \varphi(u,\tau)= (u,\tau) + \Pi(u,\tau) (\bar{w},\bar{r}).
 \end{equation*}
 The two properties specified in \eqref{eqRegM0} follow directly from \eqref{eqRegM2} and \eqref{eqRegM3}.
\end{proof}

\begin{proposition}
 Let $(\bar{u},\bar{\tau})$ be locally optimal. Furthermore, assume that $G(\bar{u},\bar{\tau})= 0$.
 Then,
 \begin{equation*}
 D^2 L(\bar{u},\bar{\tau},\bar{\lambda}). \big( (v,\theta),(v,\theta) \big) \leq 0, \quad
 \forall (v,\theta) \in \mathcal{C}_0.
 \end{equation*}
\end{proposition}

\begin{proof}
 Let $(v,\theta) \in \mathcal{C}_0$. Let $(\varepsilon_k)_{k \in \mathbb{N}} \downarrow 0$. Set $(u_k,\tau_k)= (\bar{u},\bar{\tau}) + \varepsilon_k (v,\theta)$.
 For $k$ large enough, Lemma \ref{lemmaRegM} applies and therefore,
 \begin{equation} \label{eq:CN2add}
 \| \varphi (u_k,\tau_k) - (u_k,\tau_k) \| \leq C \varepsilon_k^2,
 \end{equation}
 since $\zeta(u_k-\bar{u},\tau_k-\bar{\tau})= \varepsilon_k \zeta(v,\theta)= 0$.
 We have $\varphi(u_k,\tau_k) \rightarrow (\bar{u},\bar{\tau})$, thus together with the feasibility of $\varphi(u_k,\tau_k)$, for $k$ large enough,
 \begin{equation*}
 0 \leq J \big( \varphi(u_k,\tau_k) \big) -J(\bar{u},\bar{\tau}).
 \end{equation*}
 Since $G(\bar{u},\bar{\tau})= 0$ and since $G(\varphi(u_k,\tau_k))= 0$ for all $k \in \mathbb{N}$,
 \begin{equation*}
 0 \leq J \big( \varphi(u_k,\tau_k) \big) -J(\bar{u},\bar{\tau})=
 L(\varphi(u_k,\tau_k),\bar{\lambda}) - L(\bar{u},\bar{\tau},\bar{\lambda}),
 \end{equation*}
 for $k$ large enough.
 The Lagrangian $L$ is twice continuously differentiable and $DL(\bar{u},\bar{\tau})= 0$, and so there exists $\mu_k \in [0,1]$ such that
 \begin{equation} \label{eqCN2}
 0 \leq \frac{1}{\varepsilon_k^2} \big( L(\varphi(u_k,\tau_k),\bar{\lambda}) - L(\bar{u},\bar{\tau},\bar{\lambda}) \big)=  D^2 L(\tilde{u}_k,\tilde{\tau}_k,\lambda). \left( \frac{\varphi(u_k,\tau_k)-(\bar{u},\bar{\tau})}{\varepsilon_k}, \frac{\varphi(u_k,\tau_k)-(\bar{u},\bar{\tau})}{\varepsilon_k} \right),
 \end{equation}
 where $(\tilde{u}_k,\tilde{\tau}_k)= \mu_k (\bar{u},\bar{\tau}) + (1-\mu_k) (u_k,\tau_k)$.
 Using \eqref{eq:CN2add}, we obtain that:
 \begin{equation*}
 \left\| \frac{\varphi(u_k,\tau_k)-(\bar{u},\bar{\tau})}{\varepsilon_k} - (v,\theta) \right\|
 = \frac{1}{\varepsilon_k} \| \varphi(u_k,\tau_k)-(u_k,\tau_k) \| \leq C \varepsilon_k
 \underset{k \to \infty}{\longrightarrow} 0.
 \end{equation*}
 Moreover, $(\tilde{u}_k,\tilde{\tau}_k) \underset{k \to \infty}{\longrightarrow} (\bar{u},\bar{\tau})$, therefore, we can pass to the limit in \eqref{eqCN2}. We obtain
 \begin{equation*}
 0 \leq D^2 L(\bar{u},\bar{\tau},\bar{\lambda}). (v,\theta)^2,
 \end{equation*}
 which concludes the proof.
\end{proof}

Consider now the following sufficient second-order optimality condition: There exists $\alpha > 0$ such that
\begin{eqnarray} \label{eqCS2assumption}
D^2L(\bar{u},\bar{\tau},\bar{\lambda}).(v,\theta)^2
\leq -\alpha \| (v,\theta) \|^2, & & \forall (v,\theta) \in \mathcal{C}_0.
\end{eqnarray}

\begin{lemma} \label{lemmaCritcone}
 Assume that the sufficient second-order optimality condition \eqref{eqCS2assumption} holds. Then, for all $0 < \beta < \alpha$, there exists $\varepsilon>0$ such that
 \begin{eqnarray*}
  D^2L(\bar{u},\bar{\tau},\bar{\lambda})(v,\theta)^2
  \leq -\beta \| (v,\theta) \|^2, & &
  \forall (v,\theta) \in \mathcal{C}_\varepsilon.
 \end{eqnarray*}
\end{lemma}

\begin{proof}
 To simplify the notation, we write $D^2L$ instead of $D^2L(\bar{u},\bar{\tau},\bar{\lambda})$.
 Let $\varepsilon > 0$ and let $(v,\theta) \in \mathcal{C}_\varepsilon$. Let us set $(v',\theta')= \zeta(v,\theta) (\bar{w},\bar{r})$.
 We have
 \begin{eqnarray*} 
  (v,\theta) - (v',\theta') & \in & \mathcal{C}_0,
 \end{eqnarray*}
 since $\zeta(\bar{w},\bar{r})= 1$. Therefore,
 \begin{eqnarray}
 D^2L. (v-v',\theta-\theta')^2
 & \leq & -\alpha \| (v-v',\theta-\theta') \|^2 \notag \\
 & = & -\alpha \big( \| (v,\theta) \|^2 + \| (v',\theta') \|^2 - 2\langle v;v' \rangle_{\L^2(0,2;U)} - \theta \theta' \big) \notag \\
 & = & -\alpha \big( \| (v,\theta) \|^2 - \zeta(v,\theta)^2 \big) \notag \\
 & \leq & (-\alpha + \alpha \varepsilon^2) \| (v,\theta) \|^2. \label{eqFormQuad1}
 \end{eqnarray}
 We also have
 \begin{eqnarray}
 D^2 L. (v-v',\theta-\theta')^2
 & = & D^2L. (v,\theta)^2
 - 2 D^2L. \big( (v,\theta),(v',\theta') \big)
 + D^2 L. (v',\theta')^2 \notag \\
 & = & D^2 L. (v,\theta)^2
 -2 \zeta(v,\theta) D^2L. \big( (v,\theta),(\bar{w},\bar{r}) \big)
 + \zeta(v,\theta)^2 D^2L. (\bar{w},\bar{r})^2 \notag \\
 & \geq & D^2 L.(v,\theta)^2 - C(\varepsilon + \varepsilon^2) \| (v,\theta) \|^2.
 \label{eqFormQuad2}
 \end{eqnarray}
 Combining \eqref{eqFormQuad1} and \eqref{eqFormQuad2}, we obtain
 \begin{eqnarray*}
  D^2 L. \big( (v,\theta),(v,\theta) \big)
  & \leq & (-\alpha + C\varepsilon + C \varepsilon^2) \| (v,\theta) \|^2.
 \end{eqnarray*}
 For any $\beta \in (0,\alpha)$, there exists $\varepsilon>0$ small enough so that $-\alpha + C\varepsilon + C \varepsilon^2 \leq -\beta$. The lemma is proved.
\end{proof}

\begin{proposition}
 Assume that the sufficient second-order optimality condition \eqref{eqCS2assumption} holds and that $\bar{\lambda} > 0$.
 Then, for all $\beta \in (0,\alpha)$, there exists $\varepsilon>0$ such that for all $(u,\tau) \in B_{\varepsilon}(\bar{u},\bar{\tau})$, if $G(u,\tau) \leq 0$, then
 \begin{eqnarray*}
  J(u,\tau)-J(\bar{u},\bar{\tau}) & \leq &
  -\frac{1}{2} \beta  \| (u-\bar{u},\tau-\bar{\tau}) \|^2.
 \end{eqnarray*}
\end{proposition}

\begin{proof}
 We prove the result by contradiction. If such an $\varepsilon>0$ does not exist, then there exists a convergent sequence $(u_k,\tau_k)_{k \in \mathbb{N}}$ with limit $(\bar{u},\bar{\tau})$ such that for all $k \in \mathbb{N}$, $G(u_k,\tau_k) \leq 0$ and such that
 \begin{eqnarray} \label{eqCS22}
 J(u_k,\tau_k)-J(\bar{u},\bar{\tau}) & > &
 -\frac{1}{2} \beta  \| (u_k-\bar{u},\tau_k-\bar{\tau}) \|^2.
 \end{eqnarray}
 Inequality \eqref{eqCS22} implies that $(u_k,\tau_k) \neq (\bar{u},\bar{\tau})$. We set
 \begin{equation*}
 (v_k,\theta_k)= \frac{(u_k-\bar{u},\tau_k-\bar{\tau})}{\| (u_k-\bar{u},\tau_k-\bar{\tau}) \|}.
 \end{equation*}
 Since $J$ is twice continuously differentiable and since $(u_k,\tau_k)_{k \in \mathbb{N}}$ is a bounded sequence, there exists a constant $C > 0$ such that
 \begin{eqnarray}
 J(u_k,\tau_k)-J(\bar{u},\bar{\tau})
 & \leq & DJ(\bar{u},\bar{\tau})(u_k-\bar{u},\tau_k-\bar{\tau}) + C \| (u_k-\bar{u},\tau_k-\bar{\tau}) \|^2 \notag \\
 & = & \bar{\lambda} \| (w,r) \| \zeta(v_k,\theta_k) \| (u_k-\bar{u},\tau_k-\bar{\tau} ) \| + C \| (u_k-\bar{u},\tau_k-\bar{\tau}) \|^2, \label{eqCS2a}
 \end{eqnarray}
 where we used \eqref{eq:astractKKT}.
 Since $\bar{\lambda} >0$, we obtain by combining \eqref{eqCS22} and \eqref{eqCS2a} that
 \begin{equation*}
 \zeta(v_k,\theta_k) \geq -C \| (u_k-\bar{u},\tau_k-\bar{\tau}) \| \underset{k \to \infty}{\longrightarrow} 0.
 \end{equation*}
 Thus, $\liminf_{k \to \infty} \zeta(v_k,\theta_k) \geq 0$.
 Since $(u_k,\tau_k)$ is bounded and feasible and since $G$ is twice continuously differentiable, there exists a constant $C>0$ such that for all $k \in \mathbb{N}$,
 \begin{equation*}
 0 \geq G(u_k,\tau_k)-G(\bar{u},\bar{\tau})
 = \| (w,r) \| \zeta(v_k,\theta_k) \| (u_k-\bar{u},\tau_k-\bar{\tau}) \| - C \| (u_k-\bar{u},\tau_k-\bar{\tau}) \|^2.
 \end{equation*}
 Therefore,
 \begin{equation*}
 \zeta(v_k,\theta_k) \leq C \| (u_k-\bar{u},\tau_k-\bar{\tau}) \| \underset{k \to \infty}{\longrightarrow} 0.
 \end{equation*}
 Thus, $\limsup_{k \to \infty} \zeta(v_k,\theta_k) \leq 0$ and finally, $\zeta(v_k,\theta_k) \underset{k \to \infty}{\longrightarrow} 0$.
 
 We are ready to obtain a contradiction to \eqref{eqCS22}. We have $G(\bar{u},\bar{\tau})= 0$. Moreover, for all $k \in \mathbb{N}$, $G(u_k,\tau_k) \leq 0$. Thus
 \begin{equation} \label{eqCS2A}
 J(u_k,\tau_k)-J(\bar{u},\bar{\tau})
 \leq L(u_k,\tau_k,\bar{\lambda}) - L(\bar{u},\bar{\tau},\bar{\lambda}).
 \end{equation}
 Since $L$ is twice continuously differentiable and since $DL(\bar{u},\bar{\tau},\bar{\lambda})= 0$, there exists $\mu_k \in [0,1]$ such that
 \begin{eqnarray}
 L(u_k,\tau_k,\bar{\lambda}) - L(\bar{u},\bar{\tau},\bar{\lambda})
 & = & \frac{1}{2} \| (u_k-\bar{u},\tau_k-\bar{\tau}) \|^2
 D^2 L(\tilde{u}_k,\tilde{\tau}_k,\bar{\lambda}). (v_k,\theta_k)^2,
 \label{eqCS2B}
 \end{eqnarray}
 where $(\tilde{u}_k,\tilde{\tau}_k)= (1-\mu_k) (\bar{u},\bar{\tau}) + \mu_k (u_k,\tau_k)$.
 Observe that $(\tilde{u}_k,\tilde{\tau}_k) \underset{k \to \infty}{\longrightarrow} (\bar{u},\bar{\tau})$.
 We introduce now a number $\eta \in (\beta, \alpha)$.
 For $k$ large enough, $\| D^2 L(\tilde{u}_k,\tilde{\tau}_k,\bar{\lambda}) - D^2L(\bar{u},\bar{\tau},\bar{\lambda}) \| \leq \eta-\beta$. Thus, combining \eqref{eqCS2A} and \eqref{eqCS2B}, we obtain
 \begin{eqnarray} \label{eqCS2C}
 J(u_k,\tau_k)-J(\bar{u},\bar{\tau})
 & \leq & \frac{1}{2} \|(u_k-\bar{u},\tau_k-\bar{\tau})\|^2 \left( D^2L(\bar{u},\bar{\tau},\bar{\lambda})(v_k,\theta_k)^2 + \eta - \beta \right).
 \end{eqnarray}
 By Lemma \ref{lemmaCritcone}, there exists $\varepsilon > 0$ such that
 \begin{eqnarray*}
  D^2 L(\bar{u},\bar{\tau},\bar{\lambda}).(v,\theta)^2 \leq -\eta \| (v,\theta) \|^2, & &
  \forall (v,\theta) \in \mathcal{C}_{\varepsilon}.
 \end{eqnarray*}
 For $k$ large enough, $\zeta(v_k,\theta_k) \in \mathcal{C}_{\varepsilon}$ since $\zeta(v_k,\theta_k) \rightarrow 0$.
 Therefore, by \eqref{eqCS2C},
 \begin{equation*}
 J(u_k,\tau_k)-J(\bar{u},\bar{\tau})
 \leq -\frac{1}{2} \beta \| (u_k-\bar{u},\tau_k-\bar{\tau}) \|^2,
 \end{equation*}
 for $k$ large enough, which contradicts \eqref{eqCS22}.
\end{proof}

\color{black}

\section{Numerical realization} \label{sec4}

\subsection{Method} \label{secpractice}
\textcolor{black}{ The numerical realization is based  on a optimize then discretize approach using  Newton's method for the reduced
 formulation. For this purpose we need the first and second order  sensitivity information  which was obtained in Propositions \ref{propJ1} and \ref{propJ2},  as well as the optimality conditions of Proposition~\ref{Prop3}.} For ease of computations we did not realize numerically the norm constraint, and thus the expressions of the derivatives in Propositions~\ref{propJ1} and~\ref{propJ2} hold for $J$, as well as the optimality conditions of Proposition~\ref{Prop3} (with $\lambda = 0$).
In order to reach  the region  of attraction  for  Newton's
method, first  some  Barzilai-Borwein gradient steps are performed
(see~\cite{BB} for instance), \textcolor{black}{and these steps are initialized with the Armijo rule}. In these gradient steps, only two inner
products are computed for each step. Next, when the norm of the gradient
is small enough, we switch to the full-step Newton algorithm, which is
faster, but demands one linear system solve at
each iteration. Switching to the Newton step is monitored by the norm
of the gradient given by
\begin{eqnarray*}
 ||| (J_u,J_\tau) ||| : = \| (J_u,J_\tau) \|_{\L^2(0,2;\R^m) \times
  \R}^2 & = & \int_0^2 |J_u|^2_{\R^m}(s)\d s + |J_\tau|_{\R}^2.
\end{eqnarray*}
The integral above is approximated by the trapezoidal rule. The
algorithm then performed is the following:

\begin{algorithm}[htpb] 
 \begin{description}
  \item[Initialization:] $u = 0$, $\tau_0 = T/2$, for $1\leq i\leq N$, $s(i) = 2i/N$.
  \item[\textcolor{black}{Initial gradient:}] \textcolor{black}{Compute $(J_u,J_\tau)$ corresponding to the initial data above.}
  \item[Steps for vanishing the gradient:] \hfill
  \begin{description}
   \item[Gradient steps:] \textcolor{black}{Compute another $(J_u,J_\tau)$ with the Armijo rule.}
   \item[Barzilai-Borwein steps:] While $||| (J_u,J_\tau) ||| > 10^{-4}$, \textcolor{black}{perform this gradient method.}
   \item[Newton steps:] While $||| (J_u,J_\tau) ||| > 10^{-12}$, do:\\
   $\bullet$ Compute $(\delta u, \delta \tau)$ by solving system~\eqref{sysNewton}.\\
   $\bullet$ Update the unknowns: $u_{k+1} = u_k + \delta u$, $\tau_{k+1} = \tau_k + \delta \tau$.\\
   $\bullet$ Update the gradient $(J_u,J_\tau)$.
  \end{description}
  \item[Post-processing:] for $1\leq i\leq N$, $t(i) := \pi(s(i),\tau)$.
 \end{description}
 \caption{Solving the first-order optimality conditions}\label{algo}
\end{algorithm}
\FloatBarrier

Recall that $J$ is defined in~\eqref{pbmain2}. The derivatives of $J$ are provided by Proposition~\eqref{propJ1} and Proposition~\eqref{propJ2}. For solving one Newton step, we use the Gmres algorithm~\cite{GMRES}, calling only the evaluation of the mapping\textcolor{black}{
 \begin{eqnarray*}
  \left(\begin{matrix} v \\ \theta \end{matrix}  \right) & \mapsto &
  \left(\begin{matrix}J_{uu} & J_{u\tau} \\ J_{\tau u} & J_{\tau\tau} \end{matrix}  \right) .\left(\begin{matrix} v \\ \theta \end{matrix}  \right),
 \end{eqnarray*}
 in order to solve the system
 \begin{eqnarray}
 \left(\begin{matrix}J_{uu} & J_{u\tau} \\ J_{\tau u} & J_{\tau\tau} \end{matrix}  \right) .\left(\begin{matrix} \delta u \\ \delta \tau \end{matrix}  \right)
 & = & - \left(\begin{matrix}  J_u \\  J_{\tau} \end{matrix}  \right).
 \label{sysNewton}
 \end{eqnarray}
 This leads in particular, for $(u,\tau)$ and $y = S(u,\tau)$ given, to} the evaluation of the following quantities:
\begin{eqnarray*}
 J_{uu}.(\delta u, \cdot) & = & \dot{\pi}(\cdot,\tau) \Big( S_u^\ast H_{yy} S_u.\delta u + H_{uy}S_u.\delta u + S_u^\ast H_{yu}.\delta u + H_{uu}.\delta u \Big) \\
 & & + S_u^\ast D^2\phi_1(y(1))S_u.\delta u + S_u^\ast D^2\phi_2(y(2))S_u.\delta u , \\
 J_{\tau u} & = & \dot{\pi}(\cdot,\tau) \Big(  S_u^\ast H_y + H_u + H_{uy}S_\tau + S^\ast_u H_{yy}S_\tau \Big) +  S_u^\ast D^2\phi_1(y(1))S_\tau +  S_u^\ast D^2\phi_2(y(2))S_\tau.
\end{eqnarray*}
Note, in particular, that we do not need any evaluation of the adjoint operator $S_\tau^{\ast}$.

\subsection{Illustrations}
In all the tests below, for the cost function we choose
\begin{eqnarray*}
 \ell(y,u)  =  -\frac{\alpha}{2} |u|_{\R^m}^2,
\end{eqnarray*}
for a cost parameter $\alpha >0$. The examples dealing with ordinary differential systems, considered below, are inspired by~\cite{Trelat}. \textcolor{black}{The time discretizations, for solving the state equations as well as for solving the adjoint states, are made with the Crank-Nicolson scheme. This is a second-order scheme, which is important for getting a good accuracy for the time evolution, in particular for coupled systems whose the dynamics is complex.}

\subsubsection{The Lotka-Volterra prey-predator system}
Consider the following differential system:
\begin{eqnarray*}
 \left( \begin{matrix} \dot{y}_1 \\ \dot{y}_2 \end{matrix} \right) & = &
 \left( \begin{matrix} (y_1(a-by_2) + \text{\textcolor{black}{$u_1$}}y_1)(1-c_1y_1) \\ (y_2(qy_1-r)+  \text{\textcolor{black}{$u_2$}}y_2)(1-c_2y_2) \end{matrix} \right).
\end{eqnarray*}
In this system the variables $y_1$ and $y_2$ represent the densities of population of preys and predators, respectively. The multiplicative terms of type $(1-c_iy_i)$ are considered in order to limit the values of these densities to $1/c_i$. We assume that we can control the birth rates and death rates of both species, through the bilinear control made of $u_1$ and $u_2$. The numerical method given in section~\ref{secpractice} is applied to this system, with
\begin{eqnarray*}
 T = 30.0, \quad y_0 = (1.0,2.0)^T, \quad \alpha = 10.0, \quad a=0.3, \quad b=0.1, \quad r=0.2, \quad q=0.1, \quad c_1 =c_2 = 0.05.
\end{eqnarray*}
We do not consider any terminal cost, namely $\phi_2 \equiv 0$, and the functional we maximize at some time $\tau$ is $\phi_1(y) = y_2$. It means that we want to maximize the density of population of predators. The tests presented in Figures~\ref{figLV} and~\ref{figULV} are obtained with a Crank-Nicolson time-discretization, with $N = 3000$ time steps. 

\begin{minipage}{\linewidth}
\centering
\begin{minipage}{0.45\linewidth}
\includegraphics[trim = 0cm 0cm 0cm 1cm, clip, scale=0.25]{./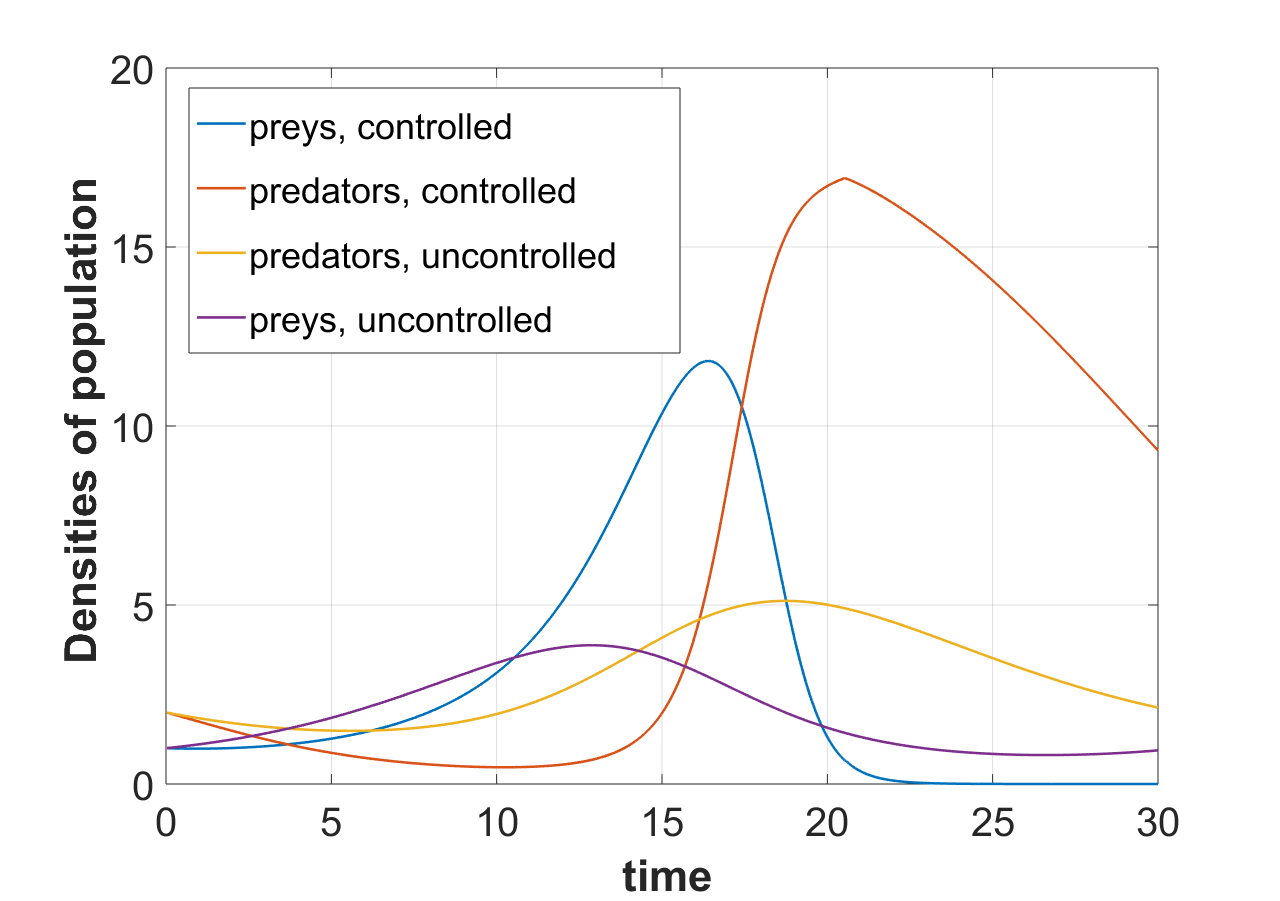}
\captionof{figure}{Comparison of the evolutions of states for the Lotka-Volterra system, with and without control.\label{figLV}}
\end{minipage}
\hspace{0.05\linewidth}
\begin{minipage}{0.45\linewidth}
\includegraphics[trim = 0cm 0cm 0cm 1cm, clip, scale=0.25]{./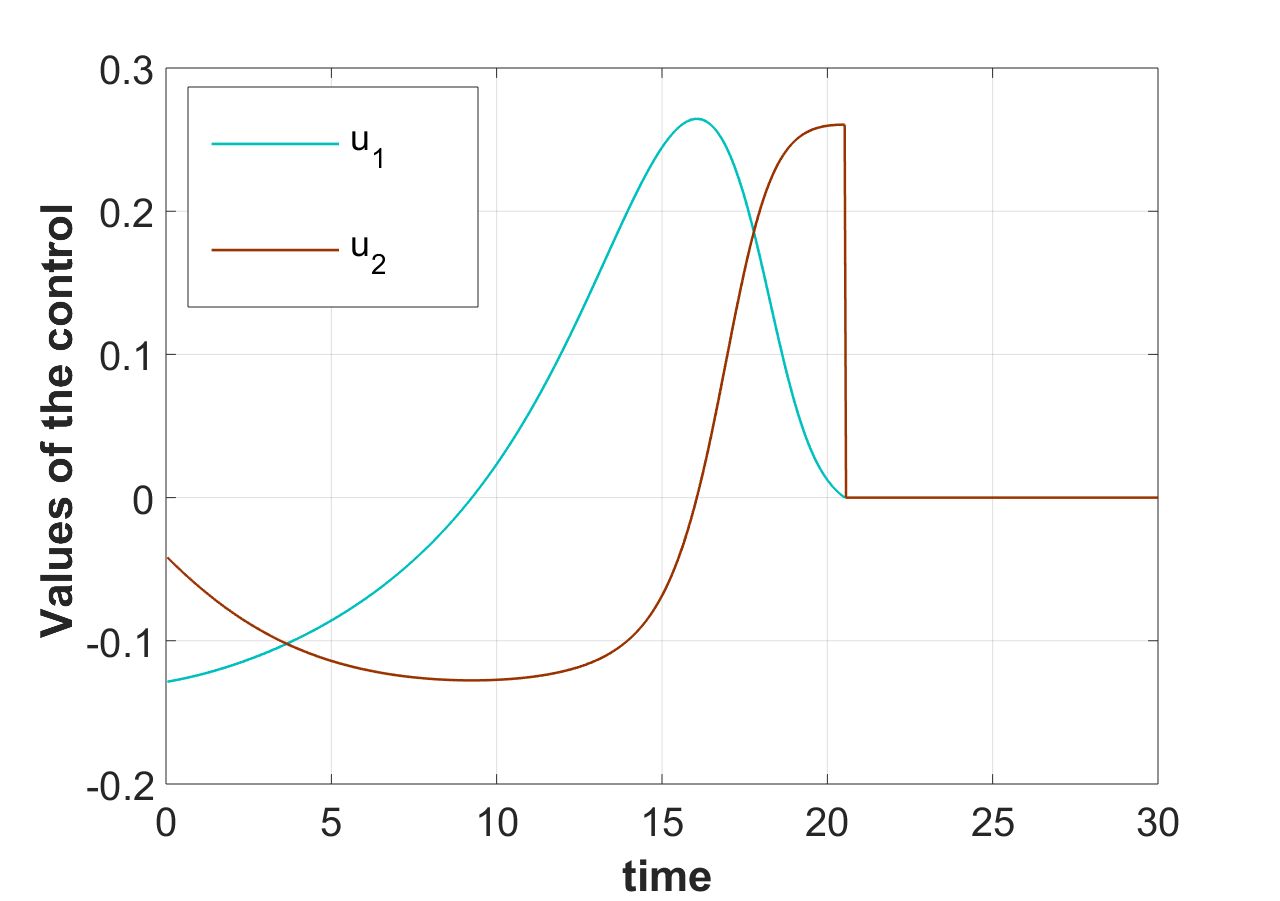}
\captionof{figure}{Values of the bilinear control through the time. \textcolor{white}{blablablablablablablablablablallllllllllllllllll}\label{figULV}}
\end{minipage}
\end{minipage}\\
\hfill \\
\FloatBarrier
At the beginning, the action of the control leads to diminution of the populations of both preys and predators, then leads to introduction of preys in order to feed the predators, and then favor their reproduction. The maximum of predators is reached at $\tau \approx 20.57$. As expected, the time-derivative of the second-component of the state has a jump. Note that in this example the first-order optimality condition gives in particular the equalities
\begin{eqnarray*}
 \alpha u_1 = y_1(1-c_1y_1)p_1, & & \alpha u_2 = y_2(1-c_2y_2)p_2.
\end{eqnarray*}
Without any terminal cost, the control is indeed null for $t > \tau$, as expected in system~\eqref{sysadj} defining the adjoint-state $(p_1,p_2)^T$.

\paragraph{With terminal cost.}
In order to avoid the extinction of population of the preys after having maximized the population of predators, we consider the terminal cost functional
\begin{eqnarray*}
 \phi_2(y) & = & -\beta \log \left( \left|\frac{y_1}{y_{\mathrm{des}}}\right| \right)^2,
\end{eqnarray*}
where $\beta > 0$ is a coefficient chosen large enough and $y_{\mathrm{des}}$ is the desired value for the density of preys at time $t=T$. The idea is to penalize the extinction of this population (the case $y_1(T) = 0$ is forbidden), and to force to reach the desired value by choosing $\beta$ large enough. With the same coefficients chosen as previously, the same time-discretization, and with $\beta = 25.0$ and $y_{\mathrm{des}} = 1.0$, we obtain the results presented in Figures~\ref{figterm1} and~\ref{figterm2}.\\

\begin{minipage}{\linewidth}
 \centering
 \begin{minipage}{0.45\linewidth}
   \includegraphics[trim = 0cm 0cm 0cm 1cm, clip, scale=0.25]{./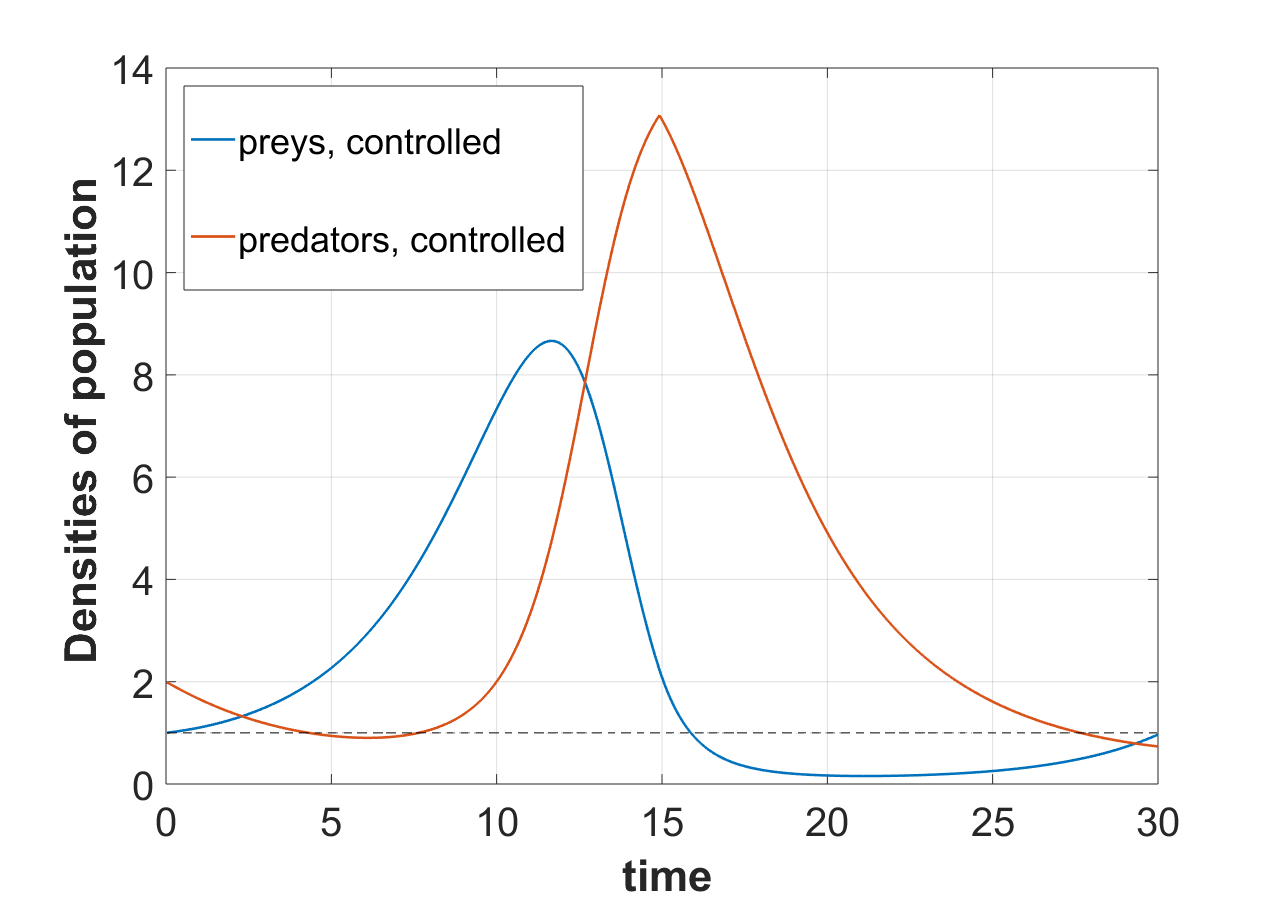}
   \captionof{figure}{Evolutions of the state for the Lotka-Volterra system, with control and terminal cost.\label{figterm1}}
 \end{minipage}
 \hspace{0.05\linewidth}
 \begin{minipage}{0.45\linewidth}
   \includegraphics[trim = 0cm 0cm 0cm 1cm, clip, scale=0.25]{./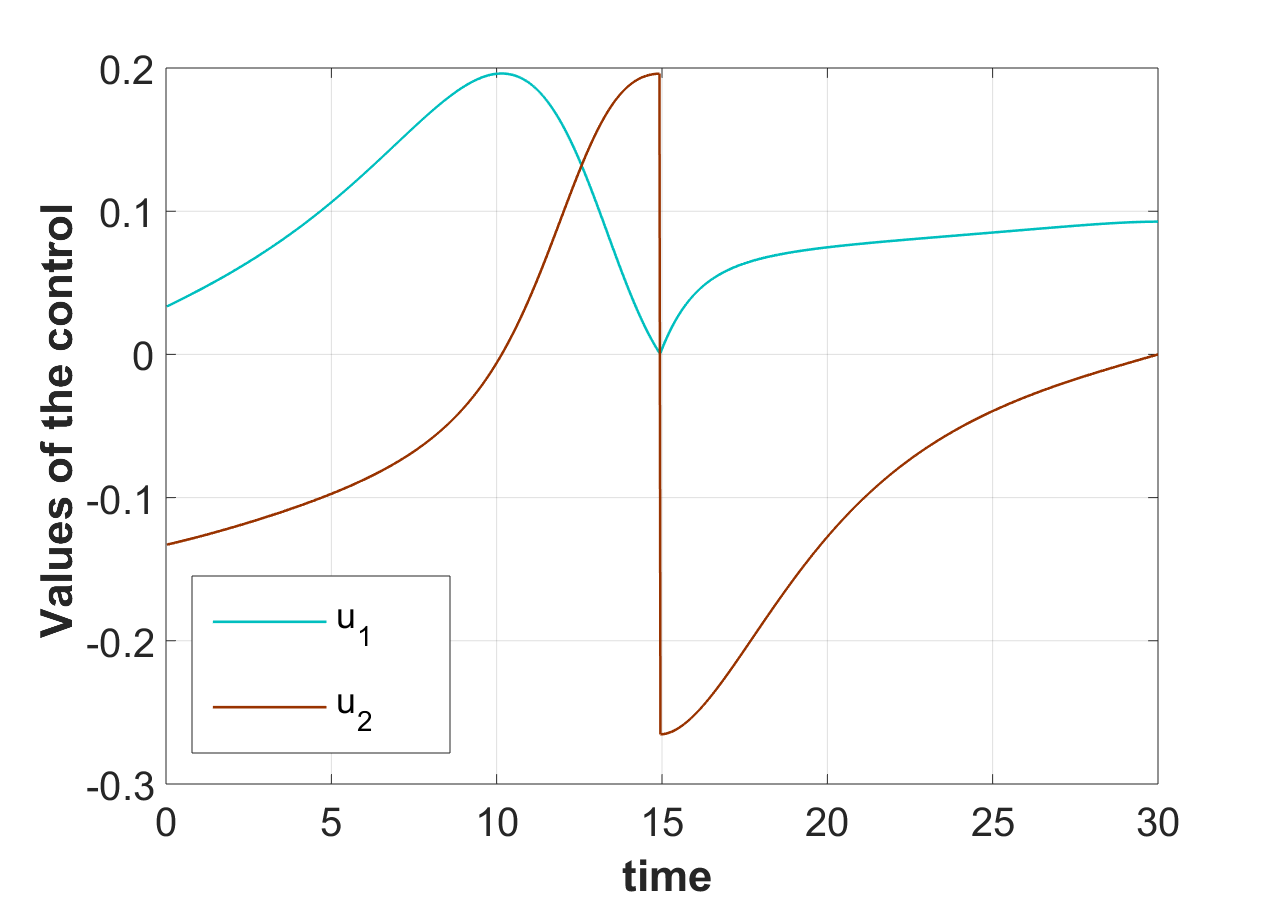}
   \captionof{figure}{Values of the bilinear control through the time, with terminal cost. \textcolor{white}{}\label{figterm2}}
 \end{minipage}
\end{minipage}\\
\hfill \\
\FloatBarrier
The optimal time for the maximum of predators is $\tau \approx 14.87$. As expected, the maximum is smaller than the one reached without terminal cost, because the control has to be activated for $t >\tau$ in order to take into account the functional $\phi_2$.

\subsubsection{The simple damped pendulum}
Consider the model of a simple pendulum, as described in Figure~\ref{figPendulum} below.
\begin{center}
 \scalebox{0.4}{
  \begin{picture}(0,0)%
  \centering
  \includegraphics[trim = 0cm 0cm 0cm 0cm, clip, scale=1.00]{./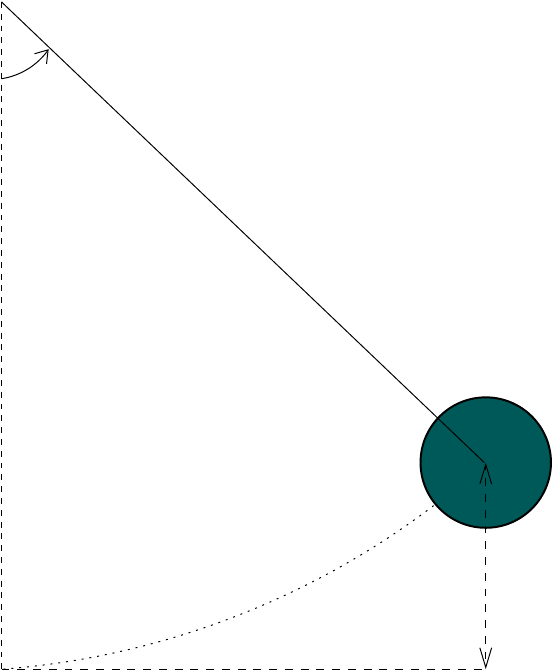}   
  \end{picture}%
  \setlength{\unitlength}{4144sp}%
  \begin{picture}(4214,5109)(5884,-8083)
  \put(9721,-7396){\makebox(0,0)[lb]{\smash{{\SetFigFont{29}{34.8}{\rmdefault}{\mddefault}{\updefault}{\color[rgb]{0,0,0}$L\sin \theta$}%
     }}}}
     \put(6086,-3906){\makebox(0,0)[lb]{\smash{{\SetFigFont{29}{34.8}{\rmdefault}{\mddefault}{\updefault}{\color[rgb]{0,0,0}$\theta$}%
        }}}}
\end{picture}%
}
\end{center}
\FloatBarrier
\vspace{-20pt}
\begin{figure}[!h]
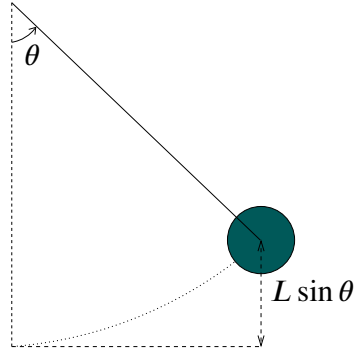

\centering  
\caption{Simple pendulum.\label{figPendulum}}
\end{figure}
\FloatBarrier
\vspace{-25pt}
The weight is assumed to satisfy the Newton's law, and so in particular the angle $\theta$ has to to satisfy the equation $\ddot{\theta} +\lambda\dot{\theta} +\mu\sin \theta = 0$, where $\lambda >0$ is a damping term, and $\mu$ is a coefficient depending on the gravity field and the length of the taut rope. the corresponding differential system is
\begin{eqnarray*}
       \left( \begin{matrix} \dot{y}_1 \\ \dot{y}_2 \end{matrix} \right) & = &
       \left( \begin{matrix} y_2 \\ -\lambda y_2 -\mu \sin y_1  \end{matrix} \right) + \left(\begin{matrix} 0 \\ u \end{matrix}\right).
\end{eqnarray*}
where $y = (y_1,y_2)^T := (\theta,\dot{\theta})^T$, and where the control $u$ represents some additional horizontal force in the Newton's law. The numerical method developed in section~\ref{secpractice} is performed, with
\begin{eqnarray*}
T = 25.0, \quad y_0 = (-1.0,0.0)^T, \quad \alpha = 10.0, \quad \lambda =0.03, \quad \mu =1.0.
\end{eqnarray*}
Here again, we do not consider any terminal cost, namely $\phi_2 \equiv 0$. The functional we maximize at some time $\tau$ is $\phi_1(y) = y_1$, namely the angle and thus the height of the weight. The results presented in Figures~\ref{figP} and~\ref{figUP} are obtained with a Crank-Nicolson time-discretization, with $N = 2500$ time steps.\\
\begin{minipage}{\linewidth}
\centering
\begin{minipage}{0.45\linewidth}
\includegraphics[trim = 0cm 0cm 0cm 1cm, clip, scale=0.25]{./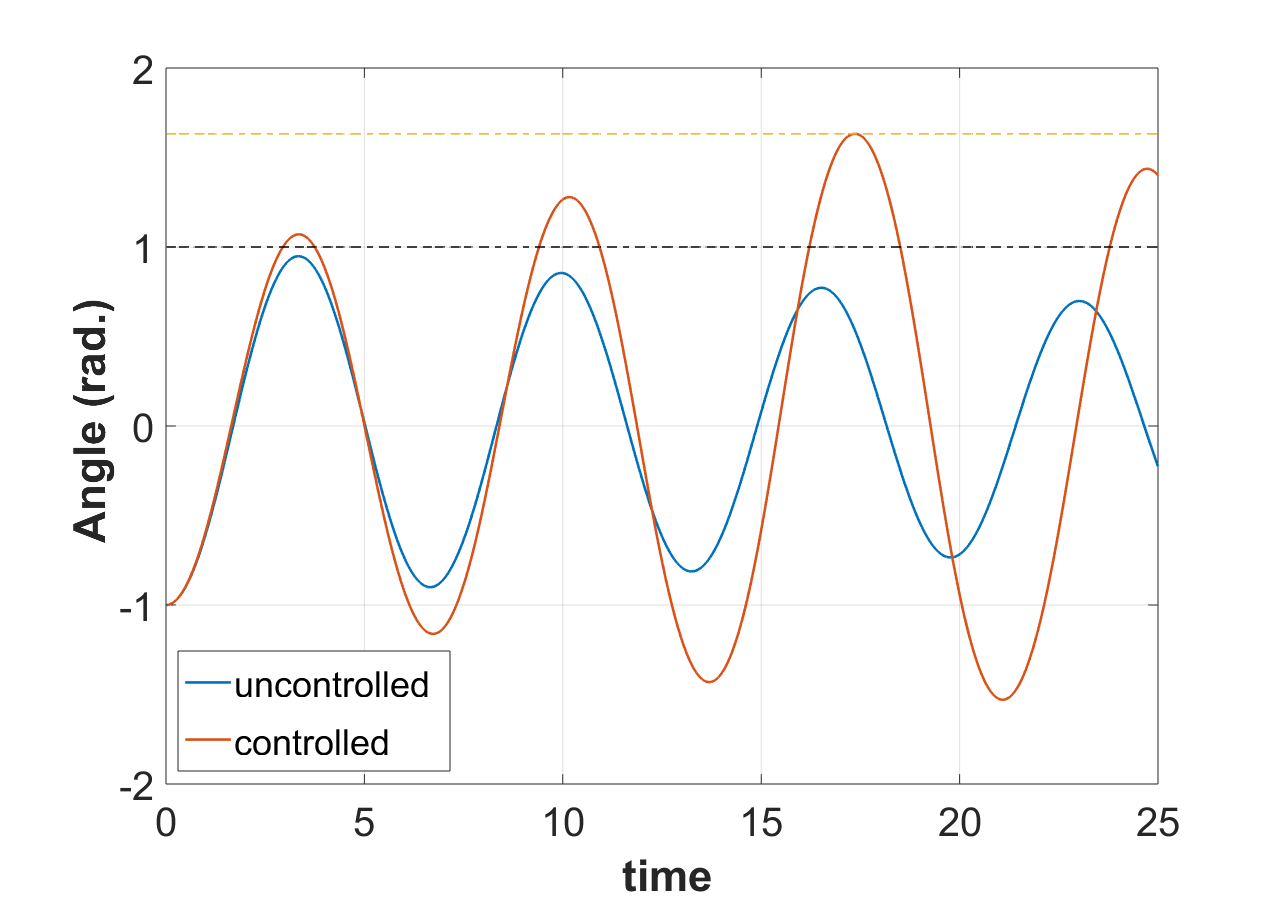}
\captionof{figure}{Comparison of the evolutions of angles for the damped simple pendulum, with and without control.\label{figP}}
       \end{minipage}
       \hspace{0.05\linewidth}
       \begin{minipage}{0.45\linewidth}
\includegraphics[trim = 0cm 0cm 0cm 1cm, clip, scale=0.25]{./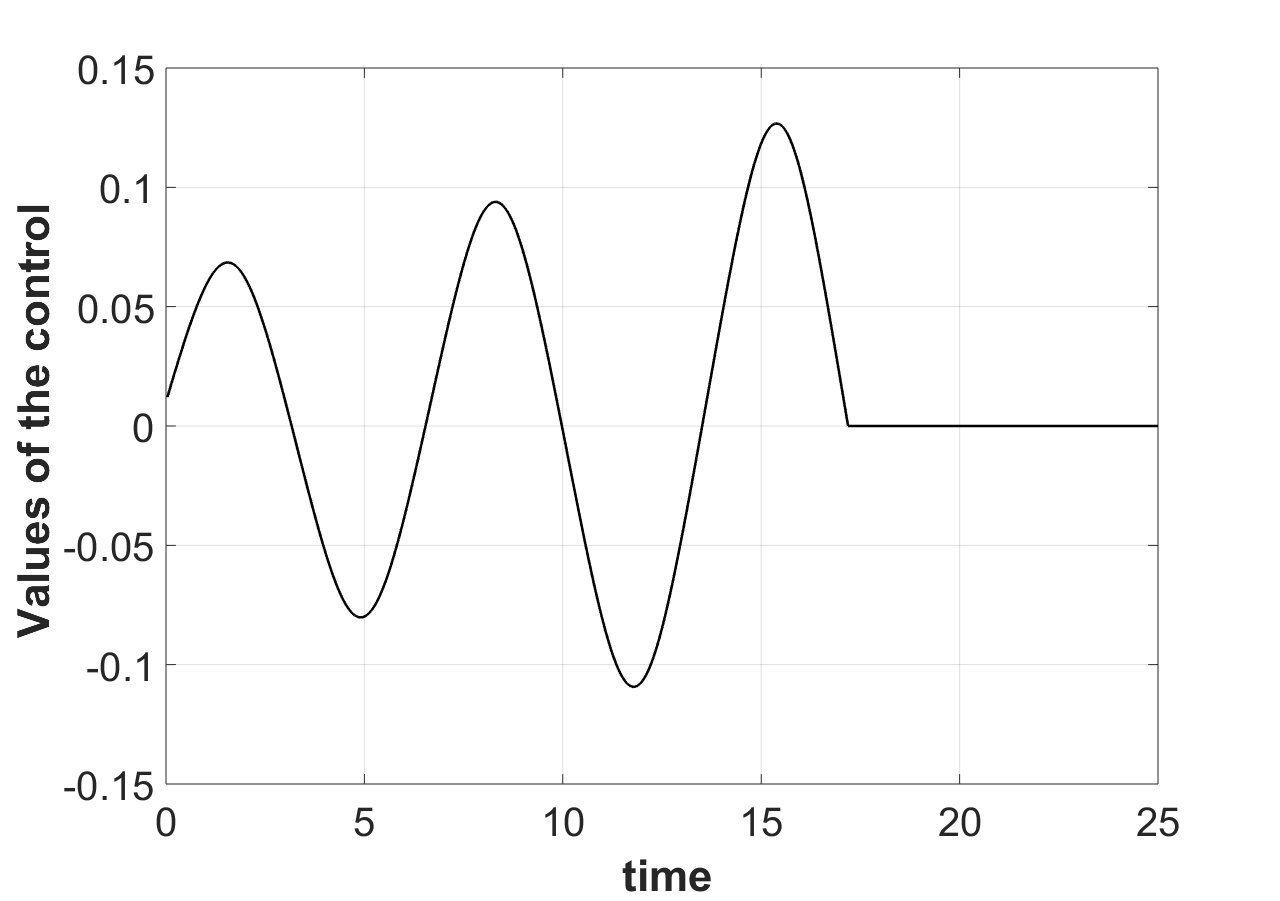}
\captionof{figure}{Values of the control through the time.\textcolor{white}{blablbalalalalalalal}
\hfill  \label{figUP}}
\end{minipage}
\end{minipage}\\
\hfill \\
\FloatBarrier
The maximum is reached at $\tau \approx 17.22$. As we can see, the control can be kept activated on several pseudo-periods, and it can compensate the damping effect. But numerically, the difficulty lies in considering an appropriate initialization for $\tau$ (which is not $T/2$ in that case). Indeed, our numerical approach enables us only to get a critical point, and thus without a good initialization, one might find  some local extremum, instead of the desired maximum. \textcolor{black}{A solution consists in modifying this initialization {\it a posteriori}, if the critical point first found is undesired. An {\it a priori} knowledge of a time interval in which $\tau$ would lie may be required.}
      
      \subsubsection{A partial differential equation} \label{secnumPDE}
      Consider the following Burgers-type system
      \begin{eqnarray*}
       \left\{ \begin{array} {ll}
        \dot{y}  = \nu y_{xx} - \beta yy_x + \mathbf{1}_{\omega} u, & (x,t) \in (0,1) \times (0,T), \\
        y(0,t) = y(1,t) = 0, & t \in (0,T), \\
        y(x,0) = 10.(1 - e^{-(1-x)})(e^{-(1-x)} - e^{-1}), & x \in (0,1),
       \end{array} \right.
      \end{eqnarray*}
      where $\nu$ and $\beta$ are positive constant. Here the unknown $y$ is considered in the space $W(0,T; \H^1_0(0,1))$ (defined in section~\ref{secset}), corresponding to the Gelfand triplet $\H^1_0(0,1) \hookrightarrow \L^2(0,1) \hookrightarrow \H^{-1}(0,1)$. Recall that $\H^1(0,1) \hookrightarrow \mathcal{C}([0,1])$. The question of well-posedness for such a system is addressed in~\cite{Volkwein}. The control $u$ is considered in $\L^2(\omega)$, with $\omega = [0.00 ; 0.25]$, and the cost term is given by $\ell(y,u) = -\frac{\alpha}{2} \|u\|^2_{\L^2(\omega)}$. We want to maximize, at some optimal time $\tau \in (0,T)$, the following quantity
      \begin{eqnarray*}
       \phi_1(y(\cdot,\tau)) & = & \frac{1}{2}\int_D |y(x,\tau)|^2\d x
      \end{eqnarray*}
      with $D = [0.25 ; 0.30]$, and without considering any terminal cost: $\phi_2 \equiv 0$. \textcolor{black}{Recall that the Burgers equation is the one-dimensional version of the Navier-Stokes equations. The state $y$ plays the role of a velocity, and thus the quantity we maximize in $\phi_1$ corresponds to a kinetic energy.} The space discretization is done with finite P1-elements, with $n = 101$ degrees of freedom for the state variable, and hence $m=26$ unknowns for the control variable. The time discretization is done with a Crank-Nicolson scheme, with $N = 1000$ time steps. We consider the following parameters
      \begin{eqnarray*}
       T = 10.0, \quad \alpha = 2.10^{-9}, \quad \nu = 2.10^{-4}, \quad \beta = 0.05.
      \end{eqnarray*}
      The evolutions of the state and the control through the time are represented in Figure~\ref{figBurgers} below.\\
\begin{minipage}{\linewidth}
\centering
\begin{minipage}{0.2175\linewidth}
\includegraphics[trim = 0cm 0cm 0cm 1cm, clip, scale=0.15]{./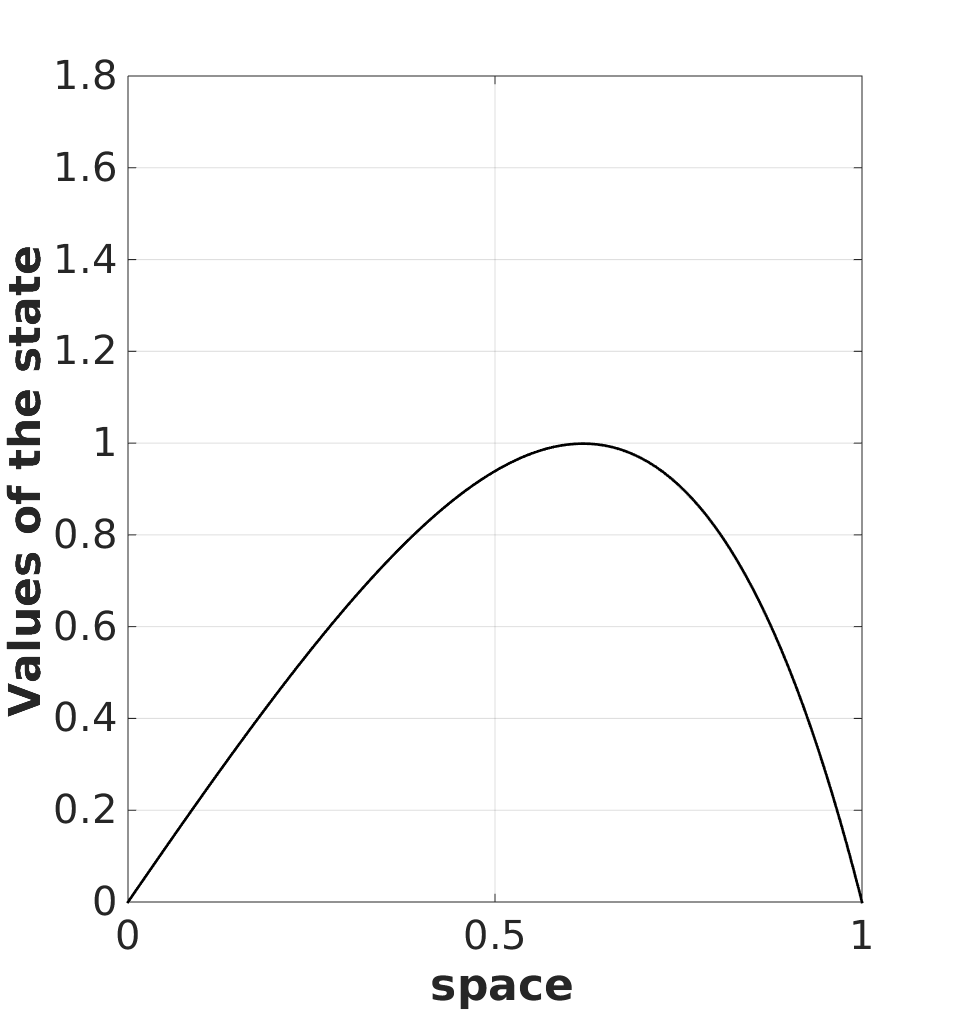} \\
\includegraphics[trim = 0cm 0cm 0cm 0cm, clip, scale=0.15]{./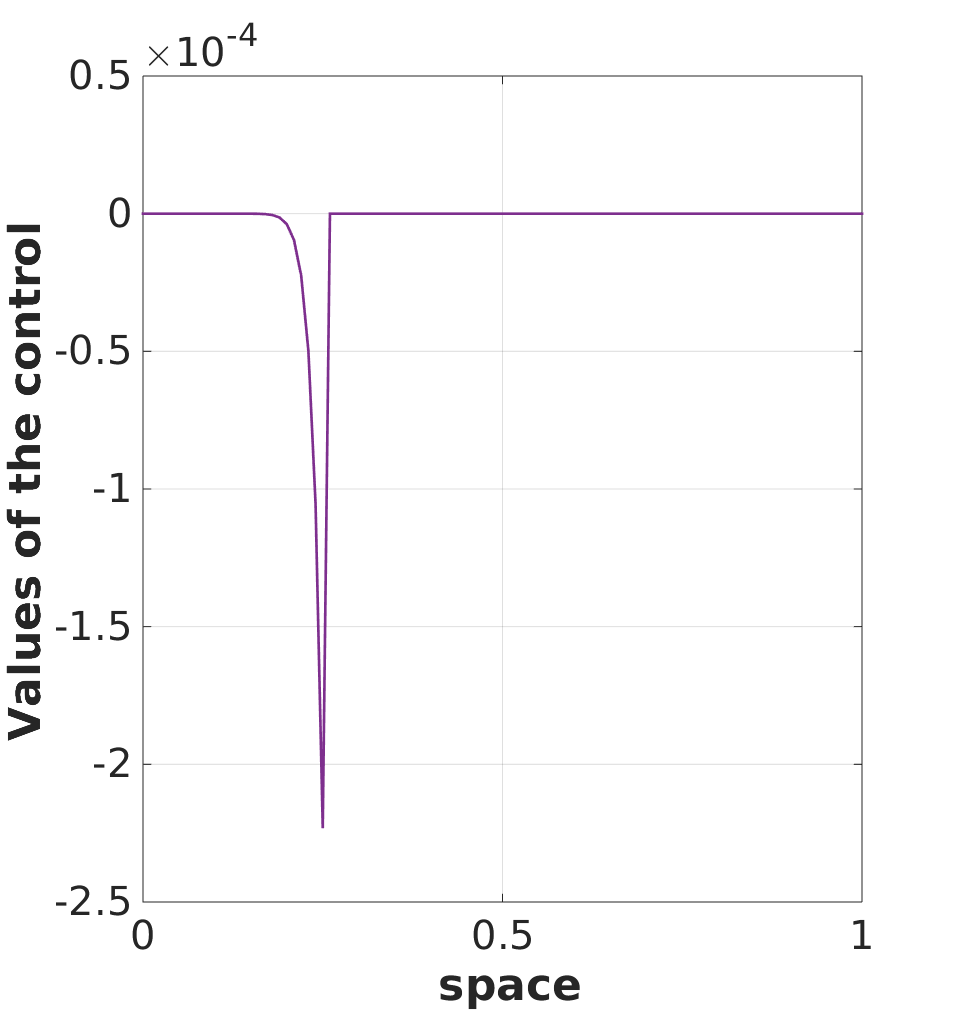}
\begin{center} $ t = 0.0777 $ \end{center}
\end{minipage}
\hspace{0.03\linewidth}
\begin{minipage}{0.2175\linewidth}
\includegraphics[trim = 0cm 0cm 0cm 1cm, clip, scale=0.15]{./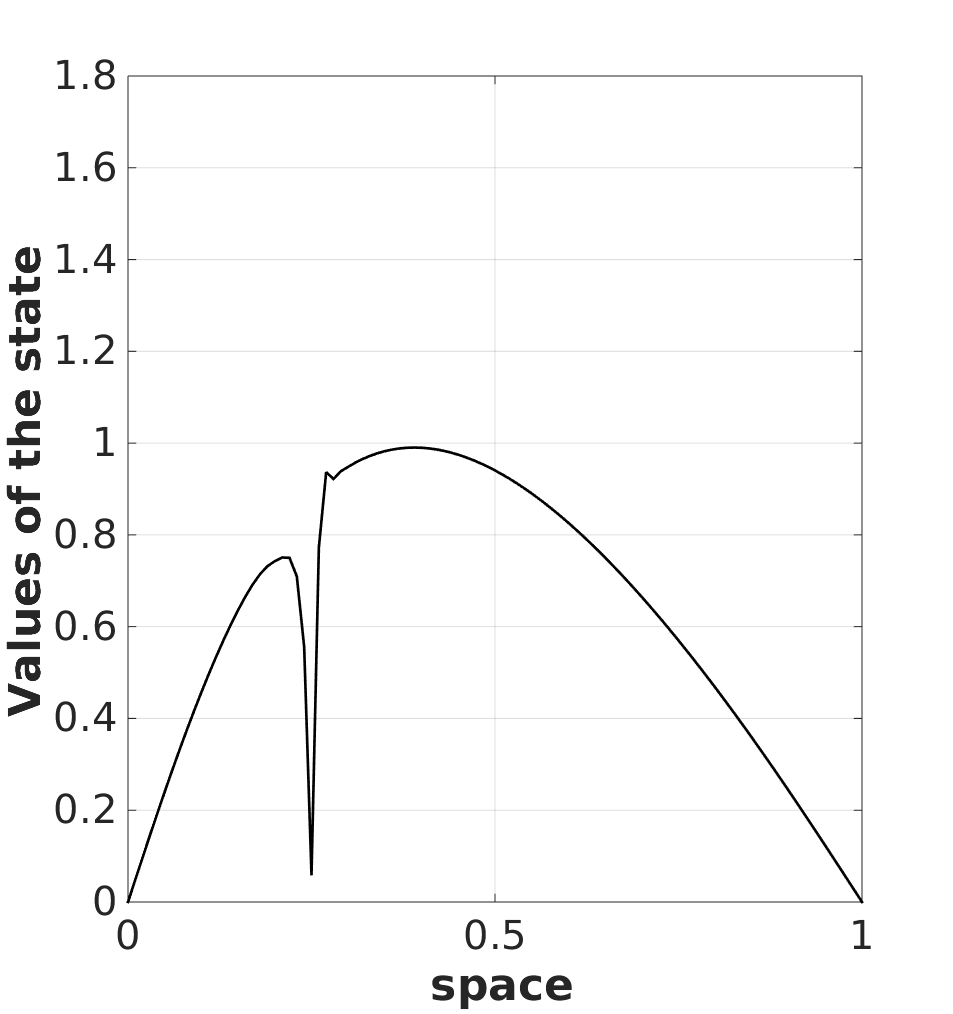} \\
\includegraphics[trim = 0cm 0cm 0cm 1cm, clip, scale=0.15]{./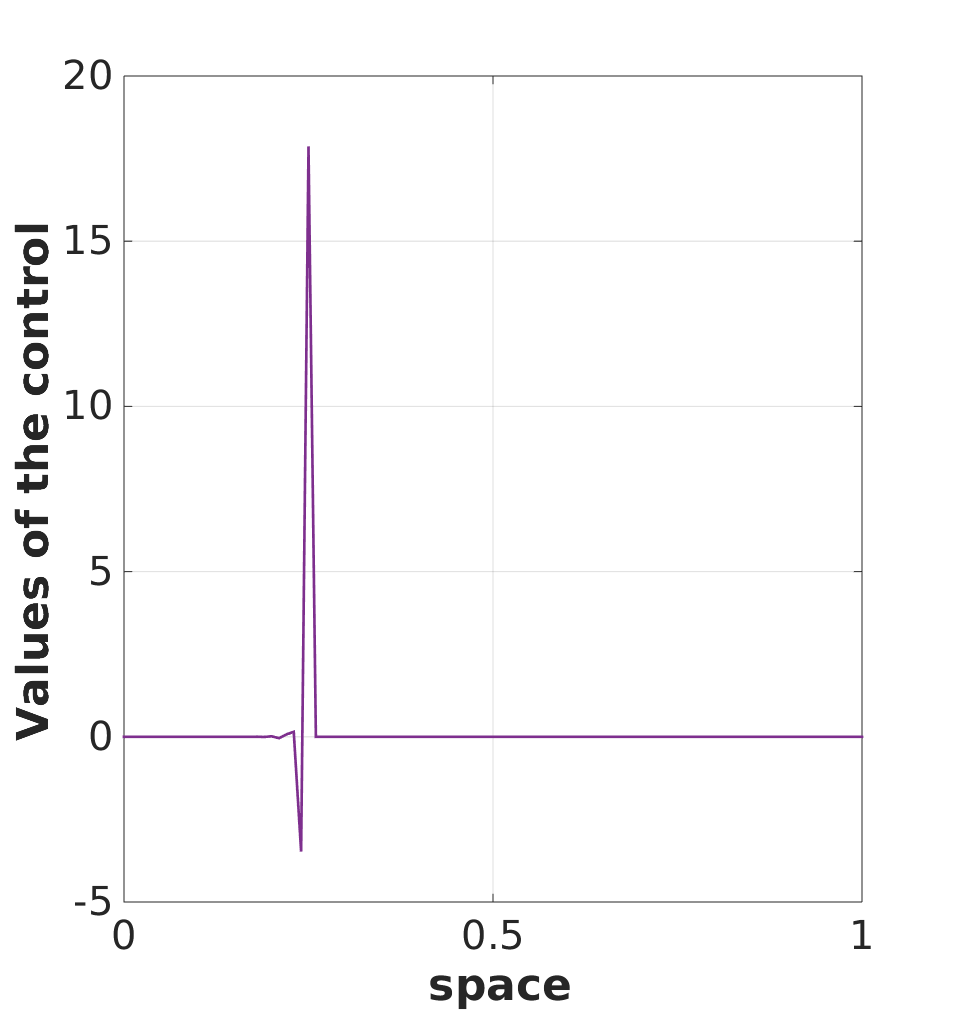}
\begin{center} $ t = 4.7111 $ \end{center}
\end{minipage}
\hspace{0.03\linewidth}
\begin{minipage}{0.2175\linewidth}
\includegraphics[trim = 0cm 0cm 0cm 1cm, clip, scale=0.15]{./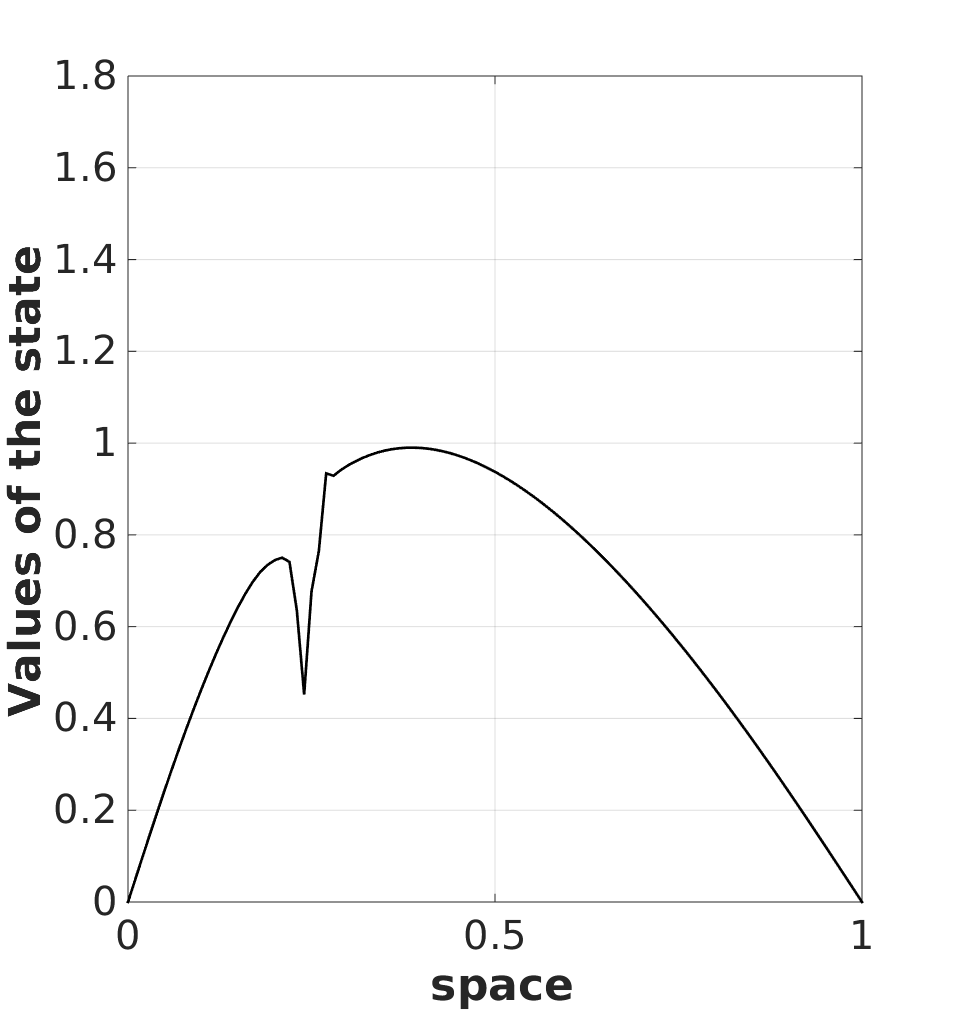} \\
\includegraphics[trim = 0cm 0cm 0cm 1cm, clip, scale=0.15]{./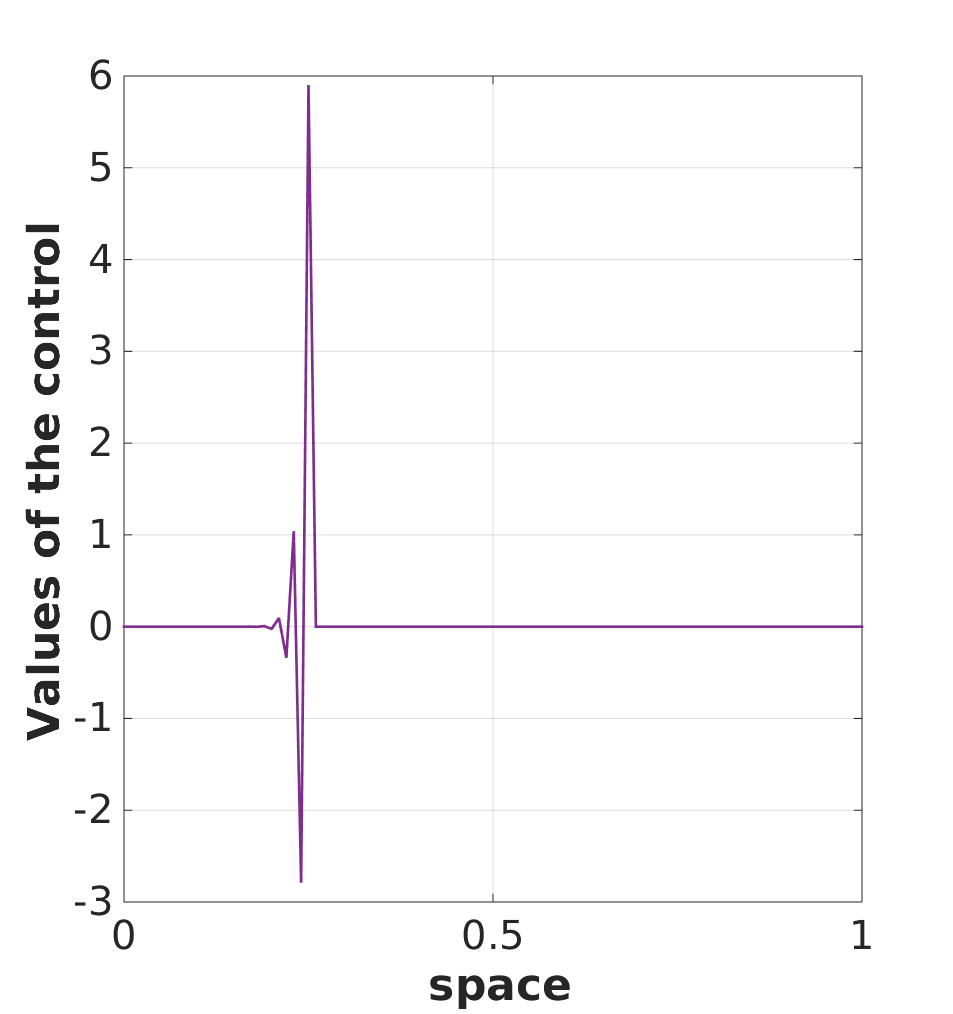}
\begin{center} $ t = 4.7839 $ \end{center}
\end{minipage}
\hspace{0.03\linewidth}
\begin{minipage}{0.2175\linewidth}
\includegraphics[trim = 0cm 0cm 0cm 1cm, clip, scale=0.15]{./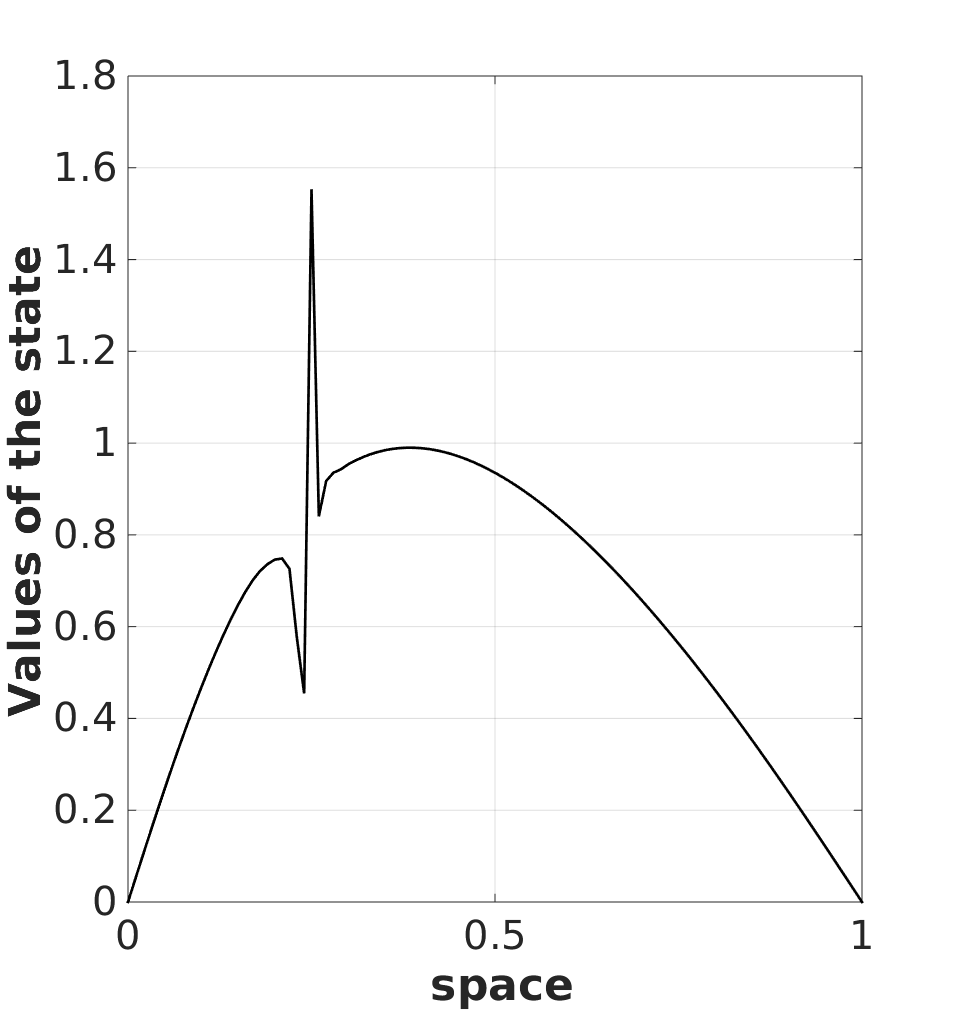} \\
\includegraphics[trim = 0cm 0cm 0cm 1cm, clip, scale=0.15]{./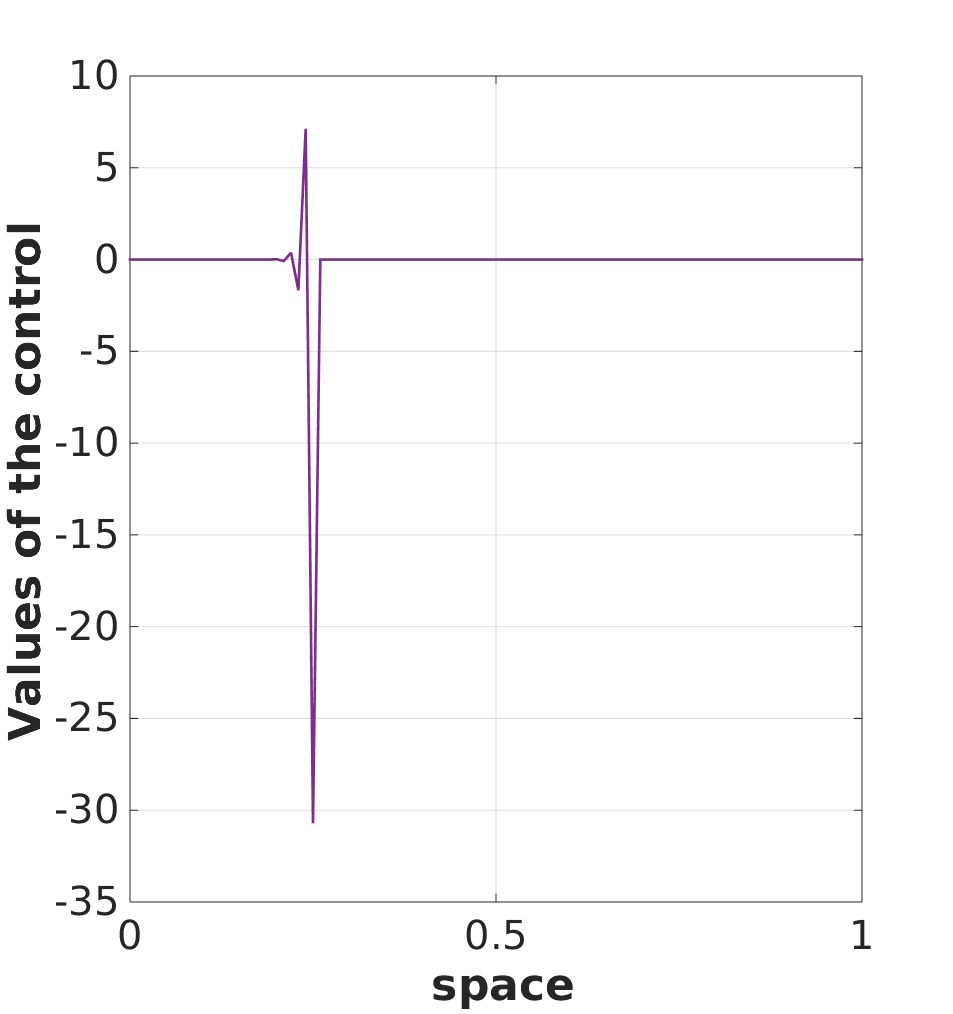}
\begin{center} $ t = 4.8325 $ \end{center}
\end{minipage}  
\\ 
\begin{minipage}{0.2175\linewidth}
\includegraphics[trim = 0cm 0cm 0cm 1cm, clip, scale=0.15]{./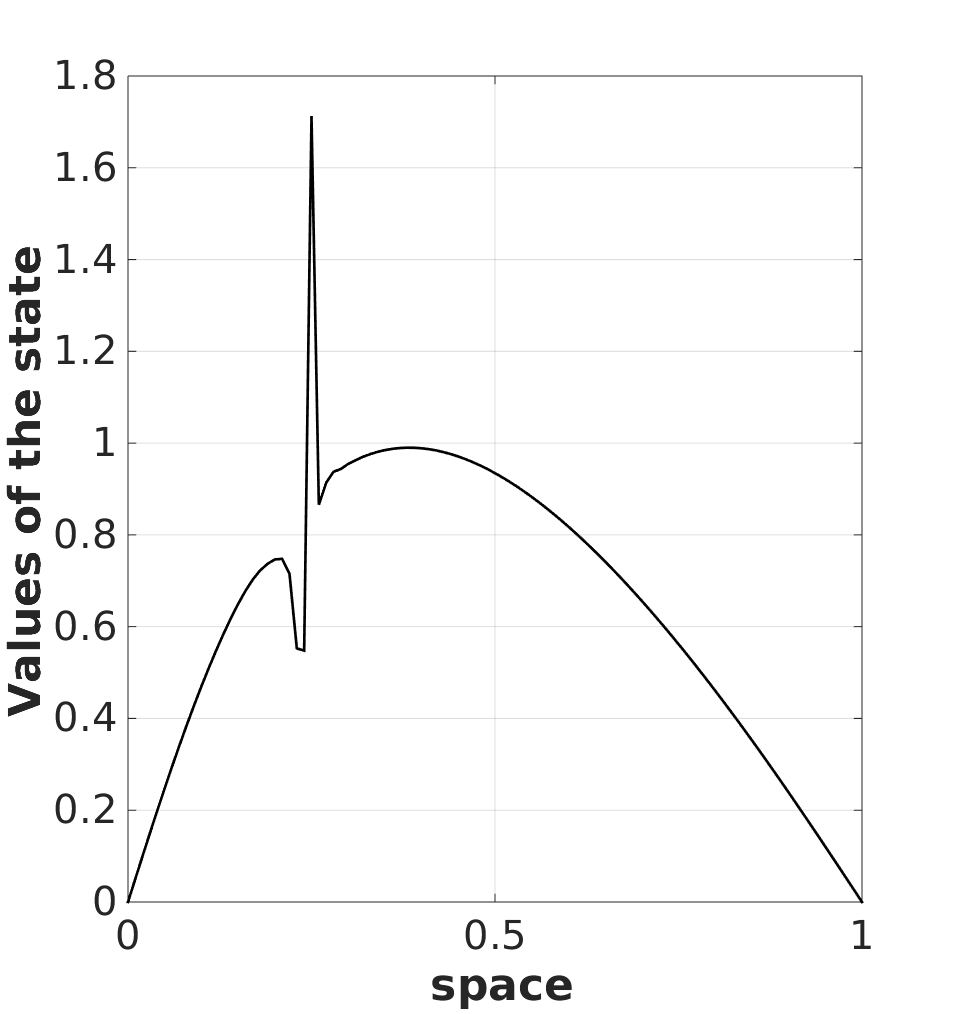} \\
\includegraphics[trim = 0cm 0cm 0cm 1cm, clip, scale=0.15]{./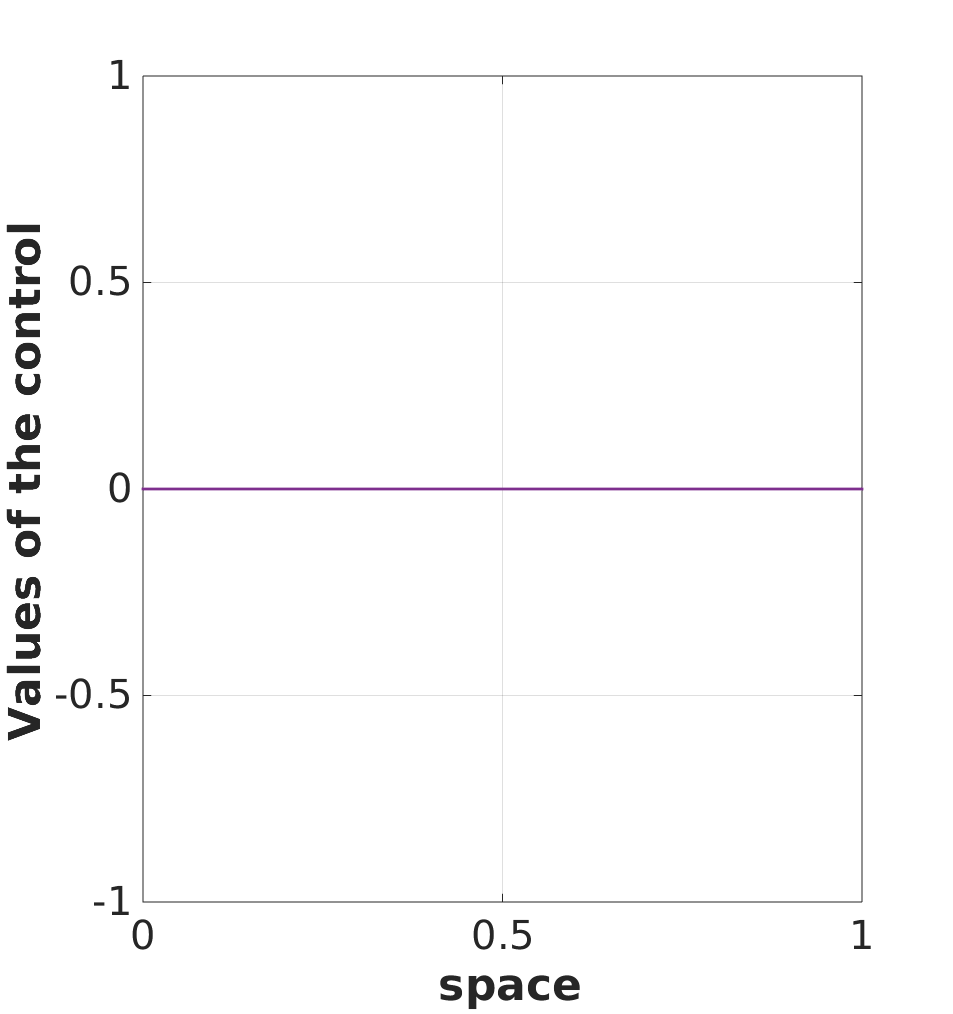}
\begin{center} $ t = 4.8519  $ \end{center}
\end{minipage}
\hspace{0.03\linewidth}
\begin{minipage}{0.2175\linewidth}
\includegraphics[trim = 0cm 0cm 0cm 1cm, clip, scale=0.15]{./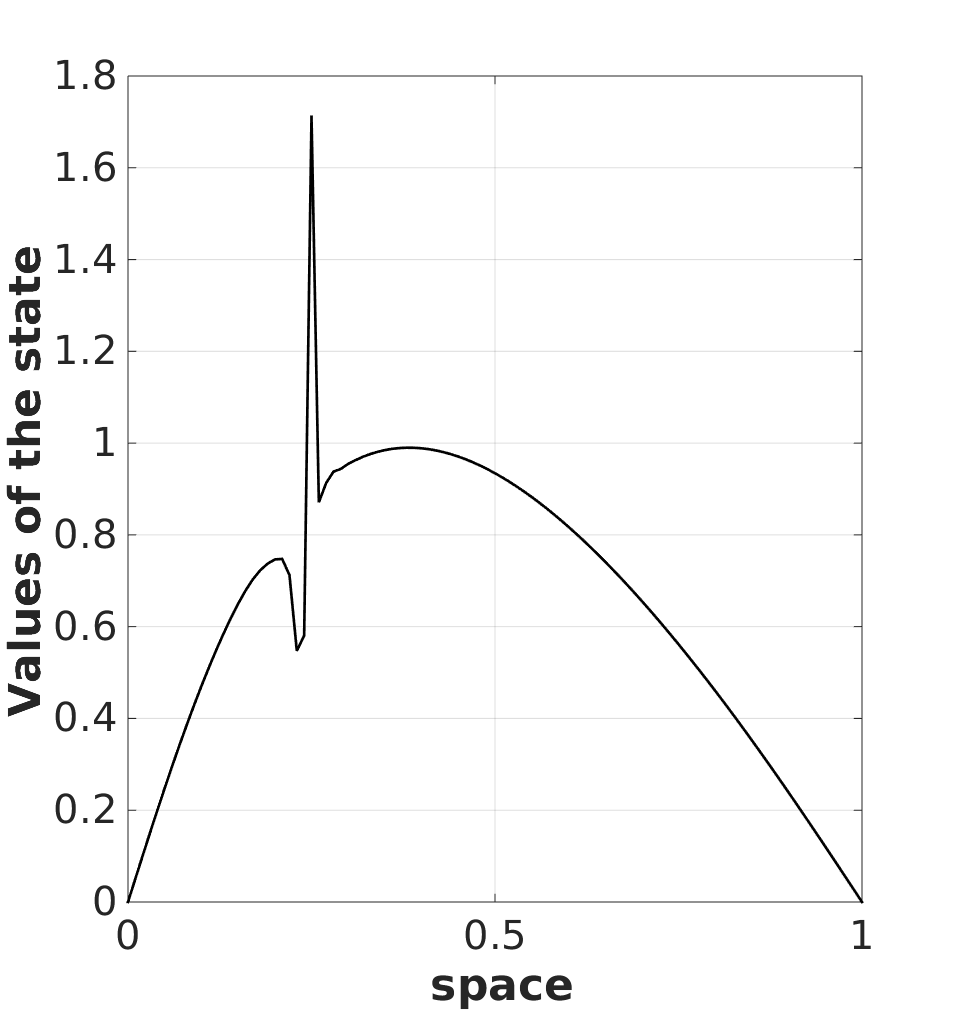} \\
\includegraphics[trim = 0cm 0cm 0cm 1cm, clip, scale=0.15]{./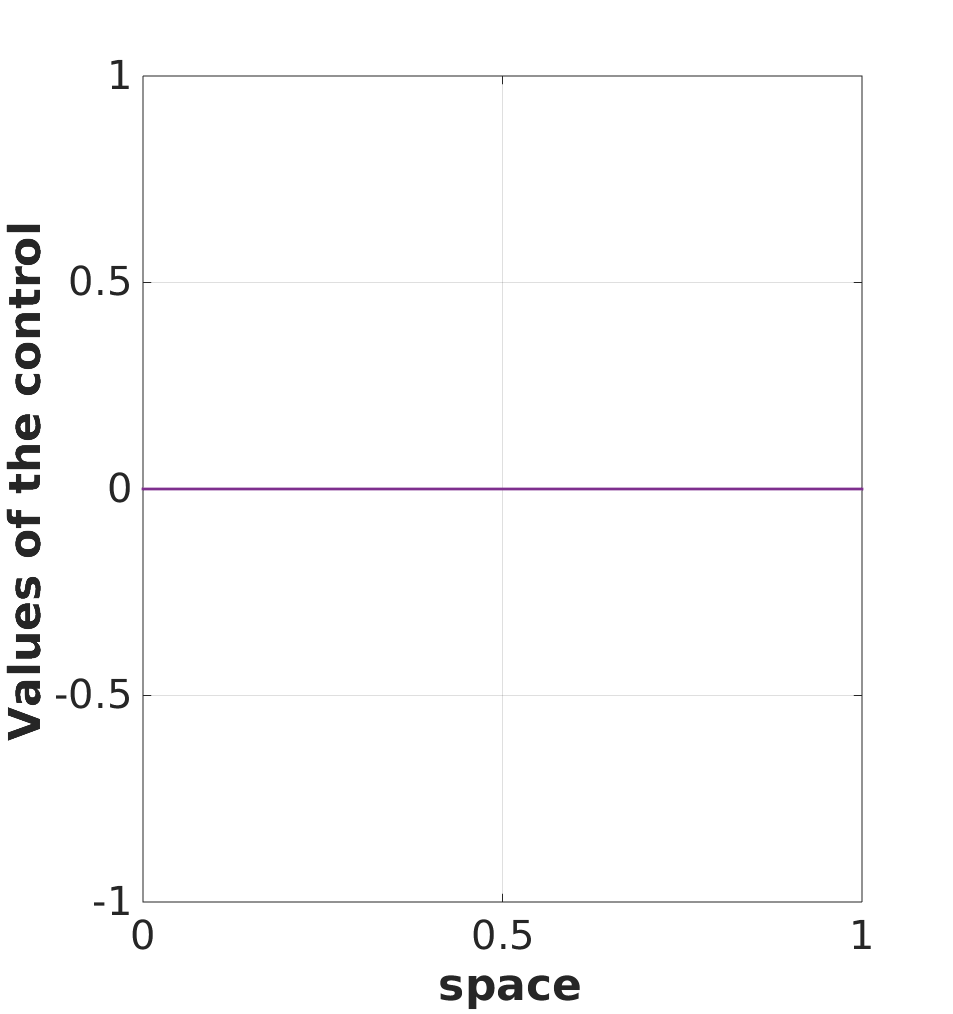}
\begin{center} $ t =  4.8568 \approx \tau $ \end{center}
\end{minipage}
\hspace{0.03\linewidth}
\begin{minipage}{0.2175\linewidth}
\includegraphics[trim = 0cm 0cm 0cm 1cm, clip, scale=0.15]{./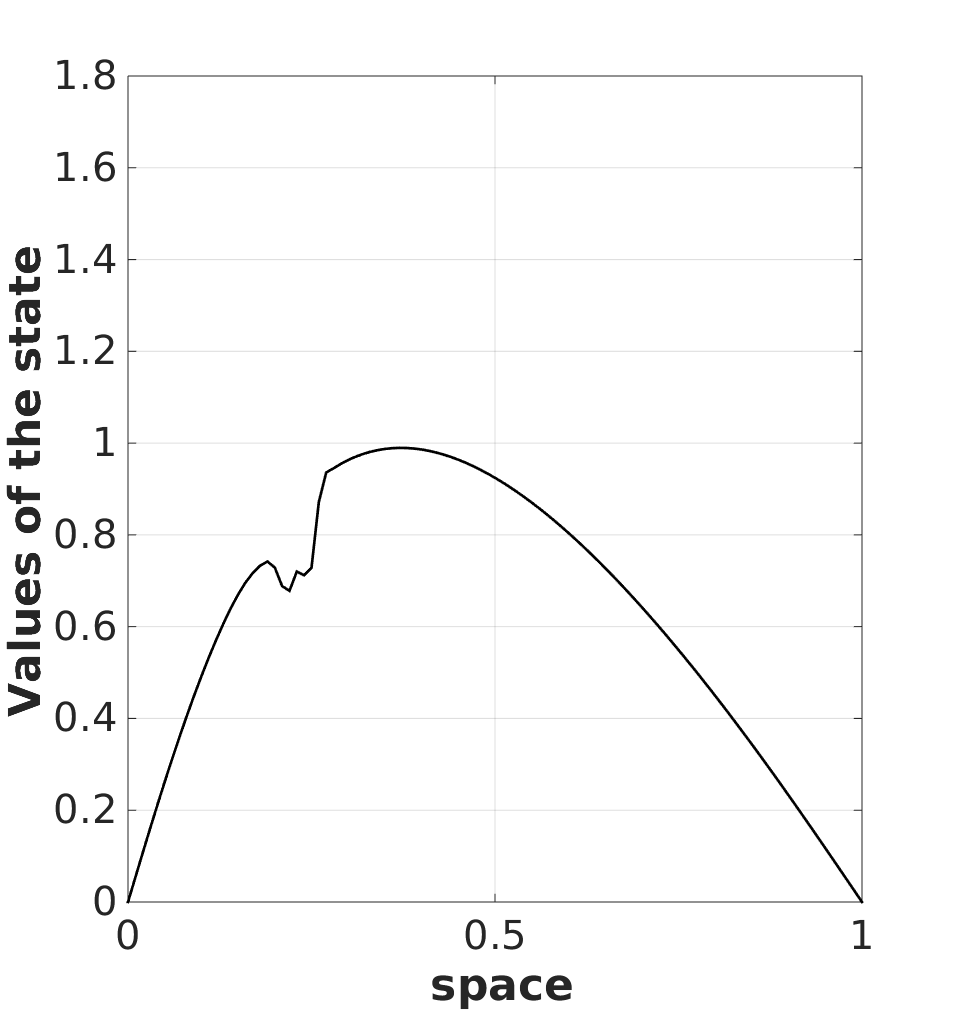} \\
\includegraphics[trim = 0cm 0cm 0cm 1cm, clip, scale=0.15]{./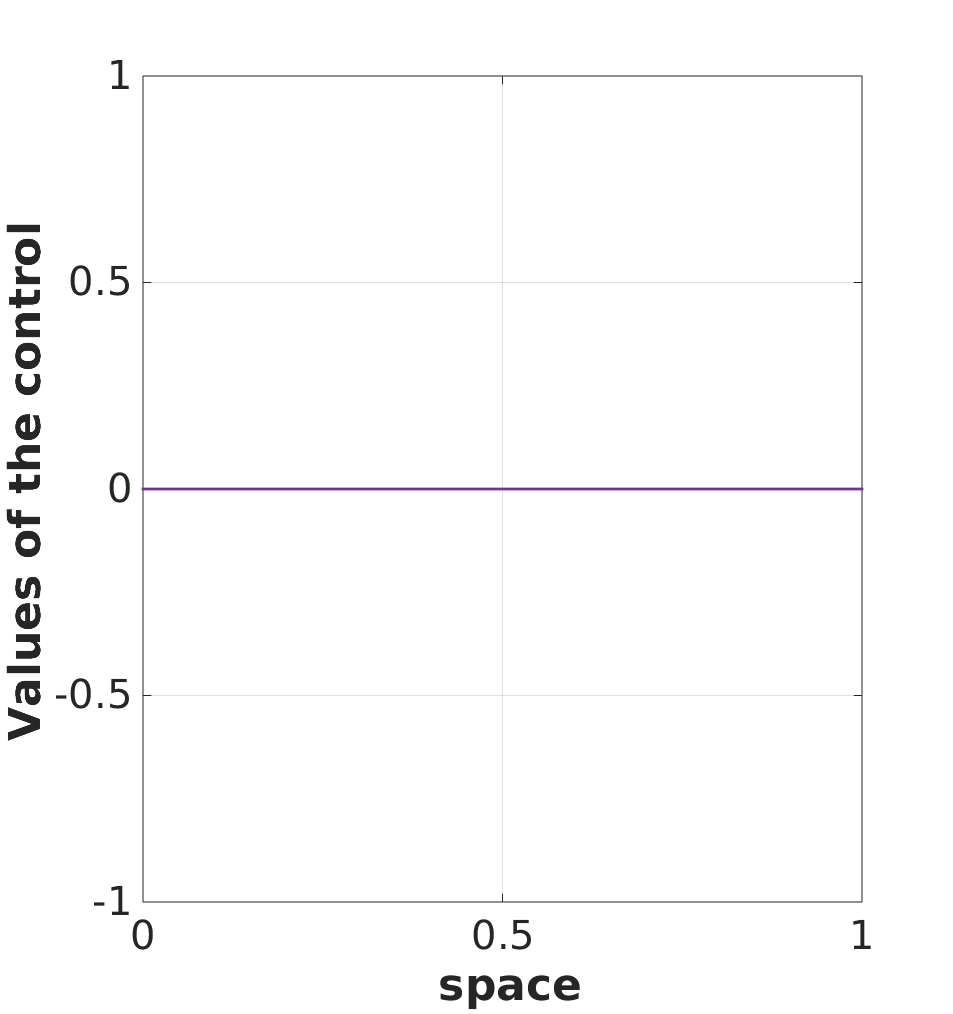}
\begin{center} $ t = 5.0111 $ \end{center}
\end{minipage}
\hspace{0.03\linewidth}
\begin{minipage}{0.2175\linewidth}
\includegraphics[trim = 0cm 0cm 0cm 1cm, clip, scale=0.15]{./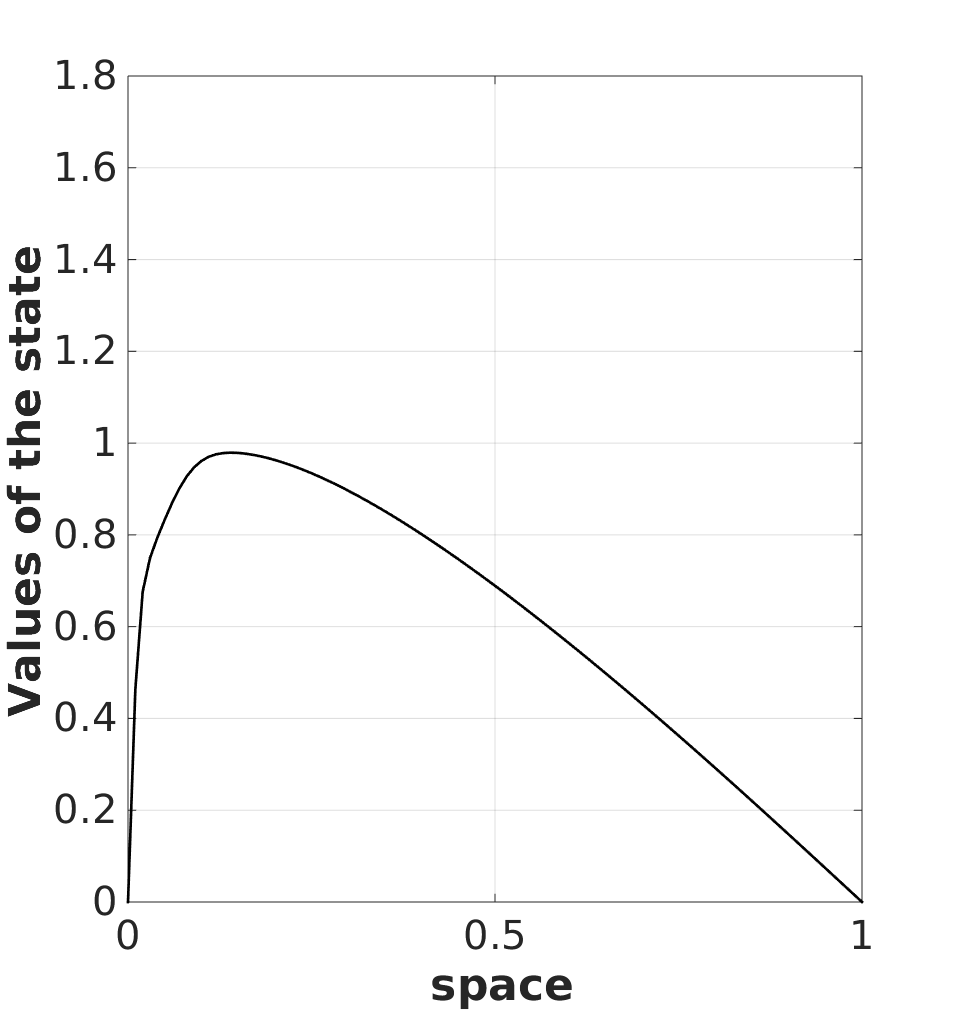} \\
\includegraphics[trim = 0cm 0cm 0cm 1cm, clip, scale=0.15]{./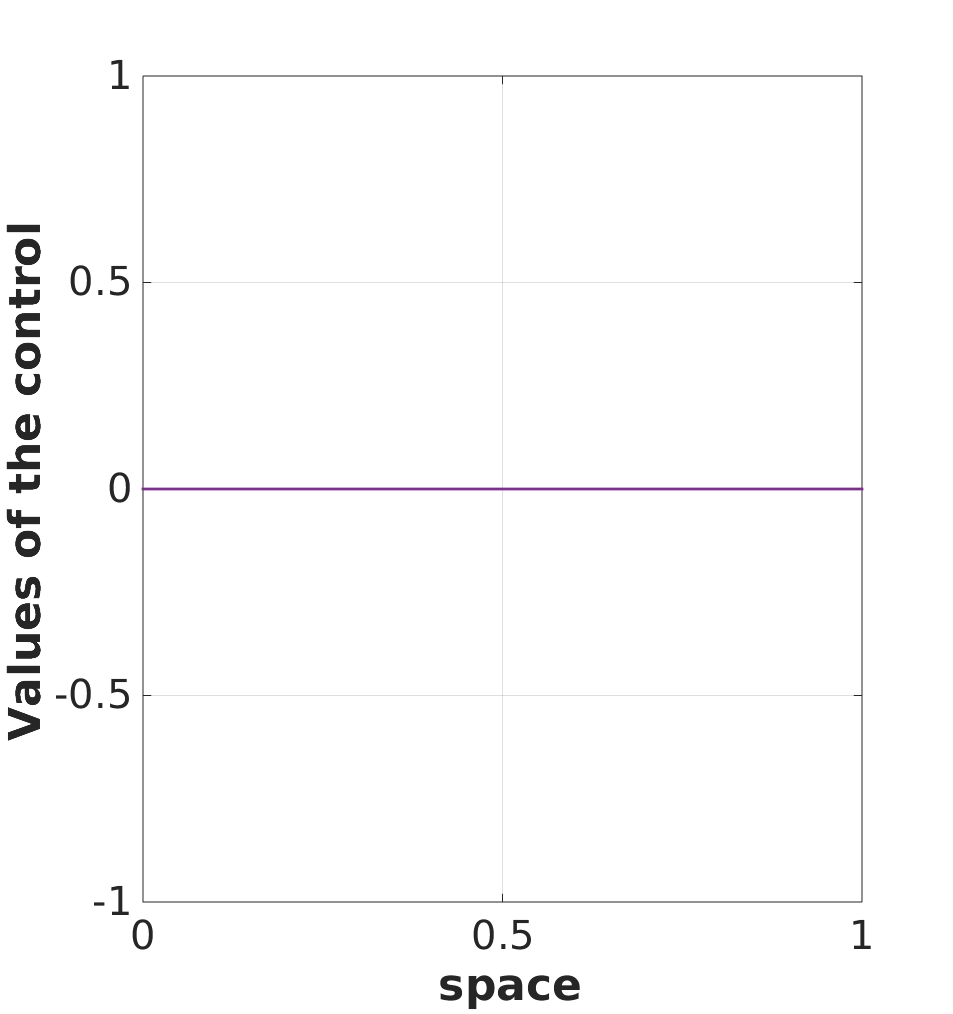}
\begin{center} $ t = 10.0000 $ \end{center}
\end{minipage}  
\end{minipage}
\hfill \\ \hfill \\
\begin{minipage}{\linewidth}
\captionof{figure}{Values of the state and the control, for different values of the time. \textcolor{white}{}\label{figBurgers}}
\end{minipage}\\
\hfill \\
\FloatBarrier

The maximum is reached for $\tau \approx 4.86$. In view of the profile of the initial condition, without control the solution is transported to the left part of the domain. The simulation shows that the control seems to wait for the so transported energy, before being mainly active during a small period before $t = \tau$, operating a bumping effect. Its influence on the state leads to transport some energy from $\omega$ to $D$. For $t > \tau$, the energy obtained on $D$ is diffused into the profile (the viscosity $\nu$ is chosen here very small, so that the diffusion due to the term $\nu y_{xx}$ is almost not noticed). Note that some delay is encoded into the model, that is to say when the maximum is reached for the state, at time $\tau$, and even a little bit before this moment, the control is no longer active.

\section*{Acknowledgments}
The authors gratefully acknowledge support by the Austrian Science Fund (FWF) special
research grant SFB-F32 "Mathematical Optimization and Applications in Biomedical
Sciences", and by the ERC advanced grant 668998 (OCLOC) under the EU's H2020
research program.

\bibliography{Zamm-201600199}%

\end{document}